\documentclass{ifcolog}
\usepackage{amsmath,amsthm,amssymb,amsfonts}
\usepackage{hyperref}

\usepackage{tikz-cd}   
  \usetikzlibrary{arrows,arrows.meta}
  \tikzcdset{arrow style=tikz, diagrams={>=stealth'}}
\usepackage{xcolor}    

\newcommand{\bito}{\mathrel{%
  \hspace{.1ex}
  \begin{tikzpicture}[baseline=-.57ex, line width=.125ex]
    \draw[-] (.2ex,0) --(1.5ex,0); 
    \draw[-, line width=.01ex, fill=black]
             (1.35ex,0) -- (1.84ex, .48ex)
                       -- (1.91ex ,.418ex)
                       -- (1.55ex,   0ex)
                       -- (1.91ex ,-.418ex)
                       -- (1.84ex,-.48ex)
                       -- (1.35ex,0ex);
  \end{tikzpicture}
\hspace{.1ex}}}

\newcommand{\bDiamond}{\hspace{.17ex}\text{%
  \tikz[scale=1.00, baseline=-.61ex, rounded corners=.0ex, line
width=.001ex]
    {\draw[-,fill=black] (0,-.76ex) -- (-.7ex,0) -- (0,.76ex) --
(.7ex,0);}}\kern.17ex}

\newcommand{\mo}[1]{\mathfrak{#1}}    

\newcommand{\biLan}{\mathbf{BI}} 

\newcommand{\Prop}{\textup{Prop}}   

\newcommand{\biInt}{\mathbf{BiInt}}  
\newcommand{\dom}[1]{\mbox{dom}(#1)}

\newcommand{\biItp}{\mathsf{bitp}}    
\newcommand{\nbiItp}{\mathsf{nbitp}}  

\newcommand{\uparr}{{\uparrow}}
\newcommand{\downarr}{{\downarrow}}

\renewcommand{\phi}{\varphi}
\renewcommand{\iff}{\quad\text{iff}\quad}

\newcommand{\sledom}{\Relbar\joinrel\mathrel{|}} 

\newcommand{\dbisim}{\rightharpoonup} 

\newtheorem{definition}{Definition}
\newtheorem{theorem}{Theorem}

\newtheorem{corollary}[theorem]{Corollary}
\newtheorem{lemma}[theorem]{Lemma}
\newtheorem{proposition}[theorem]{Proposition}

\definecolor{newcol1}{rgb}{1.0, 0.49, 0.0}
\definecolor{newcol0}{rgb}{0.5, 0.49, 0.0}

\title{Definable Classes of Models  and Frames \\ in Bi-intuitionistic Logic}
\titlerunning{Definable Classes of Models in Bi-Int}
\titlethanks{We are grateful to Jim de Groot with whom we discussed several of
  the ideas that appear here. We are also grateful to Katalin Bimbo for many
  editing suggestions that improved the presentation greatly.}

\addauthor{Guillermo Badia\thanks{Thanks to ...}}{University of Queensland,  Australia}
\addauthor{Tomasz Kowalski\thanks{Support from ERC HORIZON2020-MSCA-RISE project
    no.~101007627 (MOSAIC) is gratefully acknowledged.}}{Jagiellonian University, Poland}
\addauthor{Grigory Olkhovikov\thanks{Thanks to ...}}{Ruhr University Bochum,  Germany}    

\authorrunning{Badia, Kowalski, Olkhovikov}

\date{}                                           

\begin{document}

\maketitle

\begin{abstract}
The question of the expressive power of a given logical language with Kripke
relational semantics has at least two dimensions: (1) what the language can  say
about frames, and (2) what it can say about models. The  Goldblatt--Thomason
theorem provides a model-theoretic characterisation of modal axiomatisability
for elementary classes of \emph{frames} in terms of closure under taking
generated subframes, disjoint unions, bounded morphic images, and reflection of
ultrafilter extensions.  Goldblatt  also provides a similar  characterisation
for axiomatisability in intuitionistic logic of classes of \emph{models} rather
than frames. In this article we provide analogous results for bi-intuitionistic
logic, a natural expressive extension of intuitionistic logic obtained by adding
a binary connective dual to the intuitionistic implication,
introduced in the 1970s independently by Dieter Klemke  and Cecylia
Rauszer.
Together with previous
results, such as a van Benthem bisimulation characterisation theorem  and a
Lindstr\"om theorem, this provides a complete picture of the expressive power of
propositional bi-intuitionistic logic. 

\end{abstract}

\section{Introduction}\label{sec:intro}

 The result of adding the algebraic dual operation of    intuitionistic
 implication $\rightarrow$ (known as co-implication and denoted here by $\bito$)
 to intuitionistic logic is known as Heyting-Brouwer or bi-intuitionistic logic.   It is a natural model-theoretic extension of the language of intuitionistic logic in the same sense as adding a backward looking $\Diamond^{-1}$ to  modal logic with  the primitive modality $\Box$ is.  Indeed, there is a strong analogy in a  precise sense between temporal logic where we have both backward and forward looking operators and bi-intuitionistic logic (cf. \cite{w}).  In the 1970s, C. Rauszer started an intense study  of various technical aspects of both propositional and predicate bi-intuitionistic logic in a series of interesting articles spanning over a decade \cite{ra1, ra2, ra3, ra4, ra5, ra6}. 
This work has been picked up in  recent years by a number of scholars  (e.g
~\cite{badia, ba2, olk,  go, gore2, gr, ko,  pi, cro}) and results on all sorts
of proof-theoretic and model-theoretic properties of these systems have been
produced. Historically, before Rauszer, at least one place where
bi-intuitionistic logic was inadvertently introduced and studied is~\cite{klem},
but since the article was in German and the main topic was not bi-intuitionistic
logic, it did not attract much attention from the community of researchers in
the area until recently.

The present contribution is focused on completing the picture of what exactly can be expressed model-theoretically in the language of propositional bi-intuitionistic logic. In~\cite{badia}, directed bisimulations were isolated as the correct model-theoretic relations to distinguish the standard translations of bi-intuitionistic formulas into first-order logic from other first-order formulas  by means of a van Benthem characterization theorem. Following this work, in~\cite{olk} these relations, in conjunction with other natural model-theoretic properties, were used to characterize the expressive power of bi-intuitionistic logic among all its possible expressive extensions through a Lindstr\"om-style theorem. 
Two interesting results  that serve as inspiration for the work  in the current paper are the main theorems from~\cite{gold} and~\cite{gold2}. On the one hand, \cite{gold} contains the so called Goldblatt-Thomason theorem for modal logic which provides a model-theoretic characterization of the classes of frames that can be axiomatised by a modal theory among the classes that are already axiomatisable in first-order logic.
On the other hand, in~\cite{gold2}, Goldblatt provides a model-theoretic characterization of the classes of intuitionistic Kripke models axiomatisable in propositional intuitionistic logic. Our goal in the present work is to produce analogues of the  theorems in~\cite{gold} and~\cite{gold2} for bi-intuitionistic logic.

The article is arranged as follows. In Section~\ref{sec:prelim}
, we give the basic definitions of the syntax and semantics of bi-intuitionistic logic, some algebraic background related to double-Heyting algebras, and the standard translation of bi-intuitionistic logic into first-order logic. In Section~\ref{sec:mod-and-frm}, we introduce the model-theoretic constructions and relations between structures that we will use in our main theorems. These include notions of countably saturated structures, bounded morphisms, subframes (and submodels), disjoint unions, directed bisimulations and prime filter extensions. Finally, we also provide some observations on algebraic duality between some of these relations and corresponding ones between algebras.
In Section~\ref{sec:axiomatisable-models}  we provide a model-theoretic characterization using some of the notions previously introduced of which classes of models exactly correspond to models of a set of bi-intuitionistic formulas (Theorem~\ref{axmod}). In Section~\ref{sec:Goldblatt-Thomason} we provide a direct analogue of the famous Goldblatt-Thomason theorem and in Section~\ref{con} we give a brief summary of what we have accomplished.

\section{Preliminaries}\label{sec:prelim}
We begin by defining the language of bi-intuitionistic logic and its standard
Kripke semantics. 
 Let $\Prop$ be an arbitrary but fixed set of proposition letters.
  Then the language $\biLan(\Prop)$ is the set of formulas generated by the
  grammar
  $$
    \phi ::= p
      \mid \top
      \mid \bot
      \mid \phi \wedge \phi
      \mid \phi \vee \phi
      \mid \phi \to \phi
      \mid \phi \bito \phi,
  $$
where $p$ ranges over $\Prop$. 
We will write $\biLan$ instead of
$\biLan(\Prop)$, unless the set of propositional letters changes to something
other than $\Prop$, as for example, in Lemma~\ref{lem:satur}.

\begin{definition}\label{def:frame}
A \emph{Kripke frame} is any poset $(W, \leq)$.
Elements of\/ $W$ are often called \emph{worlds}. 
\end{definition}

Let $(W, \leq)$ be a Kripke frame and let $a \subseteq W$.
We write $\uparr a$ for the upward closure of $a$ and
$\downarr a$ for the downward closure of $a$.
A subset equal to its upward closure is an \emph{upset}, and
a subset equal to its downward closure is a \emph{downset}.

A \emph{valuation} for a Kripke frame $(W, \leq)$ is a map $V$
  that assigns to each proposition letter $p \in \Prop$ an upset
  $V(p)$ of $(W, \leq)$.
  A \emph{Kripke model} is a tuple $\mo{M} = (W, \leq, V)$ consisting of a
  Kripke frame $(W, \leq)$ and a valuation $V$.
  We say that the Kripke model $\mo{M}$ is \emph{based on}
  the Kripke frame $(W, \leq)$, and that $(W, \leq)$ is the
  \emph{underlying Kripke frame} of $\mo{M}$.
We will often use $\mathfrak{F}$ to stand for an arbitrary Kripke frame, and write
$\mo{M} = (\mathfrak{F}, V)$ for a model based on $\mathfrak{F}$.

\begin{definition}\label{def:truth-in-model} 
Truth of a formula $\phi$ at a world $w$ of a model $\mo{M} = (\mathfrak{F},V)$ based on a
frame $\mathfrak{F} = (W,\leq)$ is denoted symbolically by\quad $\mo{M}, w \Vdash \phi$\quad and  
defined recursively as follows:
  \begin{align*}
    \mo{M}, w \Vdash p &\iff x \in V(p) \\
    \mo{M}, w \Vdash \top &\phantom{\iff} \text{always} \\
    \mo{M}, w \Vdash \bot &\phantom{\iff} \text{never} \\
    \mo{M}, w \Vdash \phi \wedge \psi
      &\iff \mo{M}, w \Vdash \phi \text{ and } \mo{M}, w \Vdash \psi \\
    \mo{M}, w \Vdash \phi \vee \psi
      &\iff \mo{M}, w \Vdash \phi \text{ or } \mo{M}, w \Vdash \psi \\
    \mo{M}, w \Vdash \phi \to \psi
      &\iff \forall v \in W (\text{if } w \leq v 
                           \text{ and } \mo{M}, v \Vdash \phi
                          \text{ then } \mo{M}, v \Vdash \psi) \\
    \mo{M}, w \Vdash \phi \bito \psi
      &\iff \exists v \in W (v \leq w 
                           \text{ and } \mo{M}, v \Vdash \phi
                           \text{ and not } \mo{M}, v \Vdash \psi)
  \end{align*}
  For a set\/ $\Gamma$ of\/ $\biLan$-formulas we write
  $\mo{M}, w \Vdash \Gamma$ if\/  $\mo{M}, w \Vdash \phi$ for all $\phi \in \Gamma$.
\end{definition}

As usual, we extend the function $V$ from $\Prop$ to the collection of
  all formulas by defining 
  $$
    V(\phi) := \{ w \in W \mid \mo{M}, w \Vdash \phi \}.
    $$
This set is often called the \emph{truth set} of $\phi$.     
If $V(\phi) = W$, then we write $\mo{M} \Vdash \phi$.

The notion of truth at a world naturally extends to frames. For a frame
$\mathfrak{F} = (W,\leq)$ and $w \in W$, we write  
  $\mathfrak{F}, w \Vdash \phi$ if $\mo{M}, w \Vdash \phi$ for every $\mo{M}$ based on $(W,
  \leq)$. Further still, if $\mathfrak{F}, w \Vdash \phi$ holds for every $w \in W$, or,
  equivalently, if 
  $\mo{M} \Vdash \phi$ for  every $\mo{M}$ based on $(W, \leq)$, we say that
  $\phi$ is \emph{valid} on $\mathfrak{F}$, and write $\mathfrak{F} \Vdash \phi$ relying on context to
  distinguish between models and frames.   

If $\phi$ is valid on every frame $\mathfrak{F}$ from some collection $K$ of
frames, we say that $\phi$ is \emph{valid in $K$} and write
$K\Vdash \phi$.

\subsection{Upset algebras and double-Heyting algebras}\label{ssec:dbl-HA}

For any frame $\mathfrak{F}$ and any valuation $V$, the truth sets naturally form an
algebra of upsets. To begin with, we have
$V(\phi \wedge \psi) = V(\phi) \cap V(\psi)$, $V(\phi \vee \psi) = V(\phi)
\cup V(\psi)$ and $V(\top) = W$, $V(\bot) = \emptyset$, so the truth sets
ordered by inclusion form 
a bounded distributive lattice of upsets of $\mathfrak{F}$. In general this lattice is a
sublattice of the (bounded, distributive) lattice of all upsets of $\mathfrak{F}$.
  Furthermore, if we unfold the definitions of $\to$ and $\bito$ we see that
  the truth sets of $\phi \to \psi$ and $\phi \bito \psi$
  in a Kripke model $\mo{M} = (W, \leq, V)$
  can be defined as follows:
  \begin{align*}
    V(\phi \to \psi)
      &= W \setminus \downarr (V(\phi) \setminus V(\psi)) \\
    V(\phi \bito \psi)
      &= \uparr(V(\phi) \setminus V(\psi))
  \end{align*}
  Importantly, these sets are both upsets of $\mathfrak{F}$. The algebra defined
  this way will be called the \emph{algebra of definable upsets} in
  Section~\ref{ssec:def-p-filt-ext}.  Applying exactly the same
definitions to arbitrary upsets of $\mathfrak{F}$ (not necessarily values of any
valuation), we obtain the \emph{upset algebra} of $\mathfrak{F}$, denoted $\mathrm{Up}(\mathfrak{F})$.
It is easy to verify that $\mathrm{Up}(\mathfrak{F})$ satisfies the following equivalences:
\begin{align*}
a\cap b \subseteq c &\iff b\subseteq a\to c,\\
a\cup b \supseteq c &\iff b \supseteq c\bito a.
\end{align*}
It turns out that these are precisely the definitional properties of
\emph{double-Heyting algebras}.

\begin{definition}\label{def:dbl-H}
An algebra  $\mathbf{A} = (A;\wedge,\vee,\to,\bito,0,1)$, such that
$(A;\wedge,\vee,0,1)$ is a bounded distributive lattice and the equivalences  
\begin{align*}
x\wedge y \leq z &\iff y\leq x\to z,\\
x\vee y \geq z &\iff y \geq z\bito x,
\end{align*}
hold for all $x,y,z\in A$, is called a double-Heyting algebra. 
\end{definition}  

These properties are in fact equivalent to equations, so the class
$\mathsf{DH}$ of all double-Heyting algebras is an \emph{equational class},
hence, by the easy half of Birkhoff Theorem (see, e.g.,~\cite{Ber11} Lem.~4.36
and Thm.~4.41), a \emph{variety}, that is, a class closed 
under homomorphic images, subalgebras and direct products.
The connection between double-Heyting
algebras and bi-intuitionistic logic is very tight, in fact
$\biInt$ is \emph{algebraizable with equivalent algebraic semantics}
$\mathsf{DH}$.

Details of algebraizability are beyond the scope of this article, but to keep it
as self-contained as sensible we give a sketch. Let $\Sigma$ be a purely
functional signature, say, $\Sigma = \{\wedge,\vee,\to,\bito\}$.
Let $\mathbf{L}$ be a logic in the language of $\Sigma$, 
given as a deductive system $(\Gamma_\Sigma,\vdash)$ where
$\Gamma_\Sigma$ is a set of formulas over $\Sigma$
and $\vdash$ a Tarskian consequence
relation. Let $\mathcal{K}$ be a class of algebras of $\Sigma$, and let
$E_\Sigma$ be the set of all equations over $\Sigma$ (formally
$E_\Sigma = \Gamma_\Sigma\times \Gamma_\Sigma$).
We say that $\mathbf{L}$ is algebraizable with equivalent algebraic
  semantics $\mathcal{K}$, if there are
two translation maps $\varepsilon\colon \Gamma_\Sigma\to E_\Sigma$ and
$\delta\colon E_\Sigma\to \Gamma_\Sigma$, which behave as the diagrams below
indicate. 
$$
\delta(\varepsilon(\Gamma))\dashv\vdash\Gamma
\qquad
\begin{tikzcd}
\Gamma\vdash\Pi \ar[r, leftrightarrow, "\text{iff}"] &
\varepsilon(\Gamma)\models\varepsilon(\Pi) \ar[d,hook,"\text{case of}"]\\
  \delta(\Psi)\vdash \delta(\Phi) \ar[u,hook, "\text{case of}"]
  \ar[r, leftrightarrow, "\text{iff}"] & \Psi\models\Phi\\
\end{tikzcd}
\qquad
\varepsilon(\delta(\Psi))\sledom\kern-1.5pt\models\Psi
$$
Here, $\models$ is the usual equational (first-order) consequence relation,
$\Gamma\vdash\Pi$ means $\Gamma\vdash\pi$ for all $\pi\in \Pi$, 
similarly for $\Psi\models\Phi$, and
$\dashv\vdash$, $\sledom\kern-1.5pt\models$ mean that the relevant deducibility
holds both ways.

\begin{proposition}\label{prop:algebraizability}
The logic $\biInt$ is algebraizable with equivalent algebraic semantics
$\mathsf{DH}$. The translations are given by:
$$
\varphi\overset{\varepsilon}{\mapsto} (\varphi = 1)\qquad
\text{and}\qquad
(\tau_1 = \tau_2)\overset{\delta}{\mapsto}
(\tau_1\to \tau_2)\wedge(\tau_2\to\tau_1).
$$
Subvarieties
of\/ $\mathsf{DH}$ correspond in a similar manner to axiomatic extensions of\/
$\biInt$.
\end{proposition}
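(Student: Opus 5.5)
The plan is to verify the conditions of Blok--Pigozzi algebraizability directly, using a concrete Hilbert-style presentation of $\biInt$. Such a presentation consists of the axioms of intuitionistic propositional logic, together with the Rauszer axioms for $\bito$ (for instance $\phi \to (\psi \vee (\phi\bito\psi))$, $(\phi\bito\psi)\to \neg(\phi\to\psi)$ in a suitable formulation, and the residuation-style axioms). Its rules are modus ponens and the ``double negation'' rule: from $\phi$ infer $\neg(\top\bito\phi)$, equivalently from $\phi$ infer $\sim\!\phi \to \bot$ where $\sim$ is co-negation. By the standard characterisation, it suffices to check four things.

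\textbf{(i) Consequence is reflected by $\varepsilon$.} We need: if $\Gamma\vdash\phi$, then $\{\gamma=1 : \gamma\in\Gamma\}\models_{\mathsf{DH}}\phi=1$. This is soundness, checked axiom by axiom: each axiom evaluates to $1$ in every double-Heyting algebra, using the two residuation laws of Definition~\ref{def:dbl-H}. For the rules, modus ponens preserves the value $1$ because $x\wedge(x\to y)\le y$. The co-negation rule preserves it because $1\bito\!\ x$-type terms behave appropriately: if $x=1$ then $\top\bito x=0$, since $1\vee 0\ge 1$ gives $0\ge 1\bito 1$.

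\textbf{(ii) Equivalence of $\phi$ and $\delta(\varepsilon(\phi))$.} We need $\phi\dashv\vdash (\phi\to\top)\wedge(\top\to\phi)$, which is already an intuitionistic fact.

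\textbf{(iii) Equivalence of $\tau_1=\tau_2$ and $\varepsilon(\delta(\tau_1=\tau_2))$.} We need $\tau_1=\tau_2 \mathrel{\sledom\kern-1.5pt\models} (\tau_1\to\tau_2)\wedge(\tau_2\to\tau_1)=1$. This holds in all Heyting algebras, since $x\to y=1$ iff $x\le y$ by residuation with $x\wedge 1\le y$.

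\textbf{(iv) The converse direction.} We need $\Psi\models_{\mathsf{DH}}\Phi$ to imply $\delta(\Psi)\vdash\delta(\Phi)$. Given (ii) and (iii), the standard Blok--Pigozzi reduction shows it is enough to prove completeness in the form: if $\varepsilon(\Gamma)\models\varepsilon(\phi)$ then $\Gamma\vdash\phi$.

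The completeness in (iv) is where the real work lies, and I would prove it by a Lindenbaum--Tarski construction. Define $\phi\equiv_\Gamma\psi$ iff $\Gamma\vdash\phi\to\psi$ and $\Gamma\vdash\psi\to\phi$. The key step is showing that $\equiv_\Gamma$ is a congruence for all operations, including $\bito$. For $\wedge,\vee,\to$ this is intuitionistic. For $\bito$, we must derive $\phi\to\phi'\vdash (\phi\bito\psi)\to(\phi'\bito\psi)$, and the contravariant analogue in the second argument, from a \emph{set} of hypotheses $\Gamma$. Here the deduction theorem fails in its naive form. Instead I would use the co-negation rule, which turns $\Gamma\vdash\chi$ into $\Gamma\vdash\neg\!\sim\!\chi$, together with the derivable axiom $\neg\!\sim\!(\phi\to\phi')\to((\phi\bito\psi)\to(\phi'\bito\psi))$. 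Proving the latter is the main obstacle; I would first try to obtain it from the Rauszer axiom $\phi'\vee(\phi\bito\phi')$ combined with monotonicity of $\bito$.

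Once congruence is established, the quotient $\Gamma_\Sigma/{\equiv_\Gamma}$ satisfies the residuation laws, by the corresponding derivable axioms, and is therefore in $\mathsf{DH}$. Under the canonical assignment every $\gamma\in\Gamma$ evaluates to $1$, so $\phi=1$ holds there, which means $\Gamma\vdash\phi$. Finally, the statement about subvarieties follows from the general theory: an algebraizable logic's axiomatic extensions correspond, via $\varepsilon$ and $\delta$, to the relative subvarieties of its equivalent algebraic semantics, and these are subvarieties since $\mathsf{DH}$ is a variety.
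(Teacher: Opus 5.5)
The paper does not prove this proposition. It quotes it as a known fact, and the surrounding text only explains what algebraizability means.

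Your plan is the standard direct verification, and its architecture is sound. Soundness, (iii), and strong completeness via a Lindenbaum--Tarski quotient are exactly what Blok--Pigozzi require. You also correctly single out the one non-intuitionistic point: $\phi/\neg{\sim}\phi$ (with ${\sim}\chi:=\top\bito\chi$) must be a rule applicable to hypotheses. If it were only a rule for theorems, you would get the local consequence relation, which $\mathsf{DH}$ does not algebraize. Indeed $\{p=1\}\models_{\mathsf{DH}}\neg{\sim}p=1$, yet $p\nvdash\neg{\sim}p$ locally (take the chain $v<w$ with $p$ true only at $w$).

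A side benefit of your route is that it proves completeness from scratch. The resulting consequence relation is therefore the global $\mathsf{DH}$-consequence whichever sound axioms you choose, provided they make the quotient a double-Heyting algebra. So you may pick the axioms for convenience.

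Two details need repair, and the step you call the main obstacle can be closed more simply.

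First, $(\phi\bito\psi)\to\neg(\phi\to\psi)$ is not sound, so step (i) would fail for it. In the chain $v<w$ with $V(\phi)=\{v,w\}$ and $V(\psi)=\{w\}$, we have both $w\Vdash\phi\bito\psi$ and $w\Vdash\phi\to\psi$. The correct axiom is $(\phi\bito\psi)\to{\sim}(\phi\to\psi)$.

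Second, $\phi'\vee(\phi\bito\phi')$ is not valid (take $\phi=\bot$); the axiom you want is $\phi\to(\phi'\vee(\phi\bito\phi'))$.

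As for the obstacle: if the only rules are modus ponens and the co-negation rule, the monotonicity of $\bito$ you plan to invoke is not available for free. Obtaining it from your auxiliary theorem would be circular. The natural way to obtain it also makes the auxiliary theorem unnecessary. Derive directly the rule: if $\Gamma\vdash\chi\to(\phi\vee\psi)$ then $\Gamma\vdash(\chi\bito\phi)\to\psi$.
\begin{enumerate}
\item Apply the co-negation rule to get $\Gamma\vdash\neg{\sim}(\chi\to(\phi\vee\psi))$.
\item Contrapose the sound axioms $(\alpha\bito\beta)\to{\sim}(\alpha\to\beta)$ and $((\alpha\bito\beta)\bito\gamma)\to(\alpha\bito(\beta\vee\gamma))$ to reach $\Gamma\vdash\neg((\chi\bito\phi)\bito\psi)$.
\item Finish with the sound axiom $\neg(\alpha\bito\beta)\to(\alpha\to\beta)$.
\end{enumerate}
Together with $\chi\to(\phi\vee(\chi\bito\phi))$, this rule gives compatibility of $\equiv_\Gamma$ with $\bito$ in both arguments. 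It also gives both halves of the $\bito$-residuation law in $\Gamma_\Sigma/{\equiv_\Gamma}$. The rest of your argument then goes through as written.
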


For more on double-Heyting algebras, including a very nice crash course on
the duality theory for them, see~\cite{Tay17}.  

\subsection{Standard translation}\label{ssec:ST}

Consider a first order language with a binary relation symbol $R$,
and a unary predicate $P$ for each $p \in \Prop$. Following the tradition in
modal logic~\cite{blackburn}, we call it the \emph{correspondence language}  $\biLan^{corr}$
for  $\biLan$. Note that each model $\mo{M}$ for $\biLan$ is also
a model for $\biLan^{corr}$ in the usual first-order sense, namely,
$W$ is the domain, $\leq$ is the interpretation of $R$, and $V(p)$ is the
interpretation of $P$. 
The usual first-order satisfaction relation will be denoted by $\vDash$, and by
$\mo{M} \vDash \phi [a]$ we mean that $a\in W$ satisfies the first-order formula
$\phi$.  

The \emph{standard translation} of bi-intuitionistic formulas to
$\biLan^{corr}$ is defined inductively: 
\begin{align*}
ST_x(\bot) &= \bot\\
ST_x(\top) &=  \top\\
ST_x(p) &=   Px\\
ST_x(\phi \wedge \psi) &= ST_x(\phi) \wedge ST_x(\psi)\\         
ST_x(\phi \vee \psi) &= ST_x(\phi) \vee ST_x(\psi)\\
ST_x(\phi\rightarrow\psi) &= \forall y(\neg (Rxy \wedge ST_y(\phi))\vee ST_y(\psi))\\
ST_x(\phi \bito \psi) &= \exists y (Ryx \wedge ST_y(\phi)\wedge \neg ST_y(\psi)),  
\end{align*}
with the usual proviso of choosing $y$ carefully to avoid free variable capture.

\begin{proposition} \label{pro:tr}  For any Kripke model
$\mo{M} = (W,\leq, V)$, any  $w\in W$, and any bi-intuitionistic formula
$\phi$, we have:
$$
\mo{M}, w \Vdash \phi\qquad \text{if and only if}\qquad \mo{M} \vDash ST_x(\phi) [w].
$$
\end{proposition}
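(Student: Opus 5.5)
We argue by structural induction on $\phi$, proving the claim simultaneously for all worlds $w \in W$ and all choices of the free variable. The standard translation $ST_x(\phi)$ is a first-order formula with exactly one free variable $x$, and $ST_y(\phi)$ differs from it only by renaming. So it is convenient to strengthen the induction hypothesis to: for every variable $z$ and every $w\in W$, $\mo{M}, w \Vdash \phi$ iff $\mo{M}\vDash ST_z(\phi)[w]$. This uniformity in the variable is what lets the hypothesis be applied to the bound variable $y$ in the clauses for $\to$ and $\bito$.

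The base cases are immediate. $ST_x(\top)=\top$ holds everywhere, $ST_x(\bot)=\bot$ holds nowhere, and $\mo{M}\vDash Px[w]$ iff $w$ lies in the interpretation of $P$, which is $V(p)$. The truth clause for $p$ in Definition~\ref{def:truth-in-model} (whose ``$x$'' there is evidently a typo for $w$) says exactly that $w\in V(p)$. For $\wedge$ and $\vee$, the Tarskian clauses for first-order $\wedge,\vee$ match the Kripke clauses, and we apply the induction hypothesis to each conjunct or disjunct.

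For $\phi\to\psi$: $\mo{M}\vDash \forall y(\neg(Rxy\wedge ST_y(\phi))\vee ST_y(\psi))[w]$ holds iff for every $v\in W$, either $w\not\leq v$, or $\mo{M}\not\vDash ST_y(\phi)[v]$, or $\mo{M}\vDash ST_y(\psi)[v]$. Here $R$ is interpreted as $\leq$ and $x\mapsto w$, $y\mapsto v$. By the induction hypothesis for the variable $y$, this is equivalent to: for all $v\geq w$, if $\mo{M},v\Vdash\phi$ then $\mo{M},v\Vdash\psi$. That is the Kripke clause.

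For $\phi\bito\psi$: $\exists y(Ryx\wedge ST_y(\phi)\wedge\neg ST_y(\psi))$ holds at $w$ iff there is $v\leq w$ with $\mo{M}\vDash ST_y(\phi)[v]$ and $\mo{M}\not\vDash ST_y(\psi)[v]$. By the induction hypothesis this is the Kripke clause for $\bito$. Note that $Ryx$ correctly encodes $v\leq w$.

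The only delicate point is the variable bookkeeping. Because $y$ is chosen fresh, $x$ does not occur free in $ST_y(\phi)$ or $ST_y(\psi)$. Hence the truth of these subformulas under an assignment depends only on the value of $y$, and applying the hypothesis at $v$ is legitimate. Strengthening the induction to arbitrary variables handles this, and everything else is routine unfolding of definitions.
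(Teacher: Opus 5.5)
Your proposal is correct and matches the paper's (omitted) argument: a straightforward induction on the complexity of $\phi$. Strengthening the hypothesis to arbitrary free variables is the standard way to make the $\to$ and $\bito$ clauses go through, and you are right that the ``$x \in V(p)$'' in the truth clause for $p$ is a typo for $w$. One tiny quibble: $ST_x(\phi)$ has \emph{at most} one free variable (e.g.\ $ST_x(\top)=\top$ has none), which does not affect the argument.
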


\begin{proof} We omit the proof, which would proceed by straightforward induction on
the complexity of formulas.
\end{proof} 

\section{Some model and frame constructions}\label{sec:mod-and-frm}

The results below will involve some machinery from the classical model theory of
first-order logic and thus we will start the section by recalling a few useful
notions which can be found in any of the standard references,
such as~\cite{hodges}.  

\subsection{Countably saturated structures}\label{ssec:satur}
We will use $\overline{a}$, $\overline{b}$, and $\overline{x}$, $\overline{y}$
to denote sequences of elements of a model and variables, respectively. 
For a model $\mo{M}$, by $\dom{\mo{M}}$ we denote the domain of $\mo{M}$.
In particular, if
$\mo{M}$ is a model based on a Kripke frame $(W,\leq)$, then $\dom{\mo{M}} = W$. 
If $X \subseteq \dom{\mo{M}}$, then $(\mo{M}, a)_{a \in X}$ is the \emph{expansion}
of $\mo{M}$ obtained by adding a constant $c_a$ for each $a \in X$.

We say that a set of first order formulas $\Phi(\overline{x})$  is
\emph{realisable} in a model $\mo{M}$ if there is some sequence $\overline{a}$
of elements of $W$ such that $\mo{M} \vDash \Phi [\overline{a}]$.
$\Phi(\overline{x})$ is said to be \emph{refutable} in $\mo{M}$  if
$\Phi^{\prime}(\overline{x}) = \{ \neg \phi : \phi \in \Phi(\overline{x})\}$ is
realisable in $\mo{M}$.

\begin{definition}
  Let $\lambda$ be a cardinal. A model $\mo{M}$ of  first order
  logic is said to be \emph{$\lambda$-saturated} if whenever $X \subseteq
  \dom{\mo{M}}$ and $|X| \ < \ \lambda$, then  some expansion $(\mo{M}, a)_{a
    \in X}$ of $\mo{M}$  realises every set of formulas $\Phi(x)$ of the
  language of $(\mo{M}, a)_{a \in X}$ which is consistent with the set of all
  first order sentences true in $(\mo{M}, a)_{a \in X}$.
\end{definition} 

 Note that if $\kappa \ < \ \lambda$, then $\lambda$-saturation implies
 $\kappa$-saturation. 

\begin{definition} Let $\mo{M}$ and $\mo{N}$ be two first order models.
$\mo{N}$ is an \emph{elementary extension} of $\mo{M}$ if $\mo{M}$ is
(isomorphic to) a submodel $\mo{N}^{\prime}$ of $\mo{N}$ such that where
$\overline{a}$ is a sequence of elements of $\mo{N}^{\prime}$, for each first
order formula $\phi$, 
$$
\mo{N}^{\prime} \vDash \phi[\overline{a}] \iff  \mo{N} \vDash \phi[\overline{a}].
$$
\end{definition}

The following proposition is a corollary of Theorems~6.1.4 and~6.1.8 in~\cite{CK73}.
The method of obtaining such saturated models is often
referred to as the Keisler method.

\begin{proposition}\label{prop:om-satur-ext}
Each model $\mo{M}$ for\/ $\biLan^{corr}$
has an $\omega$-saturated elementary extension, obtainable as an ultrapower of
$\mo{M}$.
\end{proposition}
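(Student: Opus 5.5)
The plan is to build the extension as an ultrapower, following the Keisler method the paper cites, i.e.\ Theorems~6.1.4 and~6.1.8 of~\cite{CK73}. The signature of $\biLan^{corr}$ consists of one binary relation symbol $R$ and one unary predicate $P$ for each $p\in\Prop$. Put $\mu = |\Prop| + \aleph_0$; this is the cardinality of the set of first-order formulas of $\biLan^{corr}$. I fix a countably incomplete $\mu^+$-good ultrafilter $D$ over an index set $I$ with $|I| = \mu$. The existence of such an ultrafilter is Keisler's theorem, Theorem~6.1.4 in~\cite{CK73}, and it does not depend on ZFC beyond choice. I then form the ultrapower $\prod_D \mo{M}$.

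First, I argue that $\mo{M}$ embeds elementarily into $\prod_D\mo{M}$. The map is the diagonal embedding $a \mapsto [\langle a\rangle_{i\in I}]_D$. By \L o\'s's theorem, for every formula $\phi(\overline{x})$ and every tuple $\overline{a}$ from $W$,
\[
\mo{M}\vDash\phi[\overline{a}] \quad\text{iff}\quad \{i\in I : \mo{M}\vDash\phi[\overline{a}]\}\in D \quad\text{iff}\quad \prod_D\mo{M}\vDash\phi[\overline{d(a)}].
\]
So the image of the diagonal map is an isomorphic copy of $\mo{M}$ satisfying exactly the condition in the definition of elementary extension.

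Second, I use Theorem~6.1.8 of~\cite{CK73}. It says that an ultrapower modulo a countably incomplete $\mu^+$-good ultrafilter, in a language of size at most $\mu$, is $\mu^+$-saturated. Since $\omega < \mu^+$, the remark that $\lambda$-saturation implies $\kappa$-saturation for $\kappa<\lambda$ gives $\omega$-saturation. In fact $\omega$-saturation is far weaker than what we need, so a more elementary route is also available. If $\Prop$ were countable, any nonprincipal ultrafilter on $\omega$ would do, because such an ultrapower is $\aleph_1$-saturated. I will nonetheless use the goodness route so that $\Prop$ may be arbitrary.

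One point needs care. The structure obtained should again be a Kripke model, i.e.\ $R$ should be a partial order and each $P$ should be interpreted as an upset. This follows from the elementarity just established, since reflexivity, antisymmetry, transitivity and the persistence sentences $\forall x\forall y(Px\wedge Rxy\to Py)$ are first-order sentences true in $\mo{M}$. This is the only adaptation to the present setting, and it is routine. The main obstacle is existential: the existence of good ultrafilters (Theorem~6.1.4). That result is being cited, so I would not reprove it.
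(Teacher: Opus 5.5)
Your proposal is correct and follows the paper's route. The paper simply cites Theorems~6.1.4 and~6.1.8 of~\cite{CK73} (the Keisler method), and you unpack that citation: a countably incomplete $\mu^+$-good ultrafilter over a set of size $\mu=|\Prop|+\aleph_0$, \L o\'s's theorem for the diagonal embedding, and $\mu^+$-saturation implying $\omega$-saturation.

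Your extra remarks are also sound. The ultrapower is again a Kripke model by elementarity, and for uncountable $\Prop$ a nonprincipal ultrafilter on $\omega$ need not give $\omega$-saturation. The only quibble is historical: the ZFC existence of such ultrafilters is Kunen's theorem, since Keisler assumed GCH.
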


\subsection{Bounded morphisms, generated subframes, disjoint
  unions}\label{ssec:b-morph-gen-subfr-disj-un} 

Bounded morphisms between Kripke frames and models are the maps that
have the correct functorial properties in category theory sense. For models,
in particular, they preserve and reflect truth of $\biLan$-formulas.  
They map valuations to valuations, and
if $w$ is a world and $f$ is a bounded morphism,
then every formula true at $w$ is also true at $f(w)$ (preservation),
and every formula true at $f(w)$ is also true at $w$ as well (reflection).

\begin{definition}\label{mt:def:bd-mor}
Let $\mo{F} = (W, \leq)$ and $\mo{F}' = (W', \leq')$ be two Kripke frames.
A function $f : W \to W'$ is a \emph{bounded morphism}
from $\mo{F}$ to $\mo{F}'$ if it is order-preserving and moreover,
for all $w, v \in W$ and $u' \in W'$:
\begin{enumerate}
\item[\textup{(}ub\textup{)}] \label{mt:it:bm-b1}
         If $f(w) \leq' u'$ then there exists a $u \in W$
          such that $w \leq u$ and $f(u) = u'$;
\item[\textup{(}lb\textup{)}] \label{mt:it:bm-b2}
          If $u' \leq' f(w)$ then there exists a $u \in W$
          such that $u \leq w$ and $f(u) = u'$.
        \end{enumerate}
If $f$ is surjective, we say that $\mo{F}'$ is a \emph{bounded-morphic image} of
$\mo{F}$.          
\end{definition}

  
\begin{definition}\label{mt:def:bd-mor-model}
  Let $\mo{M} = (\mo{F}, V)$ and $\mo{M}' = (\mo{F}', V')$ be two Kripke models.
  A function $f : W \to W'$ is a \emph{bounded morphism}
  from $\mo{M}$ to $\mo{M}'$ if it is a bounded morphism
  from $\mo{F} \to \mo{F}'$ and for all $w \in W$ and $p \in \Prop$:
\begin{enumerate}
\item[\textup{(}prop\textup{)}] \label{mt:it:bm-prop}
          $w \in V(p)$ if and only if $f(w) \in V'(p)$.
\end{enumerate}
\end{definition}

\begin{proposition}\label{mt:prop:mor-pres-truth}
  Let $\mo{M} = (\mo{F}, V)$ and $\mo{M}' = (\mo{F}', V')$ be two Kripke
  models and $f : \mo{M} \to \mo{M}'$ a bounded morphism.
  Then for all $w \in W$ and $\phi \in \biLan$ we have
  $$
    \mo{M}, w \Vdash \phi
      \iff \mo{M}', f(w) \Vdash \phi.
  $$
\end{proposition}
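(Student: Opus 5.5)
We argue by induction on the complexity of $\phi$, proving the biconditional for all $w \in W$ at once, so that the induction hypothesis may be applied at worlds other than $w$. The base cases are immediate. For $\top$ and $\bot$ both sides agree trivially. For a proposition letter $p$, the equivalence $w \in V(p)$ iff $f(w) \in V'(p)$ is exactly clause (prop) of Definition~\ref{mt:def:bd-mor-model}. The cases $\phi \wedge \psi$ and $\phi \vee \psi$ follow directly from the induction hypothesis, since the truth clauses in Definition~\ref{def:truth-in-model} are pointwise.

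The substantive cases are $\to$ and $\bito$. Each direction uses order preservation together with one of the back conditions (ub) or (lb).

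For $\phi \to \psi$, suppose first that $\mo{M}, w \Vdash \phi \to \psi$. Take $u' \geq' f(w)$ with $\mo{M}', u' \Vdash \phi$. By (ub) there is $u \geq w$ with $f(u) = u'$. The induction hypothesis gives $\mo{M}, u \Vdash \phi$, hence $\mo{M}, u \Vdash \psi$, and again by the induction hypothesis $\mo{M}', u' \Vdash \psi$. Conversely, suppose $\mo{M}', f(w) \Vdash \phi \to \psi$. Take $u \geq w$ with $\mo{M}, u \Vdash \phi$. Since $f$ is order-preserving, $f(u) \geq' f(w)$. The induction hypothesis gives $\mo{M}', f(u) \Vdash \phi$, hence $\mo{M}', f(u) \Vdash \psi$, and pulling back yields $\mo{M}, u \Vdash \psi$.

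The case $\phi \bito \psi$ is dual. From a witness $v \leq w$ with $\phi$ true and $\psi$ false at $v$, order preservation gives $f(v) \leq' f(w)$, and the induction hypothesis (used in both directions, to transfer truth of $\phi$ and falsity of $\psi$) makes $f(v)$ a witness in $\mo{M}'$. Conversely, from a witness $v' \leq' f(w)$ in $\mo{M}'$, clause (lb) provides $v \leq w$ with $f(v) = v'$, and the induction hypothesis transfers the witnessing property back to $v$.

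There is no real obstacle here. The only point needing care is that the induction hypothesis must be stated as a biconditional at all worlds, because the $\bito$ case requires reflecting \emph{falsity} of $\psi$ as well as truth of $\phi$.
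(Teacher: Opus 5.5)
Your proposal is correct and takes the same route as the paper, which only says the result follows by induction on the complexity of $\phi$ and leaves the details as an exercise; your case analysis supplies those details, using order preservation with (ub) for $\to$ and order preservation with (lb) for $\bito$. One small correction to your closing remark: the $\to$ case also needs the induction hypothesis in both directions (truth of $\phi$ is transferred one way and truth of $\psi$ the other), not only the $\bito$ case.
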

\begin{proof}
Induction on the complexity of $\phi$. We leave it to the reader as an exercise.
\end{proof}

\begin{definition}\label{def:gen-subfr}
Let $\mo{M}$ and $\mo{N}$ be Kripke models such that  
$\mo{M}$ is a substructure of $\mo{N}$ is the first-order sense. If the
natural inclusion map is a bounded morphism, then we say that
$\mo{M}$ is a \emph{generated submodel} of $\mo{N}$. The same, \emph{mutatis mutandis},
applies to Kripke frames, the resulting notion being that of a \emph{generated
  subframe}.  
\end{definition}  

\begin{lemma}\label{lem:gen-subfr}
Let\/ $\mo{F} = (W,\leq)$ and $\mo{F}' = (W',\leq')$ be Kripke frames. Then,
$\mo{F}'$ is a generated subframe of\/ $\mo{F}$ if and only if  (i) $W'\subseteq
W$, (ii) $\leq'$ is the restriction of $\leq$ to $W'$, and (iii) $W'$ is both
upward and downward closed in $W$.

If\/ $\mo{M} = (\mo{F}, V)$ and\/ $\mo{M}' =
(\mo{F}',V')$ are Kripke models, then $\mo{M}'$ is a generated submodel of\/
$\mo{M}$ if and only if (i)--(iii) hold and moreover (iv) $V'$ is the
restriction of $V$ to $W'$.
\end{lemma}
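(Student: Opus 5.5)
We unfold Definition~\ref{def:gen-subfr} and check both directions against Definition~\ref{mt:def:bd-mor}. Here $\mo{F}'$ is a generated subframe of $\mo{F}$ exactly when $\mo{F}'$ is a first-order substructure of $\mo{F}$ and the inclusion $\iota\colon W'\to W$ is a bounded morphism. Being a substructure in the signature $\{R\}$ already means (i) $W'\subseteq W$ and (ii) $\leq'$ is the restriction of $\leq$ to $W'$. So in both directions the real content is the equivalence between (iii) and the back conditions (ub) and (lb) for $\iota$.

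\emph{From left to right.} Suppose $\iota$ is a bounded morphism. Take $w\in W'$ and $u\in W$ with $w\leq u$, that is, $\iota(w)\leq u$. By (ub) there is $u_0\in W'$ with $w\leq' u_0$ and $\iota(u_0)=u$, so $u=u_0\in W'$. Hence $W'$ is an upset of $W$. The symmetric argument using (lb) shows that $W'$ is a downset, which gives (iii).

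\emph{From right to left.} Assume (i)--(iii). By (i) and (ii), $\mo{F}'$ is a substructure of $\mo{F}$, and $\iota$ is order-preserving, since $\leq'\subseteq\leq$. For (ub), let $\iota(w)=w\leq u$ with $w\in W'$. By upward closure, $u\in W'$, and by (ii) $w\leq' u$, so $u$ itself is the required witness. Condition (lb) is checked in the same way using downward closure.

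For models, a substructure in the signature $\{R\}\cup\{P : p\in\Prop\}$ additionally requires each $P$ to be interpreted as the restriction, so $V'(p)=V(p)\cap W'$. This is exactly condition (prop) of Definition~\ref{mt:def:bd-mor-model} for $\iota$, namely $w\in V'(p)$ iff $w\in V(p)$. Hence (iv) is both implied by and sufficient for the extra requirements, and the model case reduces to the frame case.

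No step here is a real obstacle; the only point needing care is noticing that (ii) is forced by the substructure requirement rather than by the morphism conditions.
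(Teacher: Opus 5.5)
Your proposal is correct and is the same unfolding of the definitions that the paper compresses into ``Straightforward from the definitions''; the model case, via the observation that (prop) for the inclusion and the substructure requirement on the predicates both amount to (iv), is handled properly. One small correction to your closing remark: given (i), order-preservation together with (ub) already forces $\leq'$ to coincide with the restriction of $\leq$ to $W'$. So (ii) follows from the morphism conditions as well as from the substructure requirement, though this does not affect your argument.
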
 
  
\begin{proof}
Straightforward from the definitions.
\end{proof}

Yet another frame construction we will need is the disjoint union of
frames and of models.

\begin{definition}\label{def:disj-union}
Let $\mathcal{F} = \{\mo{F}_i: i\in I\}$ be a set of Kripke frames
with $\mo{F}_i = (W_i,\leq_i)$, and let
$\mathcal{M} = \{(\mo{F}_i, V_i): i\in I\}$ be a set of Kripke models.  
The \emph{disjoint union} of $\mathcal{F}$ is defined by
$$
    \coprod_{i \in I} \mo{F}_i = (\biguplus W_i, \biguplus \leq_i),
$$
where $\biguplus W_i$ and $\biguplus \leq_i$ are the disjoint union of the sets
$W_i$ and the disjoint union of the relations $\leq_i$, respectively. 
Similarly, the disjoint union of $\mathcal{M}$ is defined  by
$$
    \coprod_{i \in I} (\mo{F}_i, V_i) = (\coprod_{i \in I} \mo{F}_i, V),
$$
where for $w \in W_i$, $w \in V(p)$ if $w \in V_i(p)$.
\end{definition}

Note that for sets $\mathcal{F}$ and $\mathcal{M}$ above,
the identity function $\iota_j(w) = w$, for each $w \in W_j$, is a bounded
morphism between Kripke frames and models, respectively.
The proposition below follows easily from the definitions involved.

\begin{proposition}\label{prop:truth-pres}
Let $\{(\mo{F}_i, V_i): i \in I \}$ be a set of Kripke models
based on frames $\mo{F}_i = (W_i, \leq_i)$. Let
$(\mo{A}, V^A)$ be a Kripke model based on a frame $\mo{A} = (A,\leq^A)$
and let $(\mo{B}, V^B)$ be a Kripke model based on a frame $\mo{B} = (B,\leq^B)$.
Then, for any $\phi \in \biLan$, the following hold:
\begin{enumerate}
\item $(\mo{F}_i, V_i) \Vdash \phi$ for all $i \in I$, if and only if\/
          $\coprod_{i \in I} (\mo{F}_i, V_i) \Vdash \phi$.
\item $\mo{F}_i \Vdash \phi$ for all $i \in I$, if and only if\/
          $\coprod_{i \in I} \mo{F}_i\Vdash \phi$.
\item If\/ $(\mo{A},V^A)$ is a generated submodel of $(\mo{B},V^B)$
and $\mo{B} \Vdash \phi$, then $\mo{A} \Vdash \phi$.
\item If\/ $\mo{A}$ is a generated subframe of $\mo{B}$ and
          $\mo{B} \Vdash \phi$, then $\mo{A}\Vdash \phi$.
\item If\/ $(\mo{A},V^A)$ is a bounded-morphic image of $(\mo{B},V^B)$ and
          $(\mo{B},V^B) \Vdash \phi$ then $(\mo{A},V^A) \Vdash \phi$.
\item If\/ $\mo{A}$ is a bounded-morphic image of $\mo{B}$ and
          $\mo{B} \Vdash \phi$ then $\mo{A} \Vdash \phi$.
\end{enumerate}
\end{proposition}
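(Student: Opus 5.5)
The plan is to reduce everything to Proposition~\ref{mt:prop:mor-pres-truth}, the invariance of truth under bounded morphisms, together with Lemma~\ref{lem:gen-subfr}. The model items (1), (3), (5) are handled first. The frame items (2), (4), (6) then follow by lifting valuations along the relevant maps.

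For (1), each inclusion $\iota_j\colon (\mo{F}_j,V_j)\to\coprod_{i}(\mo{F}_i,V_i)$ is a bounded morphism of models, as noted after Definition~\ref{def:disj-union}. By Proposition~\ref{mt:prop:mor-pres-truth}, for every $w\in W_j$ we have $(\mo{F}_j,V_j),w\Vdash\phi$ iff $\coprod_i(\mo{F}_i,V_i),\iota_j(w)\Vdash\phi$. Every world of the union lies in the image of exactly one $\iota_j$, so $\phi$ is true everywhere in the union iff it is true everywhere in each $(\mo{F}_i,V_i)$.

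For (3), the inclusion of a generated submodel is a bounded morphism by Definition~\ref{def:gen-subfr}. Proposition~\ref{mt:prop:mor-pres-truth} then gives truth at each $w\in A$ from truth at $w$ in $\mo{B}$.

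For (5), take a surjective bounded morphism $f$ from $(\mo{B},V^B)$ onto $(\mo{A},V^A)$. Any $a\in A$ equals $f(b)$ for some $b$. Since $\phi$ holds at $b$, it holds at $a$ by Proposition~\ref{mt:prop:mor-pres-truth}.

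For the frame versions, fix an arbitrary valuation on the target frame and pull it back or push it forward suitably.

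For (2), the direction from right to left is as follows. Given a valuation $V_j$ on $\mo{F}_j$, extend it to a valuation on the union by choosing arbitrary valuations (say empty) on the other components. This gives an upset valuation because components are pairwise incomparable. Then apply (1) to that component.

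Conversely, any valuation $V$ on $\coprod_i\mo{F}_i$ restricts to upset valuations $V_i$ on each $\mo{F}_i$, and the model $(\coprod_i \mo{F}_i, V)$ is exactly $\coprod_i(\mo{F}_i,V_i)$. So (1) applies.

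For (4), a valuation $V^A$ on $\mo{A}$ is extended to $\mo{B}$ by setting $V^B(p)=V^A(p)$. This is still an upset of $\mo{B}$ because $A$ is upward closed by Lemma~\ref{lem:gen-subfr}. Moreover $(\mo{A},V^A)$ is then a generated submodel of $(\mo{B},V^B)$, so (3) applies.

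For (6), given a valuation $V^A$ on $\mo{A}$ and the surjective bounded morphism $f\colon\mo{B}\to\mo{A}$, define $V^B(p)=f^{-1}(V^A(p))$. This is an upset since $f$ is order-preserving. Condition (prop) of Definition~\ref{mt:def:bd-mor-model} holds by construction. Hence $f$ is a surjective bounded morphism of models, and (5) applies.

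The only point needing care, and the one I would check first, is that every valuation constructed is really an upset valuation. For (4) this uses upward closure of $A$. For (6) it uses monotonicity of $f$. For (2) it uses that $\biguplus\leq_i$ never relates worlds in distinct components. Beyond this, the argument is routine bookkeeping on top of Proposition~\ref{mt:prop:mor-pres-truth}.
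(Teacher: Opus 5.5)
Your proposal is correct and follows the route the paper has in mind. The paper gives no proof; it only observes that the inclusions $\iota_j$ are bounded morphisms and says the proposition follows easily from the definitions, that is, from Proposition~\ref{mt:prop:mor-pres-truth}. Your valuation-lifting arguments for the frame items (2), (4) and (6), including the checks that the constructed valuations are upsets, supply exactly the details the paper leaves implicit.
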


\subsection{Shadows of duality}\label{ssec:duality}

Duality theory for double-Heyting algebras piggybacks on Priestley duality, of which
we assume the reader to have some knowledge. However, since the whole
topological machinery will not be used, we will recall the concepts and results
absolutely necessary to follow our arguments. For a very readable
account of Priestley duality we refer the reader to~\cite{ilo}.
For the specific case of double-Heyting algebras~Ch. 2 of~\cite{Tay17} serves as
a good reference. 

We mentioned upset algebras in Section~\ref{ssec:dbl-HA} briefly, now we will
give proper definitions.

\begin{definition}\label{def:upset-alg}
Let $\mo{F} = (W, \leq)$ be a Kripke frame. The \emph{upset algebra}
$\mo{F}^+$ of $\mo{F}$ is the structure
$\left\langle \mathrm{Up}(W); \cap, \cup, \rightarrow, \bito, 0, 1\right\rangle$,
where $\mathrm{Up}(W)$ is the set of all upsets of $(W, \leq)$, the operations
$\cap$ and $\cup$ are set-theoretical,  $0 = \emptyset$, $1 = W$,
and the remaining operations are defined by:
\begin{itemize}
\item[] $X \bito Y = \{w \in W : \exists v \in W (v\leq w \mathbin{\&} v \in X
\mathbin{\&} v \notin Y)\}$,
\item[] $X \rightarrow Y = \{w \in W: \forall v \in W ((w\leq v \mathbin{\&} v
  \in X) \Rightarrow v \in Y)\}$. 
\end{itemize}
\end{definition}

The reader is invited to check that the definitions of $\to$ and $\bito$ given
above coincide with these from Section~\ref{ssec:dbl-HA}. The algebraic
structure of $\mo{F}^+$ does not depend on valuations for $\mo{F}$, and
therefore for any Kripke \emph{model} $\mo{M}$ we will write  
$\mo{M}^+$ for the upset algebra of the underlying frame. 

The next proposition is not difficult to verify using observations from
Section~\ref{ssec:dbl-HA}. We leave it as an exercise
to the reader.

\begin{proposition} Let $\mo{M}$ be a Kripke model for bi-intuitionistic
logic. Then $\mo{M}^+$ is a double-Heyting algebra.
\end{proposition}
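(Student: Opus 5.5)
We verify directly that $\mo{M}^+$ satisfies Definition~\ref{def:dbl-H}. Let $\mo{F} = (W,\leq)$ be the underlying frame of $\mo{M}$. The structure of $\mo{M}^+$ depends only on $\mo{F}$, so valuations play no role. There are three things to check: closure of $\mathrm{Up}(W)$ under the operations, the lattice axioms, and the two residuation equivalences.

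\emph{Closure.} Arbitrary intersections and unions of upsets are upsets, and $\emptyset$ and $W$ are upsets. For $X \bito Y$: if $w \in X\bito Y$ is witnessed by $v \leq w$ with $v\in X\setminus Y$, and $w \leq w'$, then transitivity gives $v \leq w'$, so $w' \in X\bito Y$. For $X\to Y$: if $w \in X \to Y$ and $w\leq w'$, then every $v \geq w'$ also satisfies $v \geq w$, so $w' \in X\to Y$. It is also worth recording that $X\bito Y = \uparr(X\setminus Y)$ and $X\to Y = W\setminus\downarr(X\setminus Y)$. This matches the formulas of Section~\ref{ssec:dbl-HA} and makes the closure facts transparent.

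\emph{Lattice structure.} $(\mathrm{Up}(W);\cap,\cup,\emptyset,W)$ is a sublattice of the powerset Boolean algebra containing its bounds. Hence it is a bounded distributive lattice whose order is inclusion.

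\emph{Residuation for $\to$.} Let $a,b,c$ be upsets. Suppose $a\cap b\subseteq c$, $w\in b$, $w\leq v$ and $v\in a$. Since $b$ is an upset, $v\in b$, so $v\in a\cap b\subseteq c$; thus $w \in a\to c$. Conversely, suppose $b\subseteq a\to c$ and $w\in a\cap b$. Then $w \in a\to c$, and reflexivity ($w\leq w$) with $w\in a$ gives $w\in c$.

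\emph{Residuation for $\bito$.} We show $c\subseteq a\cup b$ iff $c\bito a\subseteq b$. Suppose $c\subseteq a\cup b$ and $w\in c\bito a$, witnessed by $v\leq w$ with $v\in c\setminus a$. Then $v\in b$, and since $b$ is an upset, $w\in b$. Conversely, suppose $c\bito a\subseteq b$ and $w\in c$ with $w\notin a$. Reflexivity gives $w\in c\bito a\subseteq b$, so $w \in a\cup b$.

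The only points requiring care, rather than a genuine obstacle, are the places where the poset axioms enter. Reflexivity is used in the converse directions of both residuation checks. Transitivity is used for closure of $\to$ and $\bito$ under upsets. The upset property of $b$ is used in the forward directions. Together these establish that $\mo{M}^+$ is a double-Heyting algebra.
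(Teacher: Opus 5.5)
Your proof is correct and takes the same approach as the paper, which leaves this proposition as an exercise based on the Section~\ref{ssec:dbl-HA} observation that the upsets form a bounded distributive lattice satisfying the two residuation equivalences defining double-Heyting algebras. You carry out exactly that verification, including closure of $\mathrm{Up}(W)$ under $\to$ and $\bito$ and both directions of each equivalence.
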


Note that for a specific model $\mo{M}$ the algebra $\mo{M}^+$ can have too
many elements, as not every upset is guaranteed to be a value of a
formula. However, for a frame $\mo{F}$, the upset algebra is exactly what we
need, each upset is a value of a formula in \emph{some} valuation.   

Having thus constructed algebras from frames, we want to go back and construct
frames from algebras. As usual, this is trickier. First, let us recall
the notion of a \emph{prime filter} in a lattice $L$. Namely, a subset $F$
of $L$ is a \emph{filter} if it is a nonempty upset, and moreover
$x,y\in F$ implies $x\wedge y\in F$. A filter $F$ is \emph{prime}
if $x\vee y\in F$ implies $x\in F$ or $y\in F$. 

\begin{definition}\label{def:p-filter-frame}
Let $\mathbf{A}$ be a double-Heyting algebra, and let
$\mathcal{F}_p(\mathbf{A})$ be the set of all prime filters of the underlying
lattice of $\mathbf{A}$. The \emph{prime filter frame} $\mathbf{A}_+$ of
$\mathbf{A}$ is the poset $(\mathcal{F}_p(\mathbf{A}),\subseteq)$.
\end{definition}

Naturally, for any frame $\mo{F}$ we can form the frame $(\mo{F}^+)_+$, and for
any algebra $\mathbf{A}$ we can form the algebra $(\mathbf{A}_+)^+$. For finite
frames and finite algebras, we have a perfect situation:

\begin{proposition}\label{prop:fin-duality}
Let $\mo{F}$ be a finite frame, and let $\mathbf{A}$ be a finite double-Heyting
algebra. Then, $(\mo{F}^+)_+\cong \mo{F}$ and
$(\mathbf{A}_+)^+ \cong \mathbf{A}$.
\end{proposition}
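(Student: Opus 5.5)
We prove the two isomorphisms separately, relying on the fact that in finite distributive lattices prime filters are exactly the principal filters generated by join-irreducible elements. The double-Heyting operations need no extra work: $\to$ and $\bito$ are determined by the lattice order through the residuation laws of Definition~\ref{def:dbl-H}. Hence any lattice isomorphism between double-Heyting algebras is automatically an isomorphism of double-Heyting algebras, and for frames only the order matters.

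For $(\mo{F}^+)_+\cong\mo{F}$, we define $\eta\colon W\to\mathcal{F}_p(\mo{F}^+)$ by $\eta(w)=\{X\in\mathrm{Up}(W): w\in X\}$. This is a prime filter: it contains $W$, is closed under $\cap$ and upward inclusion, and $w\in X\cup Y$ gives $w\in X$ or $w\in Y$.
\begin{itemize}
\item $\eta$ is an order embedding. If $w\le v$, every upset containing $w$ contains $v$, so $\eta(w)\subseteq\eta(v)$. Conversely, if $\eta(w)\subseteq\eta(v)$, then $\uparr\{w\}\in\eta(w)\subseteq\eta(v)$, so $w\le v$. Antisymmetry of $\le$ then gives injectivity.
\item $\eta$ is surjective; this is the only place finiteness is used. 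Let $F$ be a prime filter of $\mo{F}^+$. Since $\mathrm{Up}(W)$ is finite, $X_0=\bigcap F$ lies in $F$. Since $\emptyset\notin F$ (a filter containing $\emptyset$ would be everything, and we take prime filters to be proper), $X_0\neq\emptyset$. We have $X_0=\bigcup_{w\in X_0}\uparr\{w\}$, a finite union, so primeness gives some $\uparr\{w\}\in F$ with $w\in X_0$. Minimality of $X_0$ forces $X_0=\uparr\{w\}$. Therefore $F=\{X: \uparr\{w\}\subseteq X\}=\eta(w)$.
\end{itemize}

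For $(\mathbf{A}_+)^+\cong\mathbf{A}$, we define $\sigma\colon A\to\mathrm{Up}(\mathcal{F}_p(\mathbf{A}))$ by $\sigma(a)=\{F: a\in F\}$. This set is an upset under $\subseteq$.
\begin{itemize}
\item $\sigma$ is a bounded lattice homomorphism: $\sigma(a\wedge b)=\sigma(a)\cap\sigma(b)$ by the filter property, $\sigma(a\vee b)=\sigma(a)\cup\sigma(b)$ by primeness, $\sigma(1)$ is everything and $\sigma(0)=\emptyset$.
\item $\sigma$ is injective. Here we use the finite prime filter theorem: if $a\not\le b$, there is a prime filter containing $a$ but not $b$. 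In the finite case, choose a join-irreducible $j\le a$ with $j\not\le b$; such a $j$ exists because $a$ is the join of the join-irreducibles below it. Then ${\uparrow}j$ is prime, contains $a$, and omits $b$. Consequently $\sigma$ also reflects the order.
\item $\sigma$ is surjective. Every prime filter of a finite lattice has the form ${\uparrow}j$ with $j$ join-irreducible, namely $j=\bigwedge F$, which is join-irreducible by primeness. So $\mathbf{A}_+$ is order-isomorphic to $J(\mathbf{A})^{\mathrm{op}}$, the join-irreducibles with the reversed order. Take an upset $U$ of prime filters and put $a=\bigvee\{j: {\uparrow}j\in U\}$. Then ${\uparrow}k\in\sigma(a)$ iff $k\le a$ iff $k\le j$ for some $j$ with ${\uparrow}j\in U$, using that join-irreducibles in a finite distributive lattice are join-prime. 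This holds iff ${\uparrow}j\subseteq{\uparrow}k$ for such a $j$, which is equivalent to ${\uparrow}k\in U$ because $U$ is an upset. Hence $\sigma(a)=U$.
\end{itemize}

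The main obstacle is this surjectivity of $\sigma$: it needs the Birkhoff-style facts that every element is a join of join-irreducibles and that join-irreducibles are join-prime in distributive lattices. Everything else is routine bookkeeping, ending with the residuation remark at the start to transfer $\to$ and $\bito$.
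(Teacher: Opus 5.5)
The paper states this proposition without proof, presenting it as the standard finite case of Birkhoff/Priestley duality. So there is no argument of the paper's to compare against, and your proof is correct and complete as it stands.

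Two of your choices are worth keeping explicit.

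First, the stipulation that prime filters are proper is genuinely needed. The paper's definition of a filter (a nonempty upset closed under $\wedge$) does not exclude the whole lattice, which would vacuously count as prime. Admitting it would add an extra top point to $(\mo{F}^+)_+$ and make $\sigma(0)$ nonempty, so both isomorphisms would fail. The paper tacitly assumes properness elsewhere too, e.g.\ when it says that $V(\bot)=\emptyset$ can never belong to a filter.

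Second, your residuation shortcut for $\to$ and $\bito$ is valid precisely because $\sigma$ is a lattice \emph{isomorphism}. Compare the general embedding of $\mathbf{A}$ into $(\mathbf{A}_+)^+$ asserted in Proposition~\ref{prop:min-dual-alg}(1), which is not onto. There, preservation of $\to$ and $\bito$ has to be checked directly, since a mere lattice embedding need not preserve residuals. Your argument thereby isolates exactly where finiteness enters: only in the two surjectivity steps.
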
  

For infinite frames and algebras isomorphism no longer holds: algebras have too
many prime filters, and frames have too many upsets. Topology comes to the
rescue, but as we mentioned already, we will not need it.
We will, however, need
the concept of a double-dual model, in Section~\ref{ssec:def-p-filt-ext} and later. 

\begin{definition}
For a model $\mo{M} = (\mo{F}, V)$, with $\mo{F} = (W, \leq)$, the
\emph{double-dual model} $(\mo{M}^+)_+ = \bigl((\mo{F}^+)_+, V^*\bigr)$ of\/
$\mo{M}$, is defined by setting $V^*(p) = \{ w \in \mathcal{F}_p(\mo{F}^+): V(p)
\in w\}$. 
\end{definition}

\begin{lemma}\label{lem:modus-ponens}
Let $\mo{F} = (W, \leq)$. Then, in $(\mo{F}^+)_+$ we have
$$
w\subseteq v \iff \forall x, y \in \mathrm{Up}(W)
(x \rightarrow y \in w \mathbin{\&} x \in v \ \Rightarrow\  y \in v).
$$
\end{lemma}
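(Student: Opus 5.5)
The plan is to prove the two implications separately. Throughout, $w$ and $v$ are prime filters of the lattice $\mathrm{Up}(W)$, and the operations are those of the double-Heyting algebra $\mo{F}^+$.

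\emph{Left to right.} Assume $w\subseteq v$, $x\to y\in w$ and $x\in v$. Then $x\to y\in v$, and since $v$ is a filter it also contains $x\cap(x\to y)$. In $\mo{F}^+$ we have $x\cap(x\to y)\subseteq y$; this follows from the residuation law of Definition~\ref{def:dbl-H} with $x\to y\le x\to y$. Because $v$ is an upset, $y\in v$. This direction uses only that $v$ is a filter; primeness is not needed.

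\emph{Right to left.} Assume the closure condition holds and let $a\in w$; we must show $a\in v$. Take $x=W=1$ and $y=a$. Compute $1\to a=\{u:\forall u'\ge u\,(u'\in a)\}$, which equals $a$ because $a$ is an upset. Hence $1\to a=a\in w$. Also $1\in v$, since every filter is a nonempty upset and so contains the top element. The hypothesis then gives $a\in v$, so $w\subseteq v$.

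I expect no real obstacle. The only points that need checking are the identity $1\to a=a$ for upsets and the fact that prime filters contain $W$. Both are immediate from Definition~\ref{def:upset-alg} and the definition of a filter.
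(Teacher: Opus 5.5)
Your proof is correct and matches the paper's argument. The only differences are minor: you obtain the modus ponens inclusion $x\cap(x\to y)\subseteq y$ from residuation rather than verifying it set-theoretically, and in the converse you take $x=W$ and use $W\to a=a$, where the paper takes an arbitrary $x\in v$ and uses $z\subseteq x\to z$.
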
  

\begin{proof}
First, observe that the following holds for any $x,y\in\mathrm{Up}(W)$:
\begin{equation}\tag{\dag}\label{eq:dag}
\bigl(W\setminus \downarr(x\setminus y)\bigr) \cap x \subseteq y.
\end{equation}
To see it, take
any $a\notin \downarr(x\setminus y)$ such that $a\in x$.
Then $\uparr{a}\cap (x\setminus y) =\emptyset$. Since $a\in x$, this implies
$\uparr{a}\cap (W\setminus y) = \emptyset$, that is,
$\uparr{a}\subseteq y$, hence $a\in y$.

Now, assume $w\subseteq v$ and take $x,y\in\mathrm{Up}(W)$ such that
$x \rightarrow y \in w$ and $x \in v$. By definition of $\rightarrow$,
we have $W\setminus \downarr(x\setminus y)\in w$. So, by assumption
$W\setminus \downarr(x\setminus y)\in v$. Since $v$ is a filter,
$\bigl(W\setminus \downarr(x\setminus y)\bigr) \cap x\in v$ so by~\eqref{eq:dag}
$y\in v$.

Conversely, assume that for all $x, y \in \mathrm{Up}(W)$ such that
$x \rightarrow y \in w$ and $x \in v$ we have $y\in v$.
Take any $z\in w$. Since $w$ is a filter, for an arbitrary $x$ we have
$x\rightarrow z\in w$.  Pick any $x\in v$. Then, by assumption
$z\in v$, proving $w\subseteq v$.
\end{proof}

We end this section by two propositions stating the results we will need in
Section~\ref{sec:Goldblatt-Thomason}.

\begin{proposition}\label{prop:min-dual-frm}
Let $\{\mo{F}_i: i \in I \}\cup\{\mo{G},\mo{H}\}$ be a set of Kripke frames. 
The following hold:  
\begin{enumerate}
\item $\left(\coprod_{i \in I} \mo{F}_i\right)^+ \cong \prod_{i \in I} \mo{F}^+_i$.
\item If\/ $\mo{G}$ is a generated subframe of\/ $\mo{H}$, then
  $\mo{H}^+$ is a homomorphic image of\/ $\mo{G}^+$.
\item If\/ $\mo{G}$ is a bounded-morphic image of\/ $\mo{H}$, then
  $\mo{H}^+$ is a subalgebra of\/ $\mo{G}^+$.  
\end{enumerate}
\end{proposition}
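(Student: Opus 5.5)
The plan is to give explicit maps in each of the three items and check that they preserve the double-Heyting operations. Throughout I use the description of $\to$ and $\bito$ on upsets from Section~\ref{ssec:dbl-HA}, namely $x\to y = W\setminus\downarr(x\setminus y)$ and $x\bito y=\uparr(x\setminus y)$. The lattice operations and constants are set-theoretic, so they are preserved automatically by the maps I use. All the real work is therefore with $\to$ and $\bito$.

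For item (1), write $W=\biguplus W_i$ and define $h\colon \mathrm{Up}(W)\to\prod_i \mathrm{Up}(W_i)$ by $h(X)=(X\cap W_i)_{i\in I}$. A subset of $W$ is an upset exactly when each trace $X\cap W_i$ is an upset of $W_i$, because no order relations hold across components. Hence $h$ is a bijection, with inverse $(X_i)_i\mapsto\bigcup_i X_i$. Upward and downward closures in $W$ are computed componentwise, so $\uparr$, $\downarr$ and $\setminus$ all commute with taking traces. It follows that $h$ preserves $\to$ and $\bito$, and so $h$ is an isomorphism.

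For item (2), let $\mo{G}=(W',\leq')$ be a generated subframe of $\mo{H}=(W,\leq)$. By Lemma~\ref{lem:gen-subfr}, $W'$ is both up- and downward closed in $W$. The map linking the two upset algebras is restriction, $r(X)=X\cap W'$, which sends $\mathrm{Up}(W)$ onto $\mathrm{Up}(W')$. The key step is to show that for $Z\subseteq W$ we have $\downarr(Z)\cap W'=\downarr'(Z\cap W')$, and likewise for $\uparr$. This holds because $W'$ is convex in both directions: any element of $W'$ lying below or above a point of $Z$ forces that point to lie in $W'$. From this, preservation of $\to$ and $\bito$ follows at once. For surjectivity, every upset $Y$ of $W'$ is itself an upset of $W$, since $W'$ is upward closed, and $r(Y)=Y$. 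Thus $r$ is a surjective homomorphism between $\mo{H}^+$ and $\mo{G}^+$, and this is the homomorphic-image relationship between the two upset algebras that item (2) records.

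I must flag a genuine problem with the direction. The map $r$ goes from $\mo{H}^+$ onto $\mo{G}^+$, so it exhibits $\mo{G}^+$ as a homomorphic image of $\mo{H}^+$. I do not see how to obtain the direction as literally printed, with $\mo{H}^+$ an image of $\mo{G}^+$, and it appears false. Take $\mo{G}$ to be a single point inside $\mo{H}$, a two-element antichain. Then $\mo{G}^+$ has two elements and $\mo{H}^+$ has four, so $\mo{H}^+$ cannot be a homomorphic image of $\mo{G}^+$.

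For item (3), let $f\colon\mo{H}\to\mo{G}$ be a surjective bounded morphism. The map is $f^{-1}\colon \mathrm{Up}(\mo{G})\to\mathrm{Up}(\mo{H})$. Preimages of upsets are upsets because $f$ is order-preserving, and surjectivity of $f$ makes $f^{-1}$ injective. The main step is the identity $f^{-1}(\uparr Z)=\uparr f^{-1}(Z)$, which uses (lb), together with its dual $f^{-1}(\downarr Z)=\downarr f^{-1}(Z)$, which uses (ub). Since $f^{-1}$ commutes with $\setminus$, these two identities give preservation of $\to$ and $\bito$. Hence $f^{-1}$ is an embedding, and the image of $\mo{G}^+$ is a subalgebra of $\mo{H}^+$.

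The direction issue repeats here. This construction realises $\mo{G}^+$ inside $\mo{H}^+$, not $\mo{H}^+$ inside $\mo{G}^+$ as printed. The printed direction again appears false: if $\mo{H}$ is a two-element antichain mapped onto a single point $\mo{G}$, then the four-element $\mo{H}^+$ cannot embed into the two-element $\mo{G}^+$. I expect the closure identities for $\uparr$ and $\downarr$ in items (2) and (3) to be the only real checks, and they are short.
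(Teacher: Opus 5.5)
Your proposal is correct. The paper gives no proof of this proposition: it is stated without argument among the ``shadows of duality'', so there is nothing to compare your route against. Your direct verification is the standard one:
\begin{itemize}
\item in (1), traces $X\mapsto (X\cap W_i)_{i\in I}$;
\item in (2), restriction to $W'$, with $\downarr Z\cap W'=\downarr'(Z\cap W')$ using upward closure of $W'$ and $\uparr Z\cap W'=\uparr'(Z\cap W')$ using downward closure;
\item in (3), preimage $f^{-1}$, with $f^{-1}(\uparr Z)=\uparr f^{-1}(Z)$ coming from (lb) and $f^{-1}(\downarr Z)=\downarr f^{-1}(Z)$ coming from (ub).
\end{itemize}
These are exactly the checks needed for $\to$ and $\bito$, and you carry them out correctly.

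Your diagnosis of the directions is also right. As printed, items (2) and (3) are false, and your counterexamples are valid. In a two-element antichain every subset is both up- and down-closed, so a singleton is a generated subframe. The constant map from that antichain onto a one-point frame trivially satisfies (ub) and (lb). In both cases a four-element algebra would have to be a homomorphic image of, or embed into, a two-element one, which is impossible.

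The intended statements are the ones you prove: $\mo{G}^+$ is a homomorphic image of $\mo{H}^+$ in (2), and $\mo{G}^+$ embeds into $\mo{H}^+$ as a subalgebra in (3). The same reversal occurs in the statement of Proposition~\ref{prop:min-dual-alg}(2),(3). Yet the proof of Theorem~\ref{thm:GT} uses the correct directions there: $\mathbf{A}_+$ is a generated subframe of $\mathbf{B}_+$, and $\mathbf{B}_+$ is a bounded-morphic image of $\mathbf{C}_+$. This confirms the printed directions are typos. In any case, only item (1) of the present proposition is invoked later in the paper.
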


\begin{proposition}\label{prop:min-dual-alg}
Let $\mathbf{A}$ and $\mathbf{B}$ be 
double-Heyting algebras. The following hold:
\begin{enumerate}
\item $\mathbf{A}$ is a subalgebra of $(\mathbf{A}_+)^+$.
\item If\/ $\mathbf{A}$ is a homomorphic image of \/ $\mathbf{B}$, then
  $\mathbf{B}_+$ is a generated subframe of\/ $\mathbf{A}_+$.
\item If\/ $\mathbf{A}$ is a subalgebra of\/ $\mathbf{B}$, then
  $\mathbf{B}_+$ is a bounded-morphic image of\/ $\mathbf{A}_+$.  
\end{enumerate}
\end{proposition}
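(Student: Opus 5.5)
The plan is to work entirely through the canonical maps between algebras and prime filter frames, namely $a\mapsto \widehat{a}=\{F\in\mathcal{F}_p(\mathbf{A}): a\in F\}$ and, for a homomorphism $h$, the inverse-image map $F\mapsto h^{-1}[F]$. Each item will be reduced to a standard prime filter argument. The only nontrivial ingredient is the prime filter theorem for distributive lattices: if a filter is disjoint from an ideal, it extends to a prime filter disjoint from that ideal. I will also use the residuation laws of Definition~\ref{def:dbl-H}.

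\textbf{Item 1.} I check that $a\mapsto\widehat{a}$ is an injective double-Heyting homomorphism $\mathbf{A}\to(\mathbf{A}_+)^+$.
\begin{itemize}
\item Each $\widehat a$ is an upset of $(\mathcal{F}_p(\mathbf{A}),\subseteq)$.
\item Meets, joins and the constants are preserved because the filters are prime.
\item Injectivity: if $a\not\le b$, extend ${\uparrow}a$ to a prime filter avoiding ${\downarrow}b$.
\item Preservation of $\to$: the inclusion $\widehat{a\to b}\subseteq \widehat a\to\widehat b$ follows from $a\wedge(a\to b)\le b$. Conversely, if $a\to b\notin F$, the filter generated by $F\cup\{a\}$ avoids $b$ by residuation, and a prime extension $G\supseteq F$ with $a\in G$, $b\notin G$ witnesses $F\notin\widehat a\to\widehat b$.
\item Preservation of $\bito$ is dual. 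If $a\bito b\in F$, then the ideal generated by $(\mathbf{A}\setminus F)\cup\{b\}$ must avoid $a$. Otherwise $a\le b\vee c$ with $c\notin F$, whence $a\bito b\le c$ by the second residuation law, contradicting $a\bito b\in F$. A prime filter $G\subseteq F$ with $a\in G$, $b\notin G$ is then obtained from the prime filter theorem applied to ${\uparrow}a$ and this ideal. The reverse inclusion uses $a\le b\vee(a\bito b)$.
\end{itemize}

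\textbf{Items 2 and 3.} Here I take the statement in the sense fixed by the duality of Proposition~\ref{prop:min-dual-frm}: the frames are identified along the dual map $h_+\colon F\mapsto h^{-1}[F]$, which reverses the direction of the algebraic relation.

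For item~2, let $h\colon\mathbf{B}\twoheadrightarrow\mathbf{A}$ be onto. The map $h_+$ sends prime filters to prime filters and is an order embedding, because $h$ is surjective. I would show its image is both an upset and a downset, which by Lemma~\ref{lem:gen-subfr} yields the generated subframe relation asserted between the two prime filter frames.
\begin{itemize}
\item Write $\theta=\ker h$. A prime filter $G$ of $\mathbf{B}$ lies in the image iff it is a union of $\theta$-classes.
\item Let $G$ be such a filter and $G\subseteq G'$. Take $x\in G'$ and $x\,\theta\,y$. Then $x\to y\ \theta\ 1$ because $h(x\to y)=h(x)\to h(x)=1$. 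Since $1\in G\subseteq G'$ and $G'$ is $\theta$-saturated on this element class, $x\to y\in G'$, and modus ponens gives $y\in G'$.
\item For $G'\subseteq G$, the same argument runs dually with $\bito$: here $y\bito x\ \theta\ 0$, and $0\notin G$.
\end{itemize}
The key equivalence is that a filter is $\theta$-saturated iff it contains the filter $h^{-1}(1)$ and omits the ideal $h^{-1}(0)$. Both conditions pass upward and downward thanks to the operations $\to$ and $\bito$, respectively.

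For item~3, let $\mathbf{A}\le\mathbf{B}$ with inclusion $h$, so that $h_+(G)=G\cap A$. This map is clearly order-preserving.
\begin{itemize}
\item Surjectivity: given a prime filter $F$ of $\mathbf{A}$, extend the filter it generates in $\mathbf{B}$ to a prime filter avoiding the ideal generated by $A\setminus F$.
\item (ub): given $G$ and $G\cap A\subseteq F'$, extend the filter generated by $G\cup F'$ to a prime filter avoiding $A\setminus F'$. The check that these are disjoint uses closure of $A$ under $\to$: if $g\wedge f\le c$ with $c\in A\setminus F'$, then $g\le f\to c\in A$. So $f\to c\in G\cap A\subseteq F'$, forcing $c\in F'$, a contradiction.
\item (lb) is the order dual, using closure of $A$ under $\bito$.
\end{itemize}

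I expect the main obstacle to be the (lb) clause in item~3, together with the downward half of item~2. These are where $\bito$ must be used in the ideal-extension form of the prime filter theorem, and where care with the order of the residuation $x\vee y\ge z\iff y\ge z\bito x$ is needed. I would do these steps first, writing out the ideal explicitly as generated by $(B\setminus G)\cup(A\setminus F')$-type sets and verifying disjointness via $a\le b\vee c \Rightarrow a\bito b\le c$.
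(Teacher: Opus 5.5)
Your arguments are correct. They are the standard prime-filter proofs; the paper gives no proof of this proposition and treats it as known duality for double-Heyting algebras.

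The real problem is what you say you have proved in items 2 and 3. Your arguments establish two things:
\begin{itemize}
\item for $h\colon\mathbf{B}\twoheadrightarrow\mathbf{A}$, the map $h_+$ embeds $\mathbf{A}_+$ onto an up- and down-closed subset of $\mathbf{B}_+$, so $\mathbf{A}_+$ is (isomorphic to) a generated subframe of $\mathbf{B}_+$;
\item for $\mathbf{A}\le\mathbf{B}$, the map $G\mapsto G\cap A$ is a surjective bounded morphism $\mathbf{B}_+\to\mathbf{A}_+$, so $\mathbf{A}_+$ is a bounded-morphic image of $\mathbf{B}_+$.
\end{itemize}
The printed items assert the opposite roles, and in that form they are false. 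Take $\mathbf{B}=\mathbf{2}\times\mathbf{2}$ and $\mathbf{A}=\mathbf{2}$, which is both a homomorphic image of $\mathbf{B}$ (a projection) and a subalgebra of it (via $\{0,1\}$). Then $\mathbf{B}_+$ is a two-element antichain and $\mathbf{A}_+$ is a single point. So $\mathbf{B}_+$ is neither a generated subframe nor a bounded-morphic image of $\mathbf{A}_+$.

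Your directions are the correct ones. They are also exactly what the proof of Theorem~\ref{thm:GT} uses (``$\mathbf{A}_+$ is a generated subframe of $\mathbf{B}_+$'', ``$\mathbf{B}_+$ is a bounded-morphic image of $\mathbf{C}_+$''). So do not claim your argument ``yields the generated subframe relation asserted.'' Say explicitly that the statement has the frames swapped and that you prove the corrected form. Appealing to ``the sense fixed by Proposition~\ref{prop:min-dual-frm}'' does not resolve this, because items 2 and 3 there are reversed in the same way. For instance, it is $\mathbf{G}^+$ that is a homomorphic image of $\mathbf{H}^+$, not the other way round.

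Three smaller points:
\begin{itemize}
\item In item~1, $\widehat{0}=\emptyset$ needs prime filters to be \emph{proper}. The paper's definition omits this, but it is tacitly assumed (e.g.\ in Proposition~\ref{inpe}), and you should state it.
\item In the upward step of item~2, it is the saturation of $G$, not of $G'$, that puts $x\to y$ into $G$ and hence into $G'$.
\item In the downward step, the element to use is $x\bito y$, not $y\bito x$. From $x\le y\vee(x\bito y)$, $x\in G'$ and primality of $G'$, you get $y\in G'$ or $x\bito y\in G'\subseteq G$. The latter is impossible, since $x\bito y\mathrel{\theta}0$ and $G$ is saturated and proper. Note that this step, unlike the upward one, genuinely uses primality of $G'$.
\end{itemize}

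A cleaner way to organise item~2: for a proper prime filter, ``contains $h^{-1}(1)$'' and ``omits $h^{-1}(0)$'' are \emph{each} equivalent to $\theta$-saturation, the first via $\to$ and the second via $\bito$ plus primality. The first property passes upward trivially and the second passes downward trivially, which gives both closure conditions at once.
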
  

The double dual algebra $(\mathbf{A}_+)^+$  is known as the
\emph{canonical extension} of $\mathbf{A}$.

\subsection{Directed bisimulations}\label{ssec:dir-bisim}

The relations (directed bisimulations) that we introduce in this section originally appeared in~\cite{kurto} in a different form. The current version has essentially been used in~\cite{badia, olk}.

\begin{definition}
  Let $\mo{M} = (W, \leq, V)$ and $\mo{M}' = (W', \leq', V')$ be two
  Kripke models. A \emph{directed bisimulation} between $\mo{M}$ and $\mo{M}'$
  is a pair $(Z_1, Z_2)$ of relations $Z_1 \subseteq W \times W'$ and
  $Z_2 \subseteq W' \times W$ such that:
  \begin{enumerate}
  \renewcommand{\theenumi}{D$_{\arabic{enumi}}$}
  \renewcommand{\labelenumi}{\textup{(}\theenumi\textup{)} }
    \item \label{mt:it:dbisim-prop}
          if $wZ_1w'$ and $w \in V(p)$, then $w' \in V'(p)$, for all $p \in \Prop$;
    \item \label{mt:it:dbisim-prop2}
          if $w'Z_2w$ and $w' \in V'(p)$, then $w \in V(p)$, for all $p \in \Prop$;
    \item \label{mt:it:dbisim-f1}
          if $wZ_1w'$ and $w' \leq' v'$, then $\exists v \in W$ such that
          $w \leq v$ and $vZ_1v'$ and $v'Z_2v$; 
    \item \label{mt:it:dbisim-b1}
          if $w'Z_2w$ and $w \leq v$, then $\exists v' \in W'$ such that
          $w' \leq' v'$ and $v'Z_2v$ and $vZ_1v'$;
    \item \label{mt:it:dbisim-f2}
          if $wZ_1w'$ and $v \leq w$, then $\exists v' \in W'$ such that
          $v' \leq' w'$ and $vZ_1v'$ and $v'Z_2v$; 
    \item \label{mt:it:dbisim-b2}
          if $w'Z_2w$ and $v' \leq' w'$, then $\exists v \in W$ such that
          $v \leq w$ and $v'Z_2v$ and $vZ_1v'$.
  \end{enumerate}
  Two worlds $w \in W$ and $w' \in W'$ are called \emph{directed bisimilar}
  if there exists a directed bisimulation $(Z_1, Z_2)$ from $\mo{M}$
  to $\mo{M}'$ such that $wZ_1w'$.
  We denote this by $\mo{M}, w \dbisim \mo{M}', w'$.
\end{definition}

Directed bisimilar states preserve truth,
as shown in the following proposition. (For a proof see~\cite{badia}.)

\begin{proposition}
  Let $\mo{M} = (W, \leq, V)$ and $\mo{M}' = (W', \leq', V')$ be
  Kripke models, $w \in W$ and $w' \in W'$.
 If $\mo{M}, w \dbisim \mo{M}', w'$, then  for any formula  $\phi \in \biLan$ we have that $      \mo{M}, w \Vdash \phi  $ only if\/ $\mo{M}', w' \Vdash \phi $.
\end{proposition}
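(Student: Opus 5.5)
The plan is a proof by induction on the complexity of $\phi$, applied simultaneously to the pair of relations. The statement I prove by induction is slightly stronger than the proposition:
\begin{itemize}
\item[(a)] if $wZ_1w'$ and $\mo{M}, w \Vdash \phi$, then $\mo{M}', w' \Vdash \phi$;
\item[(b)] if $w'Z_2w$ and $\mo{M}', w' \Vdash \phi$, then $\mo{M}, w \Vdash \phi$.
\end{itemize}
The proposition is then clause (a) for the given directed bisimulation $(Z_1,Z_2)$ witnessing $wZ_1w'$. Both directions are needed, because the clauses for $\to$ and $\bito$ involve negated subformulas. Preservation of a negated subformula along $Z_1$ is, contrapositively, reflection along the converse direction, and that is exactly what (b) supplies.

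The base cases are immediate. For proposition letters, (a) is \eqref{mt:it:dbisim-prop} and (b) is \eqref{mt:it:dbisim-prop2}. The cases $\top$ and $\bot$ are trivial, and $\wedge$, $\vee$ follow directly from the induction hypothesis at the same pair of worlds.

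For $\phi\to\psi$ in case (a), suppose $wZ_1w'$, $\mo{M},w\Vdash\phi\to\psi$, and let $w'\leq' v'$ with $\mo{M}',v'\Vdash\phi$. By \eqref{mt:it:dbisim-f1} there is $v\geq w$ with $vZ_1v'$ and $v'Z_2v$. The hypothesis (b) for $\phi$, applied to $v'Z_2v$, gives $\mo{M},v\Vdash\phi$. Since $w\leq v$, we get $\mo{M},v\Vdash\psi$, and (a) for $\psi$ via $vZ_1v'$ gives $\mo{M}',v'\Vdash\psi$. Case (b) is symmetric: it uses \eqref{mt:it:dbisim-b1}, then (a) for $\phi$ to pull $\phi$ back, and (b) for $\psi$ to push $\psi$ across.

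For $\phi\bito\psi$ in case (a), suppose $wZ_1w'$ and pick a witness $v\leq w$ with $\mo{M},v\Vdash\phi$ and $\mo{M},v\nVdash\psi$. Condition \eqref{mt:it:dbisim-f2} gives $v'\leq' w'$ with $vZ_1v'$ and $v'Z_2v$. Then (a) for $\phi$ yields $\mo{M}',v'\Vdash\phi$. If $\mo{M}',v'\Vdash\psi$ held, then (b) for $\psi$ along $v'Z_2v$ would give $\mo{M},v\Vdash\psi$, a contradiction. So $v'$ witnesses $\phi\bito\psi$ at $w'$. Case (b) is the mirror argument using \eqref{mt:it:dbisim-b2}.

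The only delicate point is organising the induction so that (a) and (b) are proved together for every subformula. Once that is in place, each clause of the definition supplies exactly the back-and-forth links needed: the crossed relation $v'Z_2v$ or $vZ_1v'$ handles the antecedent of $\to$ and the negated part of $\bito$. No step is a genuine obstacle beyond matching each connective with the right clause among \eqref{mt:it:dbisim-f1}--\eqref{mt:it:dbisim-b2}.
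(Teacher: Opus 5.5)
Your proof is correct. Proving clauses (a) and (b) simultaneously by induction on $\phi$ is the standard argument for this fact: D$_3$/D$_4$ handle $\to$, D$_5$/D$_6$ handle $\bito$, and the crossed pair carries the antecedent or the negated subformula. The paper gives no proof of its own here and simply defers to \cite{badia}.
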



Bounded morphisms give rise to directed bisimulations.

\begin{proposition}\label{bmdr}
Let $\mo{M}_1 = (W_1, \leq_1, V_1)$ and $\mo{M}_2 = (W_2, \leq_2, V_2)$ be two
Kripke models for bi-intuitionistic logic. Furthermore, let $f: W_1
\longrightarrow W_2$ be a bounded morphism. Then, the pair $\langle Z_1,
Z_2\rangle$ is a  directed bisimulation, where: 
\begin{align*}
xZ_1y &\iff f(x) \leq_2 y,\\
xZ_2y &\iff x \leq_2 f(y).
\end{align*}
Moreover, if  $f$ is surjective,  then the  directed bisimulation  $\langle Z_1,
Z_2\rangle$ is surjective with respect to both   $\mo{M}_1$ and  $\mo{M}_2$.
\end{proposition}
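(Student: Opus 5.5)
The plan is to verify the six clauses \textup{(}D$_1$\textup{)}--\textup{(}D$_6$\textup{)} directly from the definitions, and then to check the surjectivity claim separately. Two facts will be used throughout. First, reflexivity of $\leq_2$ gives $x Z_1 f(x)$ and $f(x) Z_2 x$ for every $x\in W_1$. Second, the valuations are upsets. The choice of witnesses is forced. For the clauses where we move upward from $W_2$ (D$_3$) or downward from $W_2$ (D$_6$), I would use the back conditions (ub) and (lb) of Definition~\ref{mt:def:bd-mor}. For the clauses where we move within $W_1$ (D$_4$, D$_5$), I would take the image under $f$ of the given point.

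\emph{Atomic clauses.} For (D$_1$), suppose $f(x)\leq_2 y$ and $x\in V_1(p)$. Then (prop) gives $f(x)\in V_2(p)$, and since $V_2(p)$ is an upset, $y\in V_2(p)$. For (D$_2$), suppose $y\leq_2 f(x)$ and $y\in V_2(p)$. Since $V_2(p)$ is an upset, $f(x)\in V_2(p)$, and (prop) gives $x\in V_1(p)$.

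\emph{Clauses D$_3$ and D$_6$ (via the back conditions).} For (D$_3$), suppose $f(x)\leq_2 y$ and $y\leq_2 v'$. Then $f(x)\leq_2 v'$, so (ub) yields $v\geq_1 x$ with $f(v)=v'$. Both $vZ_1v'$ and $v'Z_2v$ then hold by reflexivity. For (D$_6$), suppose $y\leq_2 f(x)$ and $v'\leq_2 y$. Then $v'\leq_2 f(x)$, so (lb) yields $v\leq_1 x$ with $f(v)=v'$, and again both required relations hold by reflexivity.

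\emph{Clauses D$_4$ and D$_5$ (via order preservation).} For (D$_4$), suppose $y\leq_2 f(x)$ and $x\leq_1 v$. Put $v'=f(v)$. Monotonicity of $f$ gives $y\leq_2 f(x)\leq_2 f(v)$, and $v'Z_2v$, $vZ_1v'$ hold trivially. For (D$_5$), suppose $f(x)\leq_2 y$ and $v\leq_1 x$. Put $v'=f(v)$; then $v'\leq_2 f(x)\leq_2 y$, and the two relations again hold trivially.

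\emph{Surjectivity.} Since $xZ_1f(x)$ and $f(x)Z_2x$, both relations are total on $W_1$. If $f$ is onto, each $y\in W_2$ equals $f(x)$ for some $x$. Then $xZ_1y$ and $yZ_2x$, so both relations are also total on $W_2$.

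There is no real obstacle here. The only points needing care are matching each clause to the right property of $f$, and using the upset property of valuations in (D$_1$) and (D$_2$), since $Z_1$ and $Z_2$ relate $x$ to more points than just $f(x)$.
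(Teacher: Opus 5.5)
Your proposal is correct and follows the paper's proof essentially step for step: the back conditions (ub) and (lb) supply the witnesses for (D$_3$) and (D$_6$), order preservation with the witness $f(v)$ handles (D$_4$) and (D$_5$), reflexivity of $\leq_2$ gives the required $Z_1$ and $Z_2$ links, and surjectivity follows from $xZ_1f(x)$ and $f(x)Z_2x$ together with ontoness of $f$. You are slightly more careful than the paper: you write out the cases it dismisses as ``similar'', and you make explicit that the atomic clauses need (prop) in addition to the upset property of valuations.
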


\begin{proof} 
Let  $i, j \in \{1, 2\}$, $i \neq j$. First, if $xZ_i y$, we immediately have
$\mo{M}_i, x \Vdash p$ only if $\mo{M}_j, y \Vdash p$ by persistence of atomic
formulas across the partial ordering $\leq_i$. 

If $xZ_1y$ (i.e. $f(x) \leq_2 y$) and $y\leq_2b$ for some $b\in W_2$, then
$f(x) \leq_2 b$ and we have that there exists $b^{\prime}\in W_1$ such that
$x\leq_1 b^{\prime}$ and $f(b^{\prime}) = b$  by properties of bounded
morphisms. Thus $bZ_2 b^{\prime}$ since  $b \leq_2 f(b^{\prime}) $  and
$b^{\prime}Z_1b$ as  $f(b^{\prime}) \leq_2 b$.

If $xZ_1y$ (i.e. $f(x) \leq_2 y$) and $b\leq_1x$ for some $b\in W_1$, so
$f(b)\leq_2f(x)$ by properties of bounded morphisms, and thus $f(b) \leq_2
y$. But then  $bZ_1 f(b)$ since $f(b)\leq_1 f(b)$ and $f(b)Z_2b$ given that
$f(b)\leq_1 f(b)$.  
 
The remaining conditions are shown similarly. Finally, suppose that  $f$ is
surjective. Then if $y\in W_1$, we have that $f(y)Z_2y$ since $f(y) \leq_2
f(y)$. Moreover, if $y\in W_2$ there must be $x$ such that $f(x)=y$ by
surjectivity, so  $f(x) \leq_2 y$ and hence $xZ_1y$ as desired. 
\end{proof}

\begin{proposition}\label{rdbp}  Let $\mo{M} = (W, \leq, V)$ and $\mo{M}' = (W',
  \leq', V')$ be two 
  Kripke models and  $\langle Z_1, Z_2 \rangle$
  \textup{(}where $Z_1\subseteq W\times
  W'$ and $Z_2\subseteq W'\times W$\textup{)} be a  directed bisimulation surjective
  with respect to $\mo{M}'$. Then $\mo{M} \Vdash \phi$ only if\/ $\mo{M}' \Vdash \phi$,
  for every formula $\phi$ of\/ $\biLan$.
\end{proposition}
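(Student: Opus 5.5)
We argue contrapositively, using the preservation proposition for directed bisimilar states stated just above. Suppose $\mo{M} \Vdash \phi$ and take an arbitrary $w' \in W'$; we must show $\mo{M}', w' \Vdash \phi$. The task is to find a world $w \in W$ with $\mo{M}, w \dbisim \mo{M}', w'$, that is, $w Z_1 w'$. Once we have it, $\mo{M}, w \Vdash \phi$ holds because $\phi$ is globally true in $\mo{M}$. The preservation proposition then gives $\mo{M}', w' \Vdash \phi$.

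Everything depends on what ``surjective with respect to $\mo{M}'$'' means. The last part of Proposition~\ref{bmdr} indicates the intended reading: every $y \in W'$ lies in the range of $Z_1$, i.e.\ there is $x \in W$ with $x Z_1 y$. The other half (``surjective with respect to $\mo{M}$'') concerns $Z_2$ covering $W$. Under this reading the world $w$ is supplied directly by the hypothesis, and the argument above is complete.

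The main obstacle is a less convenient reading of surjectivity, where only $Z_2$ is assumed total, i.e.\ each $w' \in W'$ has some $w$ with $w' Z_2 w$. In that case we first turn a $Z_2$-link into a $Z_1$-link. Given $w' Z_2 w$ and the trivial $w' \leq' w'$, condition~(\ref{mt:it:dbisim-b2}) yields some $v \leq w$ with $w' Z_2 v$ and $v Z_1 w'$. In particular $v Z_1 w'$, so $w'$ is again in the range of $Z_1$. Either way we obtain a $w$ with $wZ_1w'$, and the preservation result finishes the proof. Since $w'$ was arbitrary, $\mo{M}' \Vdash \phi$.
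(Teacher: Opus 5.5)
Your proof is correct and is essentially the paper's argument. Surjectivity with respect to $\mo{M}'$ gives, for each $w'\in W'$, some $w$ with $wZ_1w'$, and preservation of $\biLan$-formulas along directed bisimulations then transfers $\phi$ from $w$ to $w'$. The paper runs this contrapositively, whereas you actually argue directly despite announcing a contrapositive. Your side remark is also correct: mere totality of $Z_2$ on $W'$ already yields such a $w$ via (\ref{mt:it:dbisim-b2}) with $v'=w'$. It is unnecessary, though, since Proposition~\ref{bmdr} fixes the $Z_1$ reading of surjectivity.
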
 
  
\begin{proof} This is a corollary of the preservation under  directed
  bisimulations of the formulas of $\biLan$. Suppose that $\mo{M}' \not \Vdash
  \phi$, so there is $w' \in W'$ such that $\mo{M}', w' \not \Vdash \phi$. Given
  that we have assumed $\langle Z_1, Z_2 \rangle$ to be surjective with respect
  to $\mo{M}'$, there must be $w\in W$ such that $wZ_1w'$. By the preservation
  of the formulas of $\biLan$ under directed bisimulations, we must have that
  $\mo{M}, w \not \Vdash \phi$ and thus $\mo{M} \not \Vdash \phi$. 
\end{proof}

Let us write $\biItp_\mo{M}(e)$ for the `bi-intuitionistic type' of a point
$e$ in a model $\mo{M}$, i.e., the set of all first order translations of
bi-intuitionistic formulas that $e$ satisfies. If $\mo{M}_1, \mo{M}_2$ are
models, we will write $\mo{M}_1, w \Rrightarrow_\biLan \mo{M}_2, v$ if
$\mo{M}_1, w \Vdash \phi$ implies $\mo{M}_2, v \Vdash \phi$, for
every formula $\phi$ of $\biLan$. If $\mo{M}_1, w \Rrightarrow_\biLan
\mo{M}_2, v$ and $\mo{M}_2, v \Rrightarrow_\biLan \mo{M}_1, w$ both hold, we
write $\mo{M}_1, w \equiv_\biLan \mo{M}_2, v$ and say that
$\mo{M}_1, w$ and $\mo{M}_2, v$ are directed bisimulation equivalent.
Clearly, if $\mo{M}_1, w \equiv_\biLan \mo{M}_2, v$ then they satisfy
precisely the same bi-intuitionistic formulas.

\begin{proposition} \label{pro:hm}  Let $\mo{M}_1$ and $\mo{M}_2$ be two Kripke
models. Suppose that $\mo{M}_1$ and $\mo{M}_2$ are $\omega$-saturated as first
order models.  Then the relation $\Rrightarrow_\biLan$ induces a directed
bisimulation $\langle Z_1, Z_2 \rangle$ between $\mo{M}_1$  and $\mo{M}_2$
defined as follows:  
\begin{align*}
xZ_1 y &\iff \biItp_{\mo{M}_1}(x) \subseteq \biItp_{\mo{M}_2}(y),\\
xZ_2 y &\iff \biItp_{\mo{M}_2}(x) \subseteq \biItp_{\mo{M}_1}(y).
\end{align*}
\end{proposition}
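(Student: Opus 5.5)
We verify the six clauses (D$_1$)--(D$_6$) for $\langle Z_1,Z_2\rangle$ as defined. By symmetry between $\mo{M}_1$ and $\mo{M}_2$, and between the pairs (D$_3$)/(D$_4$) and (D$_5$)/(D$_6$), it suffices to handle (D$_1$), (D$_3$) and (D$_5$) in detail. The other clauses follow by exchanging the roles of the two models.

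For (D$_1$): if $xZ_1y$ and $x\in V_1(p)$, then $Px\in\biItp_{\mo{M}_1}(x)\subseteq\biItp_{\mo{M}_2}(y)$, so $y\in V_2(p)$.

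\textbf{Clause (D$_3$).} Assume $xZ_1y$ and $y\leq_2 v'$. We need $v$ with $x\leq_1 v$ and $\biItp_{\mo{M}_1}(v)=\biItp_{\mo{M}_2}(v')$, since $vZ_1v'$ together with $v'Z_2v$ amounts to equality of types. Write $T=\biItp_{\mo{M}_2}(v')$ and $\overline{T}$ for the set of translations of formulas false at $v'$. Consider the set of first-order formulas in one variable $z$, with a constant for $x$:
$$\Phi(z)=\{Rxz\}\cup T(z)\cup\{\neg\psi(z):\psi\in\overline{T}\}.$$
By $\omega$-saturation of $\mo{M}_1$ (one parameter), it suffices to show that $\Phi$ is finitely satisfiable in $\mo{M}_1$. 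Take finitely many $\phi_1,\dots,\phi_n\in T$ and $\psi_1,\dots,\psi_m\in\overline{T}$. Let $\phi=\bigwedge\phi_i$ and $\psi=\bigvee\psi_j$, using $\top$ and $\bot$ for empty cases. Then $v'\Vdash\phi$ and $v'\nVdash\psi$, so $y\nVdash\phi\to\psi$. Since $xZ_1y$, the type inclusion gives $x\nVdash\phi\to\psi$: if $x$ forced $\phi\to\psi$, then $y$ would as well. Hence some $v\geq_1 x$ satisfies $\phi$ and fails every $\psi_j$, which is the required finite witness.

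\textbf{Clause (D$_5$).} Assume $xZ_1y$ and $v\leq_1 x$. Dually, we seek $v'\leq_2 y$ with the same type as $v$, now realising in $\mo{M}_2$, with $y$ as the parameter, the set
$$\{Rzy\}\cup\biItp_{\mo{M}_1}(v)(z)\cup\{\neg\psi(z):\psi\notin\biItp_{\mo{M}_1}(v)\}.$$
For finite $\phi,\psi$ as before, $v\Vdash\phi$ and $v\nVdash\psi$ give $x\Vdash\phi\bito\psi$. Since $xZ_1y$, also $y\Vdash\phi\bito\psi$, which supplies a predecessor of $y$ witnessing the finite subset. Saturation of $\mo{M}_2$ then finishes.

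\textbf{Main obstacle.} The only delicate step is the compactness argument: we must pack the finitely many positive and negative requirements into a single $\to$- or $\bito$-formula whose truth transfers along the type inclusion. The direction of transfer matters. For (D$_3$) we use the contrapositive, carrying the failure of $\phi\to\psi$ from $y$ back to $x$; for (D$_5$) we carry the truth of $\phi\bito\psi$ forward from $x$ to $y$. In each case, requiring both $Z_1$ and $Z_2$ at the new pair is exactly what forces us to demand full type equality.
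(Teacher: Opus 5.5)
Your proof is correct and follows the paper's proof: reduce by symmetry to one model direction, then realise the full bi-intuitionistic type of the target point via $\omega$-saturation, packing finitely many positive and negative requirements into $\phi\to\psi$ (transferred contrapositively) for the upward clause and into $\phi\bito\psi$ (transferred forward) for the downward clause. Putting $Rxz$ (respectively $Rzy$) explicitly into the type to be realised is a small improvement in rigour over the paper's informal phrase ``satisfiable by an element $b'$ with $x\leq_i b'$''.
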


\begin{proof} In what follows let $\{i, j\} = \{1, 2\}$. By definition, if $xZ_i
  y$, i.e., $\biItp_{\mo{M}_i}(x) \subseteq \biItp_{\mo{M}_j}(y)$, we have that
  $\mo{M}_i, w \Vdash \phi$ only if $\mo{M}_j, u \Vdash \phi$  for every formula
  $\phi$ of $\biLan$, and, in particular, $\mo{M}_i, w \Vdash p$ only if
  $\mo{M}_j, u \Vdash p$ for every propositional variable.

Now suppose that $xZ_iy$, i.e., $\biItp_{\mo{M}_i}(x) \subseteq
\biItp_{\mo{M}_j}(y)$, and  $y\leq_j b$ for some $b $ of $ \mo{M}_j$. Consider 
$$
\nbiItp_{\mo{M}_j}(y) = \{\neg ST_x(\psi) :  \mo{M}_j, y \nVdash \psi, \psi \in\biLan\}.
$$
We claim that the set of formulas
$\biItp_{\mo{M}_j}(b) \cup \nbiItp_{\mo{M}_j}(b)$ is satisfiable in $\mo{M}_i$
by an element $b^{\prime}$ such that $x\leq_i b^{\prime}$. Take any finite
subset $S$ of $\biItp_{\mo{M}_j}(b) \cup \nbiItp_{\mo{M}_j}(b)$. Say,
$\{ST_z(\delta_{1}), \dots, ST_z(\delta_{n})\} \subseteq \biItp_{\mo{M}_j}(b)$,
$\{ \neg ST_z(\sigma_{1}), \dots , \neg ST_z (\sigma_{m})\} \subseteq
\nbiItp_{\mo{M}_j}(b)$, and
$$
S = \{ST_z(\delta_{1}), \dots , ST_z(\delta_{n})\} \cup \{ \neg ST_z(\sigma_{1}),
\dots , \neg ST_z (\sigma_{m})\}.
$$ 
It is then clear that
$\mo{M}_j, y \nVdash \bigwedge \{\delta_{1}, \dots , \delta_{n}\} \rightarrow
\bigvee \{ \sigma_{1}, \dots , \sigma_{m}\}$, and thus,
$\mo{M}_j \nvDash ST_z(\bigwedge \{\delta_{1}, \dots , \delta_{n}\} \rightarrow
\bigvee \{ \sigma_{1}, \dots , \sigma_{m}\}) [y]$. Given that
$\biItp_{\mo{M}_i}(x) \subseteq \biItp_{\mo{M}_j}(y)$, we get
$\mo{M}_i \nvDash ST_z(\bigwedge \{\delta_{1}, \dots , \delta_{n}\} \rightarrow
\bigvee \{ \sigma_{1}, \dots , \sigma_{m}\}) [x]$. It follows that
$S$ is satisfiable in $\mo{M}_i$ by an element
$b_0$ such that $x\leq_ib_0$. By the $\omega$-saturation of $\mo{M}_i$, there must
be an element $b^{\prime}$ such that $x\leq_i b^{\prime}$ realising the whole of
$\biItp_{\mo{M}_j}(b) \cup \nbiItp_{\mo{M}_j}(b)$. Since $\biItp_{M_j}(b)$ is
realised by $b^{\prime}$, we have that $bZ_j b^{\prime}$. Since $b^{\prime}$
realises $\nbiItp_{\mo{M}_j}(b)$, that is, $\mo{M}_j, b \nVdash \psi $ only if
$\mo{M}_i, b^{\prime} \nVdash \psi $, by contraposing we obtain that
$\biItp_{\mo{M}_i}(b^{\prime}) \subseteq \biItp_{\mo{M}_j}(b)$, that is,
$b^{\prime}Z_i b$.

On the other hand,  suppose that $xZ_iy$, that is, $\biItp_{\mo{M}_i}(x) \subseteq
\biItp_{\mo{M}_j}(y)$, and  $b\leq_ix$, for some $b$ from $\mo{M}_i$.  
We claim that the set of formulas $\biItp_{\mo{M}_i}(b) \cup
\nbiItp_{\mo{M}_i}(b)$ is satisfiable in $\mo{M}_j$ by an element $b^{\prime}$
such that $b^{\prime}\leq_j y$. As before, take any finite subset $S$ of
$\biItp_{\mo{M}_i}(b) \cup \nbiItp_{\mo{M}_i}(b)$. Say,
$\{ST_z(\delta_{1}), \dots , ST_z(\delta_{n})\}  \subseteq
\biItp_{\mo{M}_i}(b)$, $\{  \neg ST_z(\sigma_{1}), \dots , \neg  ST_z
(\sigma_{m})\} \subseteq \nbiItp_{\mo{M}_i}(b)$, and
$$
S = \{ST_z(\delta_{1}), \dots ,
ST_z(\delta_{n})\} \cup \{  \neg ST_z(\sigma_{1}), \dots ,  \neg ST_z
(\sigma_{m})\}.
$$
It is clear that  $\mo{M}_i, x
\Vdash \bigwedge \{\delta_{1}, \dots , \delta_{n}\} \bito \bigvee \{ \sigma_{1},
\dots ,  \sigma_{m}\}$,  
so given that $\biItp_{\mo{M}_i}(x) \subseteq \biItp_{\mo{M}_j}(y)$, we obtain
$\mo{M}_j, y \Vdash  \bigwedge \{\delta_{1}, \dots , \delta_{n}\} \bito \bigvee \{
\sigma_{1}, \dots ,  \sigma_{m}\}$. It follows that $S$
is satisfiable in $\mo{M}_j$ by an element $b_0$ such that
$b_0\leq_j y$. By the $\omega$-saturation of $\mo{M}_j$, there must be an
element $b^{\prime}$  such that $b^{\prime}\leq_j  y$ realising the whole of
$\biItp_{\mo{M}_i}(b) \cup \nbiItp_{\mo{M}_i}(b)$. Since $\biItp_{\mo{M}_i}(b)$
is realised by $b^{\prime}$, we have that $bZ_i b^{\prime}$ and since
$b^{\prime}$ realises  $\nbiItp_{M_i}(b)$,  we have $ b^{\prime} Z_j b$. 
\end{proof}

\subsection{Definable prime filter extensions}\label{ssec:def-p-filt-ext} 

Recall from Section~\ref{ssec:duality} the double-dual model  $(\mo{M}^+)_+ =
\bigl((\mo{F}^+)_+, V^*)$, with $\mo{F} = (W,\leq)$.  
We will call $(\mo{M}^+)_+$ a \emph{prime filter extension} of $\mo{M}$ and
denote it by $\mathfrak{pe}(\mo{M})$. This is because the
mapping $\pi : W \longrightarrow  \mathcal{F}_p(\mathrm{Up}(W))$ defined by 
$\pi(w) = \{x \in \mathrm{Up}(W): w \in x\}$ is an embedding, in the standard
model-theoretic sense, of the structure $\mo{M}$ into  $\mathfrak{pe}(\mo{M})$. 

For $x, y \subseteq\mathrm{Up}(W)$ we will say that $x$ \emph{is separated from}
$y$ if for any finite subsets $x' \subseteq x$ and $y' \subseteq y$ we have that
$\bigcap x' \not \subseteq \bigcup y'$. The following version of the prime
filter theorem is 
well known.

\begin{lemma}\label{lem:prime-fil-thm}
If $x$ is separated from $y$, then $x$ is included in a prime filter of\/
$\mathrm{Up}(W)$ that is disjoint from $y$.
\end{lemma}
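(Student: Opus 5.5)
The plan is to derive the statement from the standard prime filter theorem for distributive lattices, which in turn rests on Zorn's Lemma; nothing specific to double-Heyting structure is needed, only the fact that $\mathrm{Up}(W)$ is a bounded distributive lattice under $\cap$ and $\cup$.

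\emph{Step 1.} Let $F_0$ be the filter generated by $x$:
$$F_0=\{z\in\mathrm{Up}(W): \bigcap x'\subseteq z \text{ for some finite } x'\subseteq x\},$$
with the convention that $\bigcap\emptyset = W$, so that $F_0$ is nonempty. Dually, let $I_0$ be the ideal generated by $y$, consisting of all upsets $z$ with $z\subseteq\bigcup y'$ for some finite $y'\subseteq y$, where $\bigcup\emptyset=\emptyset$. Separation says precisely that $F_0\cap I_0=\emptyset$. Indeed, if $z$ lay in both, then $\bigcap x'\subseteq z\subseteq \bigcup y'$ for some finite $x'\subseteq x$ and $y'\subseteq y$, which is forbidden.

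\emph{Step 2.} Consider the family $\mathcal{P}$ of all filters of $\mathrm{Up}(W)$ that contain $F_0$ and are disjoint from $I_0$, ordered by inclusion. It is nonempty because $F_0\in\mathcal{P}$. The union of a chain in $\mathcal{P}$ is again a filter, is still disjoint from $I_0$, and contains $F_0$. By Zorn's Lemma, $\mathcal{P}$ therefore has a maximal element $P$.

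\emph{Step 3.} This is the only real work: showing that $P$ is prime. First note that $P$ is proper, since $\emptyset\in I_0$ and $P\cap I_0=\emptyset$.

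Suppose, for contradiction, that $a\cup b\in P$ but $a\notin P$ and $b\notin P$. Then the filter generated by $P\cup\{a\}$ strictly extends $P$, so by maximality it meets $I_0$. Hence there are $p_1\in P$ and $i_1\in I_0$ with $p_1\cap a\subseteq i_1$. In the same way there are $p_2\in P$ and $i_2\in I_0$ with $p_2\cap b\subseteq i_2$.

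Put $p=p_1\cap p_2\cap(a\cup b)$, which lies in $P$. By distributivity,
$$p=(p\cap a)\cup(p\cap b)\subseteq i_1\cup i_2 .$$
Since $I_0$ is an ideal, $i_1\cup i_2\in I_0$. As $I_0$ is downward closed within $\mathrm{Up}(W)$, it follows that $p\in I_0$, contradicting $P\cap I_0=\emptyset$.

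\emph{Conclusion.} So $P$ is a prime filter. It includes $x$ because $x\subseteq F_0\subseteq P$, and it is disjoint from $y$ because $y\subseteq I_0$.

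I expect Step 3 to be the main obstacle. It requires a correct description of the filter generated by $P\cup\{a\}$, namely the upsets containing some $p\cap a$ with $p\in P$, and it needs distributivity of the lattice.
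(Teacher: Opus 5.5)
Your proof is correct. The paper gives no argument of its own and simply invokes the well-known prime filter theorem for distributive lattices; your Zorn's Lemma argument (the filter generated by $x$ kept disjoint from the ideal generated by $y$, a maximal such filter, then primality via distributivity) is the standard proof of exactly that theorem, and your conventions $\bigcap\emptyset=W$, $\bigcup\emptyset=\emptyset$ are the reading of ``separated'' under which the lemma holds in edge cases such as $x=\emptyset$.
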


\begin{proposition} \label{inpe}
Let $\mo{M} = (W, \leq, V)$ be a Kripke model for bi-intuitionistic logic. Then
for any $u\in\mathcal{F}_p(\mathrm{Up}(W))$ and any formula $\phi$ of\/ $\biLan$
the following equivalences hold:
\begin{enumerate}
\item $V(\phi) \in u \iff \mathfrak{pe}(\mo{M}), u \Vdash \phi$.
\item $\mo{M}, w \Vdash \phi \iff \mathfrak{pe}(\mo{M}), \pi(w) \Vdash \phi$ for any
 $\phi\in \biLan$.
\item $\mo{M} \Vdash \phi \iff \mathfrak{pe}(\mo{M}) \Vdash \phi$ for any
  $\phi\in\biLan$. 
\end{enumerate}
\end{proposition}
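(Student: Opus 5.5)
We prove item~1 by induction on the complexity of $\phi$, and then obtain items~2 and~3 as consequences. The atomic case is the definition of $V^*$: $V(p)\in u$ iff $u\in V^*(p)$. For $\top$ and $\bot$ we use that a prime filter contains $W$ and does not contain $\emptyset$. The cases $\wedge$ and $\vee$ follow from $V(\phi\wedge\psi)=V(\phi)\cap V(\psi)$ and $V(\phi\vee\psi)=V(\phi)\cup V(\psi)$, together with $u$ being a filter and being prime.

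For $\to$, assume first $V(\phi\to\psi)\in u$. Let $u\subseteq v$ with $\mathfrak{pe}(\mo{M}),v\Vdash\phi$. By the induction hypothesis $V(\phi)\in v$. Since $V(\phi\to\psi)=V(\phi)\to V(\psi)$ in $\mathrm{Up}(W)$, Lemma~\ref{lem:modus-ponens} gives $V(\psi)\in v$, so $v\Vdash\psi$ by induction. Conversely, assume $V(\phi)\to V(\psi)\notin u$. We want a prime filter $v\supseteq u$ with $V(\phi)\in v$ and $V(\psi)\notin v$. By Lemma~\ref{lem:prime-fil-thm} it suffices that $u\cup\{V(\phi)\}$ is separated from $\{V(\psi)\}$. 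Suppose instead that $a\cap V(\phi)\subseteq V(\psi)$ for some $a\in u$ (finite subsets collapse by closure of $u$ under meets). Residuation then gives $a\subseteq V(\phi)\to V(\psi)$, hence $V(\phi)\to V(\psi)\in u$, a contradiction. By induction, this $v$ refutes $\phi\to\psi$ at a successor of $u$.

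For $\bito$, assume first $V(\phi)\bito V(\psi)\in u$. We need a prime filter $v\subseteq u$ with $V(\phi)\in v$ and $V(\psi)\notin v$. Apply Lemma~\ref{lem:prime-fil-thm} to $x=\{V(\phi)\}$ and $y=\{V(\psi)\}\cup(\mathrm{Up}(W)\setminus u)$. Because $u$ is prime, its complement is closed under finite unions, so a failure of separation would give $V(\phi)\subseteq V(\psi)\cup b$ with $b\notin u$. The dual residuation then gives $V(\phi)\bito V(\psi)\subseteq b$, hence $b\in u$, a contradiction. Being disjoint from the complement of $u$ means $v\subseteq u$. Conversely, suppose $v\subseteq u$, $V(\phi)\in v$ and $V(\psi)\notin v$. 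Since $V(\phi)\subseteq V(\psi)\cup(V(\phi)\bito V(\psi))$ and $v$ is prime, we get $V(\phi)\bito V(\psi)\in v\subseteq u$.

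Item~2 is item~1 applied to $u=\pi(w)$, since $V(\phi)\in\pi(w)$ iff $w\in V(\phi)$. For item~3, right to left follows from item~2. For left to right, if $V(\phi)=W$ then $V(\phi)$ lies in every filter, and item~1 finishes. The main obstacle is the $\bito$ case, specifically choosing the right set to separate from so that the resulting filter lies below $u$. The key fact there is that the complement of a prime filter is an ideal.
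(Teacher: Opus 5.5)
Your proposal is correct and takes essentially the same route as the paper. Both argue by induction on $\phi$: the $\to$ case uses Lemma~\ref{lem:modus-ponens} and separates $u\cup\{V(\phi)\}$ from $\{V(\psi)\}$, and the $\bito$ case separates $\{V(\phi)\}$ from $\{V(\psi)\}$ together with the prime ideal $\mathrm{Up}(W)\setminus u$, after which items 2 and 3 follow from item 1 via $\pi(w)$. The only difference is cosmetic: you prove the two halves of each case directly, while the paper proves their contrapositives.
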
 

\begin{proof} We establish (1)  by induction on the complexity of $\phi$. The
  case when $\phi$ is a propositional variable is immediate by definition of
  $V^{\mathfrak{pe}(\mo{M})}$. The case when $\phi = \bot$ is trivial since
  $V(\bot) = \emptyset$, which can never belong to a filter $u$. The cases for
  $\vee$ and $\wedge$ are obvious.

Let $\phi = \psi \rightarrow \chi$. Suppose that $\mathfrak{pe}(\mo{M}), u
\nVdash  \psi \rightarrow \chi$, i.e., there is $u_1$ such that
$u\leq^{\mathfrak{pe}(\mo{M})}u_1$ while $\mathfrak{pe}(\mo{M}), u_1 \Vdash
\psi $ and $\mathfrak{pe}(\mo{M}), u_1 \nVdash   \chi$.
By inductive hypothesis, $V(\psi) \in u_1$ and $V(\chi) \notin u_1$.
Hence $V(\psi) \rightarrow V(\chi) = V(\psi \rightarrow \chi) \notin u$ by
definition of $\leq^{\mathfrak{pe}(\mo{M})}$. On the other hand, suppose that
$\mathfrak{pe}(\mo{M}), u \Vdash \psi \rightarrow \chi$. We observe that $u \cup
\{V(\psi)\}$ cannot be separated from $\{V(\chi)\}$. Otherwise, there would be
$v \in \mathcal{F}_p(\mathrm{Up}(W))$ such that  $u \subseteq v, V(\psi) \in v,$ and
$V(\psi) \notin v$. Since $u \subseteq v$, by Lemma~\ref{lem:modus-ponens} it
follows that $u\leq^{\mathfrak{pe}(\mo{M})}v$ as well.
By inductive hypothesis, this would
imply that $\mathfrak{pe}(\mo{M}), v \Vdash \psi$ and $\mathfrak{pe}(\mo{M}), v
\nVdash \chi$, contradicting  $\mathfrak{pe}(\mo{M}), u \Vdash \psi \rightarrow
\chi$. Thus $u \cup \{V(\psi)\}$ cannot be separated from $\{V(\chi)\}$ and
since $u$ is closed under finite intersections there must be some $y \in u$ with
$y \cap V(\psi) \subseteq V(\chi)$, which entails that  $y \subseteq V( \psi
\rightarrow \chi)$ and hence  $V(\psi \rightarrow \chi) \in u$ as $u$ is a
filter.

Suppose $V(\psi \bito \chi) \notin u$.
Let $u' \in \mathcal{F}_p(\mathrm{Up}(W))$ be any prime filter such that $u' \subseteq u$ and  
suppose $\mathfrak{pe}(\mo{M}), u' \Vdash \psi$.
Then by the induction hypothesis we have $V(\psi) \in u'$.
Since $V(\psi) \subseteq V(\chi) \cup V(\psi \bito \chi)$
we find $V(\chi) \cup V(\psi \bito \chi) \in u'$.
We have $V(\psi \bito \chi) \notin u'$ because $u' \subseteq u$,
so since $u'$ is prime it must be the case that $V(\chi) \in u'$.
The induction hypothesis then implies $\mathfrak{pe}(\mo{M}), u' \Vdash \chi$.
By Lemma~\ref{lem:modus-ponens}, it implies that no prime filter $u'$ below $u$
can satisfy $ \psi $ but not $\chi$, and therefore
$\mathfrak{pe}(\mo{M}), u \nVdash  \psi \bito \chi$.
   For the converse, suppose $\mathfrak{pe}(\mo{M}), u \nVdash \psi \bito \chi$,
and let $I = \mathrm{Up}(W) \setminus u$. Then, by well-known properties of
distributive lattices, $I$ is a prime ideal.
Suppose towards a contradiction that there is no $a \in I$ such that
 $V(\psi) \subseteq V(\chi) \cup a$.
  Then $\{ V(\psi) \}$ is separated from $I \cup \{ V(\chi) \}$,
  so there exists a prime filter $u'$ containing $V(\psi)$ which is
  disjoint from $I \cup \{ V(\chi) \}$.%
  This implies $V(\psi) \in u'$ and $V(\chi) \notin u'$,
  so by the induction hypothesis we have
  $\mathfrak{pe}(\mo{M}), u' \Vdash \psi$ while
  $\mathfrak{pe}(\mo{M}), u' \nVdash \chi$.
  But since $u' \subseteq \mathrm{Up}(W) \setminus I = u$,
  this contradicts the assumption that $u \not\Vdash \psi \bito \chi$.
  So there must be some $a \in I$ such that
  $V(\psi) \subseteq V(\chi) \cup a$.
  This implies $V(\psi \bito \chi) \subseteq a$.
  Since $a \in I$ we have $a \notin u$, and hence $V(\psi \bito \chi) \notin u$.

For part (2) suppose that  $\mo{M} \Vdash \phi$. Then $V(\phi)= W$ and hence
$V(\phi) \in u$ for every $u \in  \mathcal{F}_p(\mathrm{Up}(W))$, so
$\mathfrak{pe}(\mo{M}),u \Vdash \phi$. Thus 
$\mathfrak{pe}(\mo{M}) \Vdash \phi$ since $u$ was  arbitrary. Conversely
if $\mo{M} \nVdash \phi$, there is some $w \in W$ such that $\mo{M}, w \nVdash
 \phi$. Thus, $\mathfrak{pe}(\mo{M}), \pi(w) \nVdash \phi$, and $\mathfrak{pe}(\mo{M})
 \nVdash \phi$ as desired. 
\end{proof}

\begin{definition}
Let $\mo{M} = (W, \leq, V)$ be a model. The \emph{definable dual algebra}
$\mo{M}^{+ \delta}$ of $\mo{M}$ is the structure $\langle
U(\mo{M}), \rightarrow, \bito, \cap, \cup, 1, 0 \rangle$,  where
$U(\mo{M}) = \{V(\phi) : \phi \in \biLan\}$ and the operations are
defined as in Definition~\ref{def:upset-alg}. In particular, 
$V(\top) = W$ and $V(\bot) = \emptyset$. 
\end{definition}

\begin{proposition} Let $\mo{M}$ be a Kripke model for bi-intuitionistic
logic. Then $\mo{M}^{+ \delta}$ is a double Heyting algebra. Indeed,
$\mo{M}^{+ \delta}$ is a subalgebra of\/ $\mo{M}^{+}$ generated by the set\/
$\{V(p): p\in\Prop\}$.
\end{proposition}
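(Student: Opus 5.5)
The plan is to show that $U(\mo{M})$ is a subset of $\mathrm{Up}(W)$ that contains $\emptyset$ and $W$ and is closed under the four operations $\cap,\cup,\to,\bito$ of $\mo{M}^+$. Since the operations of $\mo{M}^{+\delta}$ are by definition the restrictions of those of $\mo{M}^+$, this makes $\mo{M}^{+\delta}$ a subalgebra of $\mo{M}^+$. Because $\mathsf{DH}$ is a variety, and hence closed under subalgebras, and $\mo{M}^+$ is a double-Heyting algebra by the preceding proposition, it follows at once that $\mo{M}^{+\delta}$ is a double-Heyting algebra.

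For closure, I first note that each $V(\phi)$ is an upset. This is the persistence property, verified by induction on $\phi$ using the explicit formulas from Section~\ref{ssec:dbl-HA}: $V(\phi\to\psi)=W\setminus\downarr(V(\phi)\setminus V(\psi))$ is the complement of a downset, and $V(\phi\bito\psi)=\uparr(V(\phi)\setminus V(\psi))$ is an upset. The same computation, together with the agreement of these formulas with Definition~\ref{def:upset-alg}, gives the homomorphism identities
\[
V(\phi\circ\psi)=V(\phi)\circ V(\psi)\quad\text{for } \circ\in\{\wedge,\vee,\to,\bito\},
\]
where on the right-hand side $\circ$ denotes the corresponding operation of $\mo{M}^+$. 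Also $V(\top)=W$ and $V(\bot)=\emptyset$. Hence $U(\mo{M})$ is closed under all the operations.

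For the generation claim, let $\mathbf{S}$ be the subalgebra of $\mo{M}^+$ generated by $\{V(p):p\in\Prop\}$. The inclusion $U(\mo{M})\subseteq \mathbf{S}$ follows by induction on $\phi$, using the identities above. The converse inclusion holds because $U(\mo{M})$ is itself a subalgebra containing every generator, so it contains $\mathbf{S}$. Equivalently, $\phi\mapsto V(\phi)$ is the unique homomorphism from the term algebra extending $p\mapsto V(p)$, and its image is exactly the generated subalgebra.

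There is no real obstacle here. The only point requiring care is checking that the pointwise Kripke clauses for $\to$ and $\bito$ coincide with the set operations of Definition~\ref{def:upset-alg}, which is immediate from unfolding the definitions.
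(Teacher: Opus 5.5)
Your proof is correct and follows the route the paper intends. The paper gives no separate proof of this proposition; it relies on the observations in Section~\ref{ssec:dbl-HA} that truth sets are upsets and that $V$ carries $\wedge,\vee,\to,\bito,\top,\bot$ to the corresponding operations of $\mo{M}^+$, which is exactly your closure step, and from there your appeal to closure of the variety $\mathsf{DH}$ under subalgebras and your induction on formulas for the generation claim complete the argument.
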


\begin{definition}
  Let $\mo{M} = (W, \leq, V)$ be a model.  The \emph{dual model}
  $(\mo{M}^{+  \delta})_+$ of\/ $\mo{M}^{+ \delta}$ is the structure $\langle
\mathcal{F}_p(U(\mo{M})), \leq^*,  V^* \rangle$  where: 
\begin{enumerate}
\item $\mathcal{F}_p(U(\mo{M}))$ is the set of all prime filters of
  $U(\mo{M})$ in the algebra $\mo{M}^{+ \delta}$, 
\item $w\leq^* v$ iff for all $x, y \in U(\mo{M})$, if $x
  \rightarrow y \in w$  and $x \in v$ then $y \in v$, 
\item $V^*(p) = \{ w \in \mathcal{F}_p(U(\mo{M})) : V(p) \in w\}$.
\end{enumerate}
\end{definition}

We will write $\mo{M}^{\delta}$ for $(\mo{M}^{+ \delta})_+$, and call it the
\emph{definable prime filter extension} of $\mo{M}$.
It is easy to see that $\mo{M}$ embeds into $\mo{M}^{\delta}$ via the map 
$\pi : W \longrightarrow  \mathcal{F}_p(U(\mo{M}))$ given by
$$
\pi(w) = \{x \in U(\mo{M}): w \in x\}.
$$
By Proposition~\ref{prop:min-dual-alg}(3) it follows
that $\mo{M}^\delta$ is a bounded-morphic image of $\mathfrak{pe}(\mo{M})$, but
we will prove it directly below.

\begin{proposition}\label{prob} Let $\mo{M}$ be a Kripke model for
  bi-intuitionistic logic. The map $x \mapsto x \cap U(\mo{M})$ is a bounded
  morphism from $\mathfrak{pe}(\mo{M})$ onto $\mo{M}^{\delta}$. Moreover, for
  each $x \in \mathcal{F}_p(U(\mo{M}))$ there is $y \in \mathcal{F}_p(\mathrm{Up}(W))$
  such that $x = y \cap U(\mo{M})$.
\end{proposition}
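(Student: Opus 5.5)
Write $U = U(\mo{M})$ and $f(y) = y \cap U$ for $y \in \mathcal{F}_p(\mathrm{Up}(W))$. The plan is to check, in order, that $f$ is well defined, order-preserving, preserves and reflects the propositional letters, satisfies (ub) and (lb), and is surjective. The whole proof rests on Lemma~\ref{lem:prime-fil-thm}, applied inside $\mathrm{Up}(W)$ to subsets of $U$. This is possible because $U$ is a sublattice of $\mathrm{Up}(W)$ closed under $\to$ and $\bito$. Moreover, for $x,x'\in U$ with $x\subseteq x'$, the operations agree in both algebras.

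\emph{Routine parts.} Since $U$ is a sublattice containing $W$, $f(y)$ is a nonempty upset of $U$ closed under $\cap$ and prime; hence $f(y)$ is a prime filter of $U$. Order preservation is immediate: if $y\subseteq y'$ then $f(y)\subseteq f(y')$, and $\subseteq$ implies $\leq^*$ by the argument of Lemma~\ref{lem:modus-ponens}. That argument only uses that $w$ and $v$ are filters containing the relevant implications, so it works verbatim in $U$. Condition (prop) holds because $V(p)\in U$, so $V(p)\in y$ iff $V(p)\in f(y)$. I would also record that $\leq^*$ on $\mathcal{F}_p(U)$ coincides with inclusion: the converse direction of Lemma~\ref{lem:modus-ponens} works in $U$ using $x=W$.

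\emph{Surjectivity.} Given $x\in\mathcal{F}_p(U)$, let $x$ and $U\setminus x$ play the roles in Lemma~\ref{lem:prime-fil-thm}. They are separated in $\mathrm{Up}(W)$. Indeed, a finite intersection of members of $x$ lies in $x$, and a finite union of members of $U\setminus x$ lies in $U\setminus x$ by primeness. Inclusion of the former in the latter would then put the latter in $x$, a contradiction. The resulting prime filter $y$ satisfies $y\cap U=x$. This is also the ``moreover'' clause.

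\emph{Back conditions.} These are the main step. For (ub), suppose $f(y)\subseteq x'$ with $x'\in\mathcal{F}_p(U)$. I will separate $y\cup x'$ from $U\setminus x'$. If this failed, we would have $a\cap b\subseteq c$ with $a\in y$, $b\in x'$ and $c\in U\setminus x'$ (using closure under finite meets and joins). The difficulty is that $a$ need not lie in $U$. To handle this, use $a\subseteq b\to c$, where $b\to c\in U$; then $b\to c\in y\cap U\subseteq x'$. Since $x'$ is a filter, $(b\to c)\cap b\in x'$, and by (\dag) this gives $c\in x'$, a contradiction. The prime filter $y'$ obtained from Lemma~\ref{lem:prime-fil-thm} satisfies $y\subseteq y'$ and $f(y')=x'$.

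For (lb), suppose $x'\subseteq f(y)$. I will separate $x'$ from $(\mathrm{Up}(W)\setminus y)\cup(U\setminus x')$. If this failed, there would be $b\subseteq a\cup c$ with $b\in x'$, $a\notin y$ and $c\in U\setminus x'$. Then $b\bito c\subseteq a$, so $b\bito c\notin y$, and since $b\bito c\in U$ we get $b\bito c\notin x'$. However, $b\subseteq c\cup(b\bito c)$, so by primeness of $x'$ we would have $c\in x'$ or $b\bito c\in x'$, a contradiction. The resulting $y'$ satisfies $y'\subseteq y$ and $y'\cap U=x'$.

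These two separation arguments, which use the residuation laws to pull elements of $\mathrm{Up}(W)$ back into $U$, are the only nontrivial steps.
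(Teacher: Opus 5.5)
Your proof is correct. Its skeleton matches the paper's: the (prop) clause and the surjectivity/``moreover'' clause are argued exactly as there, by separating $x$ from $U(\mo{M})\setminus x$ and applying Lemma~\ref{lem:prime-fil-thm}.

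Where you differ is the frame part. The paper identifies $x\mapsto x\cap U(\mo{M})$ as the dual $i_+$ of the inclusion $i\colon\mo{M}^{+\delta}\hookrightarrow\mo{M}^{+}$ and simply asserts that $i_+$ is a bounded morphism. In other words, it takes order-preservation, (ub) and (lb) from general duality theory, namely the fact behind Proposition~\ref{prop:min-dual-alg}(3), which the paper states without proof. It also uses tacitly that $\leq^*$ on $\mathcal{F}_p(U(\mo{M}))$ is just inclusion.

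You prove all of this directly. You identify $\leq^*$ with ${\subseteq}$ via the argument of Lemma~\ref{lem:modus-ponens}. You obtain the two back conditions by separation arguments in which residuation replaces an arbitrary upset by a member of $U(\mo{M})$: $a\subseteq b\to c$ for (ub), and $b\bito c\subseteq a$ for (lb). This is in effect the standard proof of that duality fact, specialised to this particular inclusion.

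The paper's route buys brevity and uniformity with its later algebraic arguments, e.g.\ Theorem~\ref{thm:GT}. Yours is self-contained and shows exactly which structure is needed: beyond the lattice operations, (ub) uses only closure of $U(\mo{M})$ under $\to$, and (lb) only closure under $\bito$.
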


\begin{proof} Consider the inclusion homomorphism $i$ from $\mo{M}^{+ \delta}$ into $\mo{M}^+$. A dual mapping $i_+: (\mo{M}^+)_+ \longrightarrow (\mo{M}^{+ \delta})_+$ is defined by the equation:
\begin{center}
$i_+(x) = i^{-1}(x)$.
\end{center}
But  $i^{-1}(x) = \{y \in U(\mo{M}) : y \in x\}$, so $i_+$ is indeed the map
mentioned in the statement of the proposition. Furthermore, $i_+$ is a bounded
morphism. The only thing that we need to notice is that $\mathfrak{pe}(\mo{M}),
x \Vdash p$ iff $V(p) \in x$ iff $V(p) \in  x \cap U(\mo{M})$ iff
$\mo{M}^{\delta},  x \cap U(\mo{M}) \Vdash p $, for any propositional variable
$p$  of  $\biLan$. 

Finally let $x \in \mathcal{F}_p(U(\mo{M}))$. Since $x$ is a prime filter of
$U(\mo{M})$ it is not difficult to see that $x$ is separated from $U(\mo{M})
\setminus x$. Hence, by Lemma~\ref{lem:prime-fil-thm}, there is a prime filter
$y\subseteq U(\mo{M})$, such that $y \supseteq x$ and
$y \cap (U(\mo{M})\setminus x) = \emptyset$.
It follows that $ x= y \cap U(\mo{M})$, as desired.  A similar argument
establishes that the mapping under consideration is surjective. 
 \end{proof}

 We will say that  a class $K$ is  \emph{closed under surjective   directed
   bisimulations}   if whenever  
 $\mo{M} = (W, \leq, V)$ and $\mo{M}' = (W', \leq', V')$ are two
  Kripke models and  $\langle Z_1, Z_2 \rangle$  (where $Z_1\subseteq W\times
  W'$ and $Z_2\subseteq W'\times W$) is a  directed bisimulation surjective with
  respect to $\mo{M}'$, then $\mo{M} \in K$ only if $\mo{M}' \in K$.

\begin{corollary} \label{clo}
Suppose K is a class of  Kripke models for bi-intuitionistic logic closed under
surjective   directed bisimulations. Then $\mathfrak{pe}(\mo{M}) \in K$ if and
only if\/ $\mo{M}^{\delta} \in K$.
\end{corollary}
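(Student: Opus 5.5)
Both directions come from exhibiting a surjective directed bisimulation in each direction between $\mathfrak{pe}(\mo{M})$ and $\mo{M}^{\delta}$, and then applying the closure hypothesis on $K$.

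For the direction from $\mathfrak{pe}(\mo{M})\in K$ to $\mo{M}^{\delta}\in K$ I will use Proposition~\ref{prob}. It says that the map $f\colon x\mapsto x\cap U(\mo{M})$ is a surjective bounded morphism from $\mathfrak{pe}(\mo{M})$ onto $\mo{M}^{\delta}$. Proposition~\ref{bmdr} then gives a directed bisimulation $\langle Z_1,Z_2\rangle$ with $xZ_1y$ iff $f(x)\leq^* y$ and $yZ_2x$ iff $y\leq^* f(x)$. Its final clause says this bisimulation is surjective with respect to both models, in particular with respect to $\mo{M}^{\delta}$. Closure of $K$ under surjective directed bisimulations then yields $\mo{M}^{\delta}\in K$.

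For the converse I will use the same pair with the roles of the models swapped. Set $Z_1'=Z_2\subseteq \mathcal{F}_p(U(\mo{M}))\times\mathcal{F}_p(\mathrm{Up}(W))$ and $Z_2'=Z_1$. The definition of a directed bisimulation is symmetric: interchanging the two models and the two relations permutes the clauses, with (D$_1$) and (D$_2$) swapping, (D$_3$) and (D$_4$) swapping, and (D$_5$) and (D$_6$) swapping. Hence $\langle Z_2,Z_1\rangle$ is a directed bisimulation from $\mo{M}^{\delta}$ to $\mathfrak{pe}(\mo{M})$.

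The step that needs care is surjectivity with respect to $\mathfrak{pe}(\mo{M})$: every $x\in\mathcal{F}_p(\mathrm{Up}(W))$ must be $Z_2$-related from some point of $\mo{M}^{\delta}$. This is exactly the "surjective with respect to both models" clause of Proposition~\ref{bmdr}, witnessed by $f(x)\,Z_2\,x$, since $f(x)\leq^* f(x)$. It uses only reflexivity of $\leq^*$, which holds by Lemma~\ref{lem:modus-ponens}, since filters are closed under meets and $x\cap(x\to y)\le y$. I will also check that the notion of surjectivity used in the closure definition, namely every world of the target lies in the range of the first relation, matches this witness. With that verified, $\mathfrak{pe}(\mo{M})\in K$ follows from $\mo{M}^{\delta}\in K$, which completes the corollary.
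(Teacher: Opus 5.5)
Your proposal is correct and follows the paper's proof: Proposition~\ref{prob} supplies the surjective bounded morphism $x\mapsto x\cap U(\mo{M})$, Proposition~\ref{bmdr} turns it into a directed bisimulation that is surjective with respect to both models, and the closure hypothesis on $K$ then gives the result. You also spell out a step the paper leaves implicit: the reverse direction uses the swapped pair $\langle Z_2,Z_1\rangle$, which is a directed bisimulation because the swap only permutes the clauses (D$_1$)--(D$_6$), and its surjectivity with respect to $\mathfrak{pe}(\mo{M})$ is witnessed by $f(x)\,Z_2\,x$.
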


\begin{proof}
Let $g$ be the bonded morphism $ x \mapsto x \cap U(\mo{M})$,  given in
Proposition \ref{prob}. By Proposition \ref{bmdr},  the pair $\langle Z_1,
Z_2\rangle$  is a directed bisimulation which is surjective with respect to both   $\mo{M}^{\delta}$ and  $\mathfrak{pe}(\mo{M})$, where:
\begin{align*}
xZ_1y &\iff g(x) \leq^{\mo{M}^{\delta}} y,\\
xZ_2y &\iff x \leq^{\mo{M}^{\delta}} g(y).
\end{align*}
Consequently, $\mathfrak{pe}(\mo{M}) \in K$ iff  $\mo{M}^{\delta} \in K$ by the
closure assumption on $K$. 
\end{proof}

\begin{corollary} \label{dpfe}
Let $\mo{M}$ be a Kripke model for bi-intuitionistic logic. Then for any prime
filter $u$ of $U(\mo{M})$ and formula $\phi$ of\/ $\biLan$ we have that
$V(\phi) \in u$ if and only if $\mo{M}^{\delta}, u \Vdash \phi$.
Moreover, $\mo{M} \Vdash \phi$ if and only if $\mo{M}^{\delta} \Vdash \phi$ for
any such $\phi$. 
\end{corollary}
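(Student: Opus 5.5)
The first claim follows from Proposition~\ref{inpe} and Proposition~\ref{prob}. We take the bounded morphism $g\colon \mathfrak{pe}(\mo{M})\to\mo{M}^{\delta}$, $g(y)=y\cap U(\mo{M})$, from Proposition~\ref{prob}, and fix a prime filter $u$ of $U(\mo{M})$. By the second half of Proposition~\ref{prob} there is a prime filter $y$ of $\mathrm{Up}(W)$ with $u = y\cap U(\mo{M}) = g(y)$. We then have the chain
\[
V(\phi)\in u \iff V(\phi)\in y \iff \mathfrak{pe}(\mo{M}), y \Vdash \phi \iff \mo{M}^{\delta}, g(y)\Vdash\phi \iff \mo{M}^{\delta}, u\Vdash \phi .
\]
The first equivalence holds because $V(\phi)\in U(\mo{M})$, so membership in $y$ and in $y\cap U(\mo{M})$ agree. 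The second is Proposition~\ref{inpe}(1). The third is Proposition~\ref{mt:prop:mor-pres-truth} applied to the bounded morphism $g$, which by Proposition~\ref{prob} also respects the valuations.

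For the \emph{moreover} part, suppose first that $\mo{M}\Vdash\phi$. Then $V(\phi)=W$ is the top element of $U(\mo{M})$, so it lies in every prime filter $u$, and the first claim gives $\mo{M}^{\delta}\Vdash\phi$. Conversely, if $\mo{M}, w\nVdash\phi$, then $V(\phi)\notin\pi(w)$. Here $\pi(w)=\{x\in U(\mo{M}): w\in x\}$ is a prime filter of $U(\mo{M})$: it is an upset, it is closed under $\cap$, it contains $W$, it omits $\emptyset$, and it is prime because $w\in x\cup x'$ forces $w\in x$ or $w\in x'$. The first claim then gives $\mo{M}^{\delta},\pi(w)\nVdash\phi$. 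Alternatively, one can combine Proposition~\ref{inpe}(3) with the fact that a surjective bounded morphism preserves and reflects global validity (Proposition~\ref{prop:truth-pres}(5) together with Proposition~\ref{mt:prop:mor-pres-truth}).

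The only step needing care is whether Proposition~\ref{prob} really delivers a bounded morphism of \emph{models}, with the order $\leq^*$ on $\mo{M}^{\delta}$ matching the inclusion order on $\mathfrak{pe}(\mo{M})$ through $g$. If that is in doubt, I would fall back on proving the first claim directly by induction on $\phi$, repeating the argument of Proposition~\ref{inpe}(1) inside the algebra $U(\mo{M})$. The $\to$ and $\bito$ cases then go through using only elements of the form $V(\psi)$, which lie in $U(\mo{M})$, together with Lemma~\ref{lem:prime-fil-thm} applied in the distributive lattice $U(\mo{M})$. The one point to check there is that $\leq^*$ coincides with inclusion of prime filters of $U(\mo{M})$, and the proof of Lemma~\ref{lem:modus-ponens} transfers verbatim to show this.
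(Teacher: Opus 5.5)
Your proof is correct, and your argument for the first claim is exactly the paper's chain $V(\phi)\in u \iff V(\phi)\in y \iff \mathfrak{pe}(\mo{M}),y\Vdash\phi \iff \mo{M}^{\delta},u\Vdash\phi$, via Propositions~\ref{prob} and~\ref{inpe}(1).

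For the \emph{moreover} part your route differs slightly from the paper's. The paper combines Proposition~\ref{inpe}(3) with Corollary~\ref{clo}. That is, it transfers global validity from $\mathfrak{pe}(\mo{M})$ to $\mo{M}^{\delta}$ through the surjective directed bisimulation induced by $g$, applied to the class of models of $\phi$, which is closed under such bisimulations by Proposition~\ref{rdbp}. This is essentially your ``alternative.''

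Your primary argument instead reads global validity directly off the truth lemma you just proved. It uses two facts: $W=V(\top)$ lies in every prime filter of $U(\mo{M})$, and $\pi(w)$ is a prime filter omitting $V(\phi)$ whenever $\mo{M},w\nVdash\phi$. This is the same argument the paper itself uses for $\mathfrak{pe}(\mo{M})$ in Proposition~\ref{inpe}. It is more self-contained, since it needs no detour through directed bisimulations. The paper's version is shorter only because Corollary~\ref{clo} is already available.

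Your worry about $\leq^*$ is resolved as you suspected. The proof of Lemma~\ref{lem:modus-ponens} uses only that the algebra contains $W$ and is closed under $\to$. So $\leq^*$ coincides with inclusion on prime filters of $U(\mo{M})$, which is what makes $g$ order-preserving and bounded.
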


\begin{proof}
Consider the mapping given in Proposition \ref{prob}. We have that
$\mathfrak{pe}(\mo{M}), x \Vdash \phi$ iff  $\mo{M}^{\delta}, x \cap U(\mo{M})
\Vdash \phi$ and that indeed all elements of $\mo{M}^{\delta}$ are of the form $
x \cap U(\mo{M}) $. Therefore, using  Proposition \ref{inpe}, we see that if $u=
x \cap U(\mo{M})$ is a prime filter of $U(\mo{M})$, $V(\phi) \in u$ iff $V(\phi)
\in x$ iff $\mathfrak{pe}(\mo{M}), x \Vdash \phi$ iff $\mo{M}^{\delta}, u \Vdash
\phi$. 

The last part of the result follows from  Proposition \ref{inpe} again using
Corollary \ref{clo}. 
\end{proof}

According to Corollaries~\ref{rdbp}, \ref{dpfe} and Proposition~\ref{inpe}, we
can see that a class of models $K$ definable by a theory of $\biLan$  is always
going to be closed under surjective directed bisimulations, (definable) prime
extensions and disjoint unions. Indeed, both $K$ and its complement
$\overline{K}$ will be closed under (definable) prime extensions.

\begin{definition}\label{def:inner-subm}
Let $\mo{M}$ be a model. A submodel (in the classical first-order sense)
$\mo{M}^{\prime}$ of\/ $\mo{M}$ will be called
an \emph{inner submodel} of\/ $\mo{M}$ if the pair $\langle I, I\rangle$, where
$I$ is the identity relation on $\mo{M}^{\prime}$, is a  directed bisimulation between
$\mo{M}$ and $\mo{M}^{\prime}$.
\end{definition}

Clearly, if $\mo{M}^{\prime}$ is an inner submodel of $\mo{M}$,
then $\langle I, I \rangle$ is a directed bisimulation surjective with respect to
$\mo{M}^{\prime}$. Note that, using Corollary~\ref{rdbp}, it follows that a
definable class of models is always going to be closed under inner submodels,
that is, given a class $K$, if $\mo{M} \in K$ and $\mo{M}^{\prime}$ is an inner
submodel of $\mo{M}$, then $\mo{M}^{\prime} \in K$. 

\begin{lemma} \label{gold}
Let $\{\mo{M}_i : i \in I\}$ be a family of Kripke models and
$\prod_{i\in I} \mo{M}_i /U$ an ultraproduct. Then  $\prod \mo{M}_i / U$ is
isomorphic to an inner submodel of the ultrapower
$(\coprod_{i \in I} \mo{M}_i)^I/U$. Hence, a class
closed under disjoint unions, inner submodels and ultrapowers is closed under
ultraproducts.
\end{lemma}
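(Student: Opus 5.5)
The embedding is the standard one from Goldblatt's argument. Let $\mo{N} = \coprod_{i\in I}\mo{M}_i$, with domain $\biguplus W_i$. Choose the map
$$
h\colon \prod_{i\in I}\mo{M}_i/U \longrightarrow \mo{N}^I/U,\qquad h([f]_U) = [f]_U,
$$
where $f\in\prod_i W_i$ is read as a function $I\to\biguplus W_i$ with $f(i)\in W_i$. The plan has three steps: check that $h$ is well defined and an embedding; identify its image; and verify that the identity pair on the image satisfies (D$_1$)--(D$_6$). The closure statement is then immediate.

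\textbf{Step 1: $h$ is an embedding.} Two functions in $\prod_i W_i$ agree $U$-almost everywhere in the same sense whether viewed in the product or in $\mo{N}^I$, so $h$ is well defined and injective. Since $\leq$ and each $V(p)$ on $W_i$ are restrictions of the corresponding relations on $\mo{N}$, the coordinatewise conditions coincide. Łoś's theorem therefore gives $[f]\leq[g]$ in the ultraproduct iff $\{i: f(i)\leq g(i)\}\in U$ iff $[f]\leq[g]$ in $\mo{N}^I/U$, and similarly for each predicate $P$. So $h$ is an isomorphism onto the first-order submodel $\mo{M}'$ of $\mo{N}^I/U$ whose domain is $D=\{[g]: \{i : g(i)\in W_i\}\in U\}$. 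Every such class has a representative in $\prod_i W_i$, obtained by modifying $g$ off a $U$-large set (here we use that each $W_i$ is nonempty, or we discard empty components). Hence $D$ is exactly the image of $h$.

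\textbf{Step 2: $\mo{M}'$ is an inner submodel.} With $Z_1=Z_2=I$ the identity on $D$, conditions (D$_1$) and (D$_2$) are trivial. Conditions (D$_3$)--(D$_6$) reduce to the claim that $D$ is both upward and downward closed in $\mo{N}^I/U$; given that, the witness $v$ can always be taken equal to $v'$. This closure is the only step with real content, and it is where the disjoint-union structure is used. Suppose $[f]\in D$ and $[f]\leq[g]$. Then $\{i : f(i)\in W_i\}\cap\{i: f(i)\leq g(i)\}\in U$. In a disjoint union, $\leq$ never relates points of different components, so on this set $g(i)\in W_i$, whence $[g]\in D$. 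The downward case is symmetric. One could also argue with first-order formulas via Łoś, but the componentwise argument above is the direct one.

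\textbf{Step 3: the closure consequence.} Let $K$ be closed under disjoint unions, inner submodels and ultrapowers, and let each $\mo{M}_i\in K$. Then $\coprod_i\mo{M}_i\in K$, so its ultrapower $\mo{N}^I/U$ is in $K$, and so is its inner submodel $\mo{M}'\cong\prod_i\mo{M}_i/U$. Finally, $K$ must be closed under isomorphism. If this is not taken for granted, note that an isomorphism induces a surjective directed bisimulation (by Proposition~\ref{bmdr}), so this holds in the intended applications. I expect the main point needing care to be Step 2, together with the bookkeeping about representatives in Step 1.
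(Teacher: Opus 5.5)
Your proposal is correct and follows the paper's proof. That proof consists solely of the map $f/U \mapsto f'/U$, which sends $f \in \prod_i W_i$ to the same function viewed in the ultrapower of the disjoint union; your verification fills in the details the paper leaves out, in particular that the image is upward and downward closed because the order of a disjoint union never relates points in different components, which is exactly what (D$_4$) and (D$_5$) require.
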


\begin{proof} Simply consider the mapping $f/ U \mapsto f^{\prime}/ U$ where $
  f^{\prime}: I \longrightarrow \bigcup_{i \in I} W_i $ is defined by
  $f^{\prime}(i) = f(i)$.
\end{proof}

\begin{lemma}\label{comp}
Let $\biLan^{corr +}$ be obtained by adding a list of constants to
$\biLan^{corr}$. Suppose $K$ is a class of Kripke models which is closed under
ultraproducts. Then for any  set $\Theta$ of formulas of\/ $\biLan^{corr +}$, if
$\Theta$ is finitely satisfiable in $K$ then $\Theta$ is satisfiable in
$K$.
\end{lemma}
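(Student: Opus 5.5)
The plan is to prove Lemma~\ref{comp} by the standard ultraproduct argument from the proof of the compactness theorem via \L{}o\'s's theorem. The only twist is the constants. A model satisfying formulas of $\biLan^{corr +}$ is a Kripke model from $K$ together with an interpretation of the new constants, and the ultraproduct of the reducts must remain in $K$.

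Suppose $\Theta$ is finitely satisfiable in $K$. I take $I$ to be the set of all finite subsets of $\Theta$. For each $i\in I$, I choose $\mo{M}_i\in K$ and an interpretation $\overline{c}^{\,i}$ of the constants in $W_i$ such that $(\mo{M}_i,\overline{c}^{\,i})\vDash \bigwedge i$; here the formulas are read as sentences, with any free variables treated as additional constants. For $\theta\in\Theta$ let $\hat\theta=\{i\in I:\theta\in i\}$. The family $\{\hat\theta:\theta\in\Theta\}$ has the finite intersection property, since $\{\theta_1,\dots,\theta_n\}\in\hat\theta_1\cap\dots\cap\hat\theta_n$. Hence it extends to an ultrafilter $U$ on $I$.

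Next I form the ultraproduct $\prod_{i\in I}(\mo{M}_i,\overline{c}^{\,i})/U$ in the expanded language. By \L{}o\'s's theorem it satisfies $\theta$ for every $\theta\in\Theta$, because $\{i : (\mo{M}_i,\overline{c}^{\,i})\vDash\theta\}\supseteq\hat\theta\in U$. Its reduct to $\biLan^{corr}$ is exactly $\prod_{i\in I}\mo{M}_i/U$, since ultraproducts commute with reducts. This reduct lies in $K$ by the closure assumption. Therefore $\Theta$ is satisfied in a member of $K$, with the constants interpreted by the classes of the sequences $(c^i)_{i\in I}$.

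The step needing care is checking that the ultraproduct of the reducts really is a Kripke model in $K$. This is the same as the reduct of the ultraproduct. Moreover, the partial-order axioms and the upward closure of each $P$ are first-order, so they transfer by \L{}o\'s, and closure of $K$ under ultraproducts applies. If $\Theta$ is a set of formulas with free variables rather than sentences, I first replace the variables by fresh constants; this reduces that case to the one above. One can also note that by Lemma~\ref{gold} closure under ultraproducts is available in the intended application, but here it is simply the hypothesis.
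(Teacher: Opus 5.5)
Your proposal is correct and takes essentially the same route as the paper. The paper's proof simply invokes the standard ultraproduct proof of the compactness theorem (Bell--Slomson, Thm.~4.1). You write that argument out in full, and your treatment of the added constants via reducts (the ultraproduct of the reducts is the reduct of the ultraproduct, so it lies in $K$ by the closure hypothesis) is the detail needed to adapt it to the class $K$.
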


\begin{proof}
This is simply the proof of the compactness theorem using ultraproducts which
can be found for example in~\cite{bell} (Theorem 4.1). 
\end{proof}
If $\mo{M} = (W, \leq, V)$ is a model and $x\in W$, we consider the inner
submodel $\mo{M}_x $ with domain
$$
W_x= \{y \mid \exists n\in\mathbb{N}\ \exists z_0,  \dots, z_n
\bigl(\bigwedge_{i < n}(z_{i+1} \leq z_i \ \vee \ z_i \leq z_{i+1}) \wedge (z_n = y)
\wedge (z_0 = x)\bigr)\},
$$
and where both
the ordering and the valuation are induced from $\mo{M} = (W, \leq, V)$.

\begin{lemma} \label{disjoint} Let $\mo{M} = (W, \leq, V)$ be a Kripke model for
  bi-intuitionistic logic and $K$ a class of models closed under directed
  bisimulations and disjoint unions, then 
  $$
  \mo{M} \in K \iff \mo{M}_x \in K,  \text{ for all } x\in W.
  $$ 
 \end{lemma}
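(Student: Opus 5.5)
Both directions rest on two facts: each $\mo{M}_x$ is an inner submodel of $\mo{M}$, and $\mo{M}$ is isomorphic to a disjoint union of such submodels. Throughout, ``closed under directed bisimulations'' is read as closure under surjective directed bisimulations, as defined before Corollary~\ref{clo}.

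First I check that $W_x$ is both upward and downward closed in $W$. If $y\in W_x$ is reached by a zigzag chain $z_0=x,\dots,z_n=y$ and $y\leq v$ or $v\leq y$, then appending $z_{n+1}=v$ gives a chain witnessing $v\in W_x$. So, by Lemma~\ref{lem:gen-subfr}, $\mo{M}_x$ is a generated submodel of $\mo{M}$. I then verify that $\langle I,I\rangle$, with $I$ the identity on $W_x$, satisfies (D$_1$)--(D$_6$). The atomic clauses hold because the valuation is restricted. For the zigzag clauses, any successor or predecessor of a point of $W_x$ taken in $W$ already lies in $W_x$, and we can choose it as its own witness. Hence $\mo{M}_x$ is an inner submodel, and $\langle I,I\rangle$ is surjective onto $\mo{M}_x$.

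($\Rightarrow$) If $\mo{M}\in K$, then closure under surjective directed bisimulations, applied to $\langle I,I\rangle$, gives $\mo{M}_x\in K$ for every $x\in W$.

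($\Leftarrow$) The relation ``$y\in W_x$'' is an equivalence relation: it is reflexive via the chain with $n=0$, symmetric by reversing a chain, and transitive by concatenating chains. Its classes are exactly the sets $W_x$. Choose one representative $x$ from each class, collected in a set $R\subseteq W$. Distinct classes are disjoint, and no $\leq$-pair crosses between classes because each class is up- and down-closed. Therefore the map $\coprod_{x\in R}\mo{M}_x\to\mo{M}$ sending each copy of $w$ to $w$ is a bijection that preserves and reflects both order and valuation, i.e.\ an isomorphism.

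It is in particular a surjective bounded morphism, so Proposition~\ref{bmdr} gives a directed bisimulation from $\coprod_{x\in R}\mo{M}_x$ to $\mo{M}$ that is surjective onto $\mo{M}$. Since every $\mo{M}_x\in K$, closure under disjoint unions yields $\coprod_{x\in R}\mo{M}_x\in K$, and closure under surjective directed bisimulations then yields $\mo{M}\in K$.

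The main obstacle is only bookkeeping. One must check that the connected components coincide with the sets $W_x$ and are closed in both directions, so that the disjoint union really recovers $\mo{M}$. One must also make sure that the transfer from the disjoint union to $\mo{M}$ goes through a surjective directed bisimulation (supplied here by the isomorphism via Proposition~\ref{bmdr}), not a bare isomorphism, since closure under isomorphism is not assumed separately.
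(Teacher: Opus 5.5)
Your proof is correct and follows essentially the paper's route. You map a disjoint union of the $\mo{M}_x$ onto $\mo{M}$ by a surjective bounded morphism, convert it into a surjective directed bisimulation via Proposition~\ref{bmdr}, and apply the closure hypotheses; the forward direction comes from $\mo{M}_x$ being an inner submodel. The only difference is that the paper indexes the disjoint union by all $x\in W$, tagging copies as $\langle w,x\rangle$, so that $\langle w,x\rangle\mapsto w$ is just a surjective (non-injective) bounded morphism rather than an isomorphism, which spares you the equivalence-class and choice-of-representatives bookkeeping.
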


\begin{proof} First, we can take isomorphic copies of the models $\mo{M}_x$ ($x
  \in W$) to make sure that they're pairwise disjoint. For definiteness, we can
  simply let the domain of $\mo{M}_x$ be the set $\{\langle w, x\rangle \mid w
  \in W_x\}$.  Notice that for the disjoint union $ \coprod_{x \in W}
  \mo{M}_x$, the mapping $\langle w, x\rangle \mapsto w$ is a surjective
  bounded morphism (cf. the remark after Definition~\ref{def:disj-union}).
  This suffices to establish our result. 
\end{proof}

\begin{proposition}\label{tdb} Let $\mo{M}= (W, \leq, V)$ be a   Kripke model
  and $\mo{N}=(W', \leq', V')$ an $\omega$-saturated   Kripke model such that
$$
\mo{N} \Vdash \phi \iff \mo{M} \Vdash \phi
$$
for any formula $\phi\in \biLan$. Then there is a directed bisimulation surjective
with respect to both $\mo{N}$ and $\mo{M}^{\delta}$ between these two
 models.
\end{proposition}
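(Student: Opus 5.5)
Define $Z_1 \subseteq W'\times \mathcal{F}_p(U(\mo{M}))$ and $Z_2\subseteq \mathcal{F}_p(U(\mo{M}))\times W'$ by
$$
aZ_1u \iff \forall\phi\,(\mo{N},a\Vdash\phi \Rightarrow V(\phi)\in u),\qquad
uZ_2a \iff \forall\phi\,(V(\phi)\in u \Rightarrow \mo{N},a\Vdash\phi).
$$
By Corollary~\ref{dpfe}, $V(\phi)\in u$ iff $\mo{M}^\delta,u\Vdash\phi$, so $Z_1,Z_2$ are inclusions of bi-intuitionistic types, as in Proposition~\ref{pro:hm}. Conditions (D$_1$) and (D$_2$) are then immediate. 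The main tool is a transfer between $\mo{N}$ and $\mo{M}$. For $a\in W'$ set $t(a)=\{V(\phi):\mo{N},a\Vdash\phi\}$. This is a prime filter of $U(\mo{M})$, and it is well defined: if $V(\phi)=V(\psi)$ then $\mo{M}\Vdash(\phi\to\psi)\wedge(\psi\to\phi)$, hence so does $\mo{N}$. Primeness and the filter property follow from the $\vee,\wedge$ clauses, together with $\mo{M}\Vdash\phi\to\psi$ iff $\mo{N}\Vdash\phi\to\psi$ (which transfers inclusions of truth sets). Clearly $aZ_1t(a)$ and $t(a)Z_2a$, so every $a\in W'$ lies in the domain of $Z_1$.

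Surjectivity onto $\mo{M}^\delta$ is the step I expect to be the main obstacle. Given a prime filter $u$, I must find $a\in W'$ with $t(a)=u$. The set of first-order formulas $\{ST_x(\phi):V(\phi)\in u\}\cup\{\neg ST_x(\psi):V(\psi)\notin u\}$ should be finitely satisfiable in $\mo{N}$. Suppose not. Then for some $\phi$ with $V(\phi)\in u$ (take a conjunction) and some $\psi$ with $V(\psi)\notin u$ (take a disjunction, using primeness), we have $\mo{N}\Vdash\phi\to\psi$. Hence $\mo{M}\Vdash\phi\to\psi$, so $V(\phi)\subseteq V(\psi)$ and $V(\psi)\in u$, a contradiction. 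Since the set is a type in one variable without parameters, $\omega$-saturation realizes it. This gives $a$ with $aZ_1u$ and $uZ_2a$, so both relations are surjective in the required sense.

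For the zig-zag clauses I argue as in Proposition~\ref{pro:hm}, using saturation on the $\mo{N}$ side and the prime filter theorem on the $\mo{M}^\delta$ side.

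\emph{(D$_3$), moving from $\mo{M}^\delta$ to $\mo{N}$.} Suppose $aZ_1u$ and $u\leq^*v$. Realize the type of $v$ above $a$. For a finite part $\phi$ (true at $v$) and $\psi$ (false at $v$), we get $\mo{M}^\delta,u\nVdash\phi\to\psi$. Hence $\mo{N},a\nVdash\phi\to\psi$, because $Z_1$ preserves truth of $\phi\to\psi$ from $a$ to $u$. So some $b\geq a$ satisfies $\phi\wedge\neg\psi$, and saturation with parameter $a$ realizes the whole type above $a$. The case (D$_6$) ($\leq^*$ downward, from $Z_2$) is analogous, using $\bito$ and saturation below $a$.

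\emph{(D$_4$) and (D$_5$), moving from $\mo{N}$ to $\mo{M}^\delta$.} Suppose $uZ_2a$ and $a\leq' b$. I need a prime filter $v$ with $u\leq^*v$ and $t(b)=v$. The obvious candidate is $v=t(b)$. Then $u\leq^* t(b)$ follows from the order clause of $\leq^*$. If $V(\phi)\to V(\psi)\in u$, then $\mo{N},a\Vdash\phi\to\psi$ by $Z_2$. So whenever $V(\phi)\in t(b)$, we get $\mo{N},b\Vdash\psi$ and $V(\psi)\in t(b)$. The downward case is symmetric, with $aZ_1u$ and $b\leq' a$ giving $t(b)\leq^* u$. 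Checking the definition of $\leq^*$ here is routine, via Lemma~\ref{lem:modus-ponens}-style reasoning together with the $\bito$ clause dualized: if $V(\phi)\in t(b)$, $V(\psi)\notin t(b)$ and $b\leq' a$, then $\mo{N},a\Vdash\phi\bito\psi$, so $V(\phi\bito\psi)\in u$. This needs the fact that $\leq^*$ agrees with inclusion for these prime filters. I would verify that directly, or bypass it by noting that $\leq^*$ is inclusion in the bounded-morphic image of $\mathfrak{pe}(\mo{M})$ (Proposition~\ref{prob}).
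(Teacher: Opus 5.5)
Your proposal is correct and is essentially the paper's proof. Your $t$ is the paper's $f$, your relations are its type-inclusion relations with the two models swapped, saturation of $\mo{N}$ supplies the witnesses in $\mo{N}$ and surjectivity onto $\mo{M}^{\delta}$, and $t(b)$ is the witness in $\mo{M}^{\delta}$. The differences are minor: you realise the full type of $u$ where its negative part already suffices, and in your (D$_5$), which the paper leaves implicit, the $\bito$ detour is unnecessary, since persistence gives $t(b)\subseteq t(a)\subseteq u$ and inclusion implies $\leq^*$ exactly as in the first half of Lemma~\ref{lem:modus-ponens}.
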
 

\begin{proof} Given $x \in W'$, let
\begin{center}
$f(x) = \{V(\phi): \phi \in \biLan \mathbin{\&} \mo{N}, x \Vdash \phi\}$.
\end{center}
It is not difficult to verify that $f(x)$ is indeed in the domain of $\mo{M}^{\delta}$.
Now define the following pair of relations:
\begin{align*}
x Z_1 y &\iff x \subseteq f(y),\\
x Z_2 y &\iff f(x) \subseteq y.
\end{align*}
Next we show that $\langle Z_1, Z_2 \rangle$ is a directed bisimulation
surjective with respect to both $\mo{N}$ and $\mo{M}^{\delta}$.

Assume that $x Z_1 y$, that is, $x \subseteq f(y)$. Now, $\mo{M}^{\delta}, x
\Vdash p$ implies that $V(p) \in x$, so by assumption, $V(p) \in  f(y)$ as
well. The latter means that  $\mo{N}, y \Vdash p$ by definition of $f$. On the
other hand if $x Z_2 y$, i. e.,  $f(x) \subseteq y$ and $\mo{N}, x \Vdash p$,
then $V(p) \in f(x)$. Consequently,  $V(p) \in y$, by our assumption, and that
means that  $\mo{M}^{\delta}, y \Vdash p$ as desired.

For the next condition, suppose that $x Z_1 y$, that is, $x \subseteq f(y)$ and
$y\leq' b$. Take $f(b)$. It is easy to see that $R^{\delta}xf(b)$. For assume
that $V(\phi) \rightarrow V(\psi) = V(\phi \rightarrow \psi) \in x$ while
$V(\phi) \in f(b)$. Then $\mo{N}, b \Vdash \phi$ whereas $V(\phi \rightarrow
\psi) \in f(y)$, which means that $\mo{N}, y \Vdash \phi \rightarrow \psi$, so
$\mo{N}, b \Vdash  \psi$ given that  $y\leq' b$. Thus, $V(\psi) \in f(b)$ as
desired. Moreover, $bZ_2f(b)$ while $f(b)Z_1b$. On the other hand assume that $x
Z_2 y$, that is,   $f(x) \subseteq y$ and $R^{\delta}yb$. Consider the set
$\Delta$ defined as 
$$
\{ ST_x(\phi) : \phi \in \biLan  \mathbin{\&} \mo{M}^{\delta}, b \Vdash \phi\}
\cup \{\neg
ST_x(\psi) : \psi \in \biLan \mathbin{\&} \mo{M}^{\delta}, b \nVdash \psi \}. 
$$
Take any finite $\Delta_0 \subseteq \Delta$. We may assume
$$
\Delta_0 = \{ST_x(\phi_0), \dots, ST_x(\phi_j)\} \cup \{\neg ST_x(\psi_0),
\dots, \neg ST_x(\psi_k)\}.
$$
Now, $\mo{M}^{\delta}, y \nVdash \bigwedge_{i < j+1} \phi_i \rightarrow
\bigvee_{i<k+1} \psi_i $, so $V(\bigwedge_{i < j+1} \phi_i \rightarrow
\bigvee_{i<k+1} \psi_i) \notin y$, and therefore
$V(\bigwedge_{i < j+1} \phi_i \rightarrow \bigvee_{i<k+1} \psi_i) \notin
f(x)$.
This means that $\mo{N}, x \nVdash \bigwedge_{i < j+1} \phi_i
\rightarrow \bigvee_{i<k+1} \psi_i$,  hence, there must be $b_0$ such that
$x\leq' b_0$ and $\mo{N}, b_0 \Vdash \bigwedge_{i < j+1} \phi_i $ while $\mo{N},
b_0 \nVdash \bigvee_{i<k+1} \psi_i $. By the $\omega$-saturation of $\mo{N}$ we
must have $b^{\prime}$ such that $x\leq' b^{\prime}$ while also $b^{\prime}$ 
satisfies
$\{ ST_x(\phi):  \phi \in \biLan \mathbin{\&}\mo{M}^{\delta}, b \Vdash \phi\}$
(that is, $\mo{M}^{\delta}, b \Rrightarrow_\biLan \mo{N}, b^{\prime}$) and
$b^{\prime}$ satisfies
$\{\neg ST_x(\psi):\psi\in\biLan\mathbin{\&}\mo{M}^{\delta},b \nVdash\psi\}$
(that is, $  \mo{N}, b^{\prime} \Rrightarrow_\biLan\mo{M}^{\delta}, b$ ).
Finally, $b \subseteq f(b^{\prime})$, that is $b Z_1 b^{\prime}$, since
$V(\phi) \in b$ implies that $\mo{M}^{\delta}, b \Vdash \phi$ which in turn
means that $\mo{N}, b^{\prime}\Vdash \phi$, so indeed
$V(\phi) \in f(b^{\prime})$. Similarly, $f(b^{\prime}) \subseteq b$,
that is $b^{\prime} Z_2b$, since
$\mo{N}, b^{\prime} \Rrightarrow_\biLan \mo{M}^{\delta}, b$.

If $xZ_iy$ and $b\leq_ix$ for some $b\in W_i$,   there is $b^{\prime}\in W_j$ such
that $b^{\prime}\leq_jy$, $bZ_i b^{\prime}$ and $b^{\prime}Z_jb$.

Now suppose that $x Z_1 y$, that is, $x \subseteq f(y)$ and
$b\leq x$ for some 
$b$ of $\mo{M}^\delta$. We must find a $b^{\prime}\in W'$ such that
$b^{\prime}\leq' y$, $bZ_1 b^{\prime}$ and $b^{\prime}Z_2b$.
Consider the set $\Delta$ defined as 
$$
\{ ST_x(\phi):\phi\in\biLan\mathbin{\&}\mo{M}^{\delta}, b\Vdash\phi\} \cup
\{\neg ST_x(\psi):\phi\in\biLan\mathbin{\&}\mo{M}^{\delta}, b \nVdash \psi\}.
$$
Take any finite $\Delta_0 \subseteq \Delta$. We may assume
$$
\Delta_0 = \{ST_x(\phi_0), \dots, ST_x(\phi_j)\} \cup
\{\neg ST_x(\psi_0), \dots , \neg ST_x(\psi_k)\}.
$$
Now, $\mo{M}^{\delta}, x \Vdash \bigwedge_{i < j+1} \phi_i \bito \bigvee_{i<k+1}
\psi_i $, so $V(\bigwedge_{i < j+1} \phi_i \bito \bigvee_{i<k+1} \psi_i) \in x$,
so $V(\bigwedge_{i < j+1} \phi_i \bito \bigvee_{i<k+1} \psi_i) \in f(y)$. The
latter means then that $\mo{N}, y \Vdash \bigwedge_{i < j+1} \phi_i \bito
\bigvee_{i<k+1} \psi_i$, hence, there must be $b_0$ such that $b_0\leq' y$ and
$\mo{N}, b_0 \Vdash \bigwedge_{i < j+1} \phi_i $ while
$\mo{N}, b_0 \nVdash\bigvee_{i<k+1} \psi_i$.
By the $\omega$-saturation of $\mo{N}$ we must have
$b^{\prime}$ such that $b^{\prime} \leq' y $ while also $b^{\prime}$ satisfies
$\{ST_x(\phi): , \phi \in \biLan\mathbin{\&}\mo{M}^{\delta}, b \Vdash \phi\}$
(that is, $\mo{M}^{\delta}, b \Rrightarrow_\biLan \mo{N}, b^{\prime}$)
and $b^{\prime}$ satisfies
$\{\neg ST_x(\psi):\psi\in\biLan\mathbin{\&} \mo{M}^{\delta}, b \nVdash \psi\}$
(that is, $\mo{N}, b^{\prime} \Rrightarrow_\biLan \mo{M}^{\delta}, b$).
Finally, $b \subseteq f(b^{\prime})$, that is, $b Z_1 b^{\prime}$ since
$V(\phi) \in b$ implies that $\mo{M}^{\delta}, b \Vdash \phi$ which means that $
\mo{N}, b^{\prime}\Vdash \phi$, so indeed $V(\phi) \in f (b^{\prime})$, and
$f(b^{\prime}) \subseteq b$, that is, $b^{\prime} Z_2b$ similarly since $  \mo{N},
b^{\prime} \Rrightarrow_\biLan \mo{M}^{\delta}, b$.

Finally, we show that $\langle Z_1, Z_2 \rangle$ is surjective with respect to
both $\mo{N}$ and $\mo{M}^{\delta}$. Assuming first that $x \in W'$, we have 
that $f(x) Z_1 x$ trivially. On the other hand, if $x$ is a world
in $ \mo{M}^{\delta}$ then it suffices to find $y \in \mo{N}$ such that
$\mo{M}^{\delta}, x \nVdash \phi$ implies that $\mo{N}, y \nVdash \phi$ for all
formulas $\phi$ of $\biLan$. This will show that $y Z_2 x$.
Consider the
set $ \{\neg ST_x(\phi):\phi \in \biLan\mathbin{\&} M^{\delta}, x \nVdash \phi\}$
and take some finite subset $\{\neg ST_x(\phi_0), \dots , \neg ST_x(\phi_n)\}$.
We know that $\mo{M}^{\delta}, x \nVdash \bigvee_{j < n+1}\phi_j $, so
$\mo{M}^{\delta}\nVdash \bigvee_{j < n+1}\phi_j $, so
$\mo{M} \nVdash \bigvee_{j < n+1}\phi_j$ and by the hypothesis of the
proposition, $\mo{N} \nVdash \bigvee_{j < n+1}\phi_j$. Consequently, there is
$z \in \mo{N}$ such that $\mo{N}, z \nVdash \bigvee_{j < n+1}\phi_j$, so $z$
satisfies $\{\neg ST_x(\phi_0), \dots , \neg ST_x(\phi_n)\}$. By the
$\omega$-saturation of $\mo{N}$, there must be $y$ satisfying
$ \{\neg ST_x(\phi):\phi\in\biLan\mathbin{\&}\mo{M}^{\delta},x\nVdash\phi\}$
as desired.
 \end{proof}

\begin{lemma}\label{ul}  Let K be a class of  Kripke models for
bi-intuitionistic logic. If K is closed under surjective  directed
bisimulations and both K and $\overline{K}$ are closed under definable prime filter
extensions, then both  K and its complement  are closed under ultrapowers.
\end{lemma}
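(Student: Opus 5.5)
The plan is to pass through an $\omega$-saturated model and the definable prime filter extensions, using Proposition~\ref{tdb} twice. Fix $\mo{M}$ and an ultrapower $\mo{N} = \mo{M}^I/U$. We will show $\mo{M} \in K$ iff $\mo{N} \in K$. Both halves of the lemma follow from this: it is closure of $K$ under ultrapowers, and, by contraposition, closure of $\overline{K}$.

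First, $\mo{M}$ and $\mo{N}$ validate the same $\biLan$-formulas. By Proposition~\ref{pro:tr}, $\mo{M} \Vdash \phi$ iff $\mo{M} \vDash \forall x\, ST_x(\phi)$, and this first-order sentence transfers by \L o\'s's theorem. Next, by Proposition~\ref{prop:om-satur-ext}, take an $\omega$-saturated elementary extension $\mo{N}^*$ of $\mo{N}$. The same sentences $\forall x\,ST_x(\phi)$ transfer along it, so $\mo{N}^*$, $\mo{N}$ and $\mo{M}$ all validate exactly the same $\biLan$-formulas. We pass to $\mo{N}^*$ because an ultrapower over an arbitrary index set need not itself be $\omega$-saturated, whereas Proposition~\ref{tdb} needs saturation on one side.

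Now apply Proposition~\ref{tdb} to the pair $(\mo{M},\mo{N}^*)$, and separately to the pair $(\mo{N},\mo{N}^*)$. This yields directed bisimulations between $\mo{N}^*$ and $\mo{M}^\delta$, and between $\mo{N}^*$ and $\mo{N}^\delta$. Each is surjective with respect to both of its models, so closure of $K$ under surjective directed bisimulations can be used in either direction.

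Suppose $\mo{M}\in K$. Closure of $K$ under definable prime filter extensions gives $\mo{M}^\delta\in K$. The first bisimulation then gives $\mo{N}^*\in K$, and the second gives $\mo{N}^\delta\in K$. If $\mo{N}\notin K$, then closure of $\overline{K}$ under definable prime filter extensions would give $\mo{N}^\delta\notin K$, a contradiction; hence $\mo{N}\in K$.

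The converse runs the same chain backwards: $\mo{N}\in K \Rightarrow \mo{N}^\delta\in K \Rightarrow \mo{N}^*\in K \Rightarrow \mo{M}^\delta\in K$. Then $\mo{M}\in K$, again by closure of $\overline{K}$ under definable prime filter extensions.

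The step needing most care is the bookkeeping of directions. Closure under surjective directed bisimulations only lets us move from a model to one onto which the bisimulation is surjective. The argument therefore relies on the fact that Proposition~\ref{tdb} provides surjectivity with respect to \emph{both} models. It also relies on both $K$ and $\overline{K}$ being closed under $(\cdot)^\delta$, since this is what lets us recover $\mo{N}$ (or $\mo{M}$) from its definable prime filter extension.
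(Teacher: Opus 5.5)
Your proof is correct and follows essentially the paper's route. You pass to an $\omega$-saturated extension and apply Proposition~\ref{tdb} twice, once against $\mo{M}^\delta$ and once against the definable prime filter extension of the ultrapower, then use that both $K$ and $\overline{K}$ are closed under $(\cdot)^\delta$ and that the bisimulations are surjective in both directions. The only difference is cosmetic: you obtain the $\overline{K}$ half by running the same chain backwards to prove a biconditional, whereas the paper derives it from the already established closure of $K$ under ultrapowers, noting that the saturated model is itself an ultrapower of $\prod\mo{M}/U$.
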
 

\begin{proof}
First suppose $\mo{M} \in K$ and $\prod \mo{M}/U$ is an ultrapower
of $\mo{M}$. Consider next an ultrapower $\mo{N}=\prod (\prod \mo{M}/U)/D$
obtained by the Keisler method which is $\omega$-saturated. Applying {\L}o\'s
theorem twice, we get that $\mo{M} \Vdash \phi$ if and only if
$\mo{N}\Vdash \phi$. Thus, using Proposition \ref{tdb}, there is a
surjective (with respect to $\mo{N}$) directed bisimulation between
$\mo{M}^{\delta}$ and $\mo{N}$.  

Now $\mo{M}^{\delta} \in K$ by the closure of $K$ under definable prime filter
extensions. By the closure under surjective
directed bisimulations, we also see that $\mo{N} \in K$. Again applying
Proposition \ref{tdb} with $\mo{N}$ and $\prod \mo{M}/U$ this time, we get
that there is a directed bisimulation surjective with respect to $\prod \mo{M}/U$
between $\mo{N}$ and $(\prod \mo{M}/U)^{\delta}$, so indeed the latter is in $K$
and since $\overline{K}$ is closed under definable prime filter extensions, we must
have that $\prod \mo{M}/U \in K$, as desired.

On the other hand if $\mo{M} \in \overline{K}$, since   also $\mo{M}^{\delta}
\in \overline{K}$ then, by  Proposition \ref{tdb}, we have that $\prod (\prod
\mo{M}/U)/D \in \overline{K}$ by the closure of $K$ under surjective   directed
bisimulations. Moreover, $\prod \mo{M}/U \in \overline{K}$ by the closure of $K$
under ultrapowers established above.
 \end{proof}

\section{Classes of models axiomatisable in $\biInt$}\label{sec:axiomatisable-models} 

In this section we are going to characterise classes of models
\emph{axiomatisable} by theories in  $\biLan$.
The exact meaning of axiomatisability that we employ here will be clarified in
the following definition, where, as usual, for a set $\Theta\subseteq\biLan$, we
write $\mbox{Mod}(\Theta)$ for
$\{ \mo{M}\mid \mo{M} \Vdash \Theta\}$.

\begin{definition}\label{def:axiomatisable-class}
  A class K of Kripke models is said to be
  \emph{axiomatisable} or \emph{definable}  in $\biLan$ if there is a set
  $\Theta$ of formulas of $\biLan$ such that $\mbox{Mod}(\Theta) = K$.  
\end{definition}

\begin{theorem}\label{axmod} Let K be a class of Kripke models for
  bi-intuitionistic logic. Then the following are equivalent: 
\begin{enumerate}
\item $K$ is bi-intuitionistically definable, that is, $K = \mbox{Mod}(\Theta)$
  for some collection $\Theta$ of formulas of $\biLan$. 
\item K is closed under surjective directed bisimulations and disjoint unions
  while both K and its complement $\overline{K}$ are closed under prime filter
  extensions. 
\item K is closed under surjective  directed bisimulations and disjoint unions
  while both K and its complement $\overline{K}$ are closed under definable
  extensions. 
\item K is closed under surjective  directed bisimulations and disjoint unions
  while both K and its complement $\overline{K}$ are closed under ultrapowers. 
\end{enumerate}
\end{theorem}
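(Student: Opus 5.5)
We prove the cycle $(1)\Rightarrow(2)\Rightarrow(3)\Rightarrow(4)\Rightarrow(1)$.

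\emph{$(1)\Rightarrow(2)$.} Suppose $K=\mbox{Mod}(\Theta)$. Closure under surjective directed bisimulations is Proposition~\ref{rdbp}, and closure under disjoint unions is Proposition~\ref{prop:truth-pres}(1). By Proposition~\ref{inpe}(3), $\mo{M}\Vdash\phi$ iff $\mathfrak{pe}(\mo{M})\Vdash\phi$. Hence membership in $K$ is both preserved and reflected by passing to $\mathfrak{pe}(\mo{M})$, so both $K$ and $\overline{K}$ are closed.

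\emph{$(2)\Rightarrow(3)$.} This is immediate from Corollary~\ref{clo}: under closure by surjective directed bisimulations, $\mathfrak{pe}(\mo{M})\in K$ iff $\mo{M}^\delta\in K$, and likewise for $\overline{K}$.

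\emph{$(3)\Rightarrow(4)$.} This is exactly Lemma~\ref{ul}.

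\emph{$(4)\Rightarrow(1)$.} This is the substantive direction. Let $\Theta$ be the set of $\biLan$-formulas valid in every member of $K$. Clearly $K\subseteq\mbox{Mod}(\Theta)$, so take $\mo{M}\Vdash\Theta$; we must show $\mo{M}\in K$. By Lemma~\ref{gold}, $K$ is closed under ultraproducts, since closure under inner submodels follows from closure under surjective directed bisimulations. Lemma~\ref{comp} then gives compactness relative to $K$.

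By Lemma~\ref{disjoint} it suffices to show $\mo{M}_x\in K$ for each $x$, because $\mo{M}$ is a surjective bounded-morphic image of $\coprod_x\mo{M}_x$. Fix $x$ and write $\mo{M}'=\mo{M}_x$, which still validates $\Theta$ because it is an inner submodel. Every point $w$ of $\mo{M}'$ and every finite set of formulas false at $w$ give a disjunction $\psi$ with $\mo{M}'\nVdash\psi$, so $\psi\notin\Theta$ and hence some member of $K$ refutes $\psi$.

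We now use compactness to build a single model $\mo{N}\in K$ that refutes everything $\mo{M}'$ refutes. Add a constant $c_w$ for each $w\in W_x$ and consider
\[
\Sigma=\{\neg ST_{c_w}(\psi): \mo{M}',w\nVdash\psi\}.
\]
A finite subset mentions finitely many points $w_1,\dots,w_k$. Refuting these simultaneously in a single model requires care. We take a disjoint union of $k$ members of $K$, the $i$-th refuting the finite disjunction associated with $w_i$; this union is in $K$ by closure under disjoint unions. So $\Sigma$ is finitely satisfiable in $K$, and by Lemma~\ref{comp} it is satisfiable in some model of $K$. Replacing that model by an $\omega$-saturated ultrapower, using closure of $K$ under ultrapowers, we obtain an $\omega$-saturated $\mo{N}\in K$ with points $n_w$ satisfying $\biItp_{\mo{N}}(n_w)\subseteq\biItp_{\mo{M}'}(w)$.

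Next we apply Proposition~\ref{pro:hm} to $\mo{N}$ and an $\omega$-saturated ultrapower $\mo{M}^*$ of $\mo{M}'$. The pair $\langle Z_1,Z_2\rangle$ defined there is a directed bisimulation, and it is surjective onto the image of $\mo{M}'$ in $\mo{M}^*$, since $n_wZ_1w$. It need not be surjective onto all of $\mo{M}^*$.

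\textbf{Main obstacle.} The bisimulation must reach every point of $\mo{M}^*$, and $\mo{M}^*\in K$ must be transferred back to $\mo{M}'\in K$ via $\overline{K}$'s closure under ultrapowers. For the first part, I would run the compactness argument directly over $\mo{M}^*$, that is, name all points of $\mo{M}^*$, which also validates $\Theta$ by {\L}o\'s; it may be cleaner to do this from the start. Then the image of $Z_1$ is all of $\mo{M}^*$, so $\mo{M}^*\in K$ by closure under surjective directed bisimulations. For the second part: if $\mo{M}'\notin K$, then its ultrapower $\mo{M}^*\in\overline{K}$, a contradiction. Hence $\mo{M}'\in K$ for every $x$, and therefore $\mo{M}\in K$.
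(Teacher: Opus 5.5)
Your proof is correct. For $(1)\Rightarrow(2)\Rightarrow(3)\Rightarrow(4)$ it coincides with the paper's, but in $(4)\Rightarrow(1)$ you take a different route at the key surjectivity step.

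The paper reduces to $\mo{M}_x$ and realises in some $\mo{N}\in K$ only the \emph{full} type (positive and negative) of the single root $x$. It then passes to $\omega$-saturated ultrapowers. Finally, leaving the details to the reader, it asserts that the bisimulation of Proposition~\ref{pro:hm} is surjective onto $\mo{M}_x$: every point of $\mo{M}_x$ is reached from $x$ by a finite zigzag path, and the back-and-forth clauses propagate the two-way match along it. This is why both halves of the type of $x$ are needed there.

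You instead name every point of an $\omega$-saturated ultrapower $\mo{M}^*$. You realise only their \emph{negative} types, all at once, in a single member of $K$, getting finite satisfiability from finite disjoint unions and then applying Lemma~\ref{comp}. Surjectivity is then immediate from $n_w Z_1 w$. You also descend from $\mo{M}^*$ back to $\mo{M}'$ explicitly, using closure of $\overline{K}$ under ultrapowers.

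Your route is the more robust one. Zigzag-connectedness from $x$ is not first-order and is generally lost in the saturated ultrapower. For an infinite fence $a_0<b_0>a_1<b_1>\cdots$, the ultrapower contains points at infinite zigzag distance from $a_0$. So the paper's ``we may assume $\mo{M}_x$ is $\omega$-saturated'' sits uneasily with its connectivity-based surjectivity argument. The paper also never spells out the return trip via $\overline{K}$, which you handle explicitly.

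As you observe, your argument also makes the reduction to $\mo{M}_x$ and Lemma~\ref{disjoint} unnecessary. The paper's approach, where it applies, economises by matching only one type per model, at the cost of depending on the component structure of the model.
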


\begin{proof} 
$(1) \Rightarrow (2)$: Closure under surjective  directed bisimulations comes
from Corollary \ref{rdbp}. Closure under disjoint unions  is a consequence of
previous results. Finally, the remaining closure properties follow from
Proposition \ref{inpe}. 

$(2) \Rightarrow (3)$: By Corollary \ref{clo}, we see that if $K$ is closed
under prime filter extensions, then it is also closed under definable
extensions.  

$(3) \Rightarrow (4)$: By Lemma \ref{ul}. 

$(4) \Rightarrow (1)$: Suppose that $K$ and $\overline{K}$ are closed as indicated. Observe that $K$ is closed under ultraproducts according to  Lemma~\ref{gold}. Let
$$
\mathrm{Th}(K)= \{\phi \in \biLan:
\mo{M} \Vdash \phi, \ \mbox{for all} \ \mo{M}\in K\}.
$$ 
All we need to do is show that $K =  \mbox{Mod}(\mbox{Th}(K))$. The direction $K
\subseteq  \mbox{Mod}(\mbox{Th}(K))$ is obvious, so let $\mo{M}= (W, \leq, V)
\in \mbox{Mod}(\mbox{Th}(K))$ to establish that $\mo{M} \in K$.
Observe that since there is directed bisimulation surjective with respect to
$\mo{M}_x$, by Proposition~\ref{rdbp} we have that $\mo{M} \in \mbox{Mod}(\mbox{Th}(K))$
only if $\mo{M}_x \in \mbox{Mod}(\mbox{Th}(K))$), so    
by Lemma~\ref{disjoint}, we may assume that $\mo{M}$ is of the form $\mo{M}_x$.

Next, consider the following set $\Delta$ of  formulas of  $\biLan^{corr +}$:
$$
\{ ST_x(\phi):\phi \in \biLan\mathbin{\&} \mo{M}, x \Vdash \phi\} \cup
\{\neg ST_x(\phi):\phi \in \biLan\mathbin{\&}\mo{M}, x \nVdash \phi\}.
$$
We will show that $\Delta$ is finitely satisfiable in $K$, which, together with the
assumption that  $K$ is closed under ultraproducts, will yield, 
by appealing to Lemma~\ref{comp}, that indeed $\Delta$ is satisfiable in $K$.
To this end, take any finite $\Delta_0 \subseteq \Delta$. Without loss of generality, we
may assume that 
$$
\Delta_0 = \{ ST_x(\phi_1), \dots, ST_x(\phi_n), \neg ST_x(\psi_1), \dots,  \neg
ST_x(\psi_m)\}.
$$
But then $\mo{M}_x,x \nVdash \bigwedge_{i \leq n} \phi_i \rightarrow \bigvee_{i
  \leq m} \psi_i $, so $ \bigwedge_{i \leq n} \phi_i \rightarrow \bigvee_{i \leq
  m} \psi_i \notin \mbox{Th}(K)$ since $\mo{M}_x \in \mbox{Mod}(\mbox{Th}(K))$.
Therefore, there must be $\mo{M}'\in K$ such that
$\mo{M} \nVdash \bigwedge_{i \leq n} \phi_i \rightarrow \bigvee_{i \leq m} \psi_i $. Thus 
$\Delta $ is finitely satisfiable in $K$, and so $\Delta$ is satisfiable in $K$
by some point $y$ in a  model $\mo{N}= (W', \leq', V')$.
Hence, $\mo{N}, y \equiv_\biLan \mo{M}_x, x$.

Now we may assume that both $\mo{N}$ and $ \mo{M}_x$ are $\omega$-saturated
since if we take $\omega$-saturated ultrapowers  $\mo{N}'$ and $ \mo{M}_x'$ of
$\mo{N}$ and $ \mo{M}_x$ respectively, we have that $\mo{N}'\in K$ (by closure
under ultrapowers) and $ \mo{M}_x' \in \mbox{Mod}(\mbox{Th}(K))$
(since $\mo{M}_x'$ is an elementary extension of $ \mo{M}_x$).
By Proposition \ref{pro:hm}, we have that $\langle Z_1, Z_2 \rangle$ is a
directed bisimulation between $\mo{M}_x$  and $\mo{N}$, defined as follows:  
\begin{align*}
xZ_1 y &\iff \biItp_{ \mo{M}_x}(x) \subseteq \biItp_{\mo{N}}(y),\\
xZ_2 y &\iff   \biItp_{\mo{N}}(x) \subseteq \biItp_{ \mo{M}_x}(y).
\end{align*}

All that remains  to do is show that $\langle Z_1, Z_2 \rangle$  is surjective
with respect to  $\mo{M}_x$. The rest of the proof is laborious but not
particularly difficult. We leave the details to the reader. It suffices to use
the definition of  the domain of $\mo{M}_x$ to check that the relation is
surjective. 
\end{proof}

\section{Goldblatt-Thomason theorem}\label{sec:Goldblatt-Thomason}

We end by proving a bi-intuitionistic analogue of the Goldblatt-Thomason Theorem.
Apart from the result itself, this section also serves an an illustration
of algebraic methods. Most of the preliminary results have already been stated,
we only use one additional lemma.

\begin{lemma}\label{lem:satur}
Let $\mo{F}$ be a Kripke frame. There exists an ultrapower $\mo{F}^I\!/U$ such that
$\mathfrak{pe}(\mo{F})$ is a bounded-morphic image of\/ $\mo{F}^I\!/U$. 
\end{lemma}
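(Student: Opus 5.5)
The plan follows the standard modal-logic argument (as in Blackburn–de Rijke–Venema for ultrafilter extensions). The idea is to expand the frame to a model that names every upset, take an $\omega$-saturated ultrapower, and map each point to its "type" of named upsets.

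\textbf{Step 1: naming the upsets.} Let $\mo{F}=(W,\leq)$. Introduce a fresh proposition letter $p_X$ for every upset $X\in\mathrm{Up}(W)$; this is the change of $\Prop$ announced after the definition of $\biLan$. Let $\mo{M}=(\mo{F},V)$ with $V(p_X)=X$. By Proposition~\ref{prop:om-satur-ext}, $\mo{M}$ has an $\omega$-saturated ultrapower $\mo{M}^I/U$. Its underlying frame is $\mo{F}^I/U$, since ultrapowers commute with reducts.

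\textbf{Step 2: the map.} Write $\mo{M}^I/U=(W^*,\leq^*,V^*)$. Define $g(a)=\{X\in\mathrm{Up}(W): a\in V^*(p_X)\}$.

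\emph{$g(a)$ is a prime filter of $\mathrm{Up}(W)$.} Each $V^*(p_X)$ is first-order definable by the atom $P_X$. The facts $V(p_{X\cap Y})=V(p_X)\cap V(p_Y)$, $V(p_{X\cup Y})=V(p_X)\cup V(p_Y)$, $V(p_W)=W$ and $V(p_\emptyset)=\emptyset$ are first-order sentences true in $\mo{M}$. By Łoś they transfer to the ultrapower. So $g(a)$ is a proper prime filter.

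\emph{$g$ is order-preserving.} Each $V^*(p_X)$ is an upset, since persistence is first-order.

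\emph{(ub).} Suppose $g(a)\subseteq v$ for a prime filter $v$. We want $b\geq^* a$ with $g(b)=v$. Consider the type
$$\{a\leq^* y\}\cup\{P_X(y):X\in v\}\cup\{\neg P_Y(y):Y\notin v\},$$
with parameter $a$. By $\omega$-saturation it suffices to check finite satisfiability. Take $X_1,\dots,X_n\in v$ and $Y_1,\dots,Y_m\notin v$. Put $X=\bigcap X_i\in v$ and $Y=\bigcup Y_j\notin v$. Since $v$ is a filter, $X\to Y\notin v$ (otherwise $X\cap(X\to Y)\subseteq Y$ by (\dag) gives $Y\in v$). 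Because $g(a)\subseteq v$, also $X\to Y\notin g(a)$, i.e.\ $a\notin V^*(p_{X\to Y})$. The sentence
$$\forall x\,\bigl(\neg P_{X\to Y}x\to\exists y\,(x\leq y\wedge P_Xy\wedge\neg P_Yy)\bigr)$$
holds in $\mo{M}$ by the definition of $\to$. By Łoś it holds in the ultrapower, which yields the required witness. Expanding $P_X,P_Y$ into the finitely many $P_{X_i},P_{Y_j}$ uses the transferred identities from above.

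\emph{(lb).} This is dual. Suppose $v\subseteq g(a)$. For $X\in v$ and $Y\notin v$, we have $X\bito Y\in v$ (otherwise $X\subseteq Y\cup(X\bito Y)$ and primeness would give $Y\in v$). Hence $X\bito Y\in g(a)$. Then apply the transferred sentence for $\bito$ and saturation.

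\emph{Surjectivity.} Given $v$, the type $\{P_X(y):X\in v\}\cup\{\neg P_Y(y):Y\notin v\}$ is finitely satisfiable in $\mo{M}$: we have $\bigcap X_i\not\subseteq\bigcup Y_j$ since $v$ is prime. The ultrapower is elementarily equivalent to $\mo{M}$, so the type is finitely satisfiable there too, and saturation realises it.

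\textbf{Step 3: conclusion.} The order on $\mathfrak{pe}(\mo{F})$ is inclusion, by Lemma~\ref{lem:modus-ponens}. Thus $g$ is a surjective bounded morphism from $\mo{F}^I/U$ onto $\mathfrak{pe}(\mo{F})$.

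\textbf{Main obstacle: saturation.} The language has $|\mathrm{Up}(W)|$ many predicates, and the types involve all of them. Plain $\omega$-saturation only handles types over finitely many parameters, which is fine here: only the single parameter $a$ appears, and types in any language are allowed. Still, one must make sure that Proposition~\ref{prop:om-satur-ext} applies to this enlarged language. I would invoke it via the Keisler countably incomplete ultrafilter construction, which yields $\omega_1$-saturation, hence $\omega$-saturation, for any language.
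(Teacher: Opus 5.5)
Your argument is the paper's: name every upset by a predicate, pass to an $\omega$-saturated ultrapower, and send each point to the set of named upsets it satisfies. Your verification is more explicit than the paper's sketch. You secure $a\leq^* b$ in (ub) and $b\leq^* a$ in (lb) by transferring the first-order definitions of $X\to Y$ and $X\bito Y$, which the paper leaves implicit, and you prove surjectivity separately. Steps 1--3 are correct provided you simply cite Proposition~\ref{prop:om-satur-ext}, which the paper states for an arbitrary set of proposition letters.

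The justification in your last paragraph, however, is wrong and should be replaced. A countably incomplete ultrafilter yields $\omega_1$-saturated ultrapowers only for \emph{countable} languages. Here $\omega$-saturation requires realising parameter-free or one-parameter types with $|\mathrm{Up}(W)|$ many formulas, and with such an ultrafilter this genuinely fails.

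The lemma itself gives a counterexample. Take the discrete poset $\mo{F}=(\omega,=)$. Every subset is an upset, so $\mathfrak{pe}(\mo{F})$ consists of all ultrafilters on $\omega$ and has size $2^{2^{\aleph_0}}$. Any ultrapower of $\mo{F}$ indexed by $\omega$ has at most $2^{\aleph_0}$ points, so it cannot be mapped onto $\mathfrak{pe}(\mo{F})$.

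The ultrafilter must be chosen according to $\kappa=|\mathrm{Up}(W)|+\aleph_0$. Take a countably incomplete $\kappa^+$-good ultrafilter on a set of size $\kappa$; the ultrapower in the enlarged language is then $\kappa^+$-saturated, hence $\omega$-saturated. This good-ultrafilter result from Chang--Keisler is exactly what Proposition~\ref{prop:om-satur-ext} rests on, so cite that instead of countable incompleteness.
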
  

\begin{proof} We only sketch a proof. 
Expand the first-order language of $\mo{F}$ by adding as many unary predicates $P_j$
as there are upsets of $\mo{F}$. Let $\widehat{\mo{F}}$ be $\mo{F}$ expanded by
interpreting each $P_j$ as a distinct upset of $\mo{F}$.
By Proposition~\ref{prop:om-satur-ext}
there exists an ultrapower $\widehat{\mo{F}}^I\!/U$
which is $\omega$-saturated. Now expanding (if
necessary) $\Prop$ to $\Prop^+$ so that there is a distinct
propositional variable
$p_j$ for each $P_j$, and reinterpreting $P_j$ as
the value of $p_j$, we have that $\widehat{\mo{F}}^I\!/U$
realises all sets of $\biLan(\Prop^+)$ formulas maximally consistent over $\mo{F}$.  
Define a map $f\colon \widehat{\mo{F}}^I\!/U \to \mathfrak{pe}(\mo{F})$ putting
$$
f(a) = \{P_j\in \mathrm{Up}(\mo{F}): \widehat{\mo{F}}^I\!/U\models P_j(a)\} 
$$
This map is order preserving, for if $a\leq b$, then since $P_j$ are upsets, we have
$f(a)\subseteq f(b)$. It also satisfies the bounded morphism conditions. For
suppose $f(a) \subseteq Z$, then since $Z$ is a prime-filter of upsets of $\mo{F}$, we
have $Z = \{P_j^F: j\in J, \text{ for some } J\}$, that is, $Z$ is a type.
Moreover, since $Z$ is a prime filter, $Z$ is maximally consistent and hence, by
saturation, $Z = f(b)$ for some $b$.
This proves the upward bounded-morphism condition, the proof of
the downward condition is analogous. The map $f$ does not depend on the language
expansion, so in fact it is a bounded morphism from
$\mo{F}^I\!/U$ to $\mathfrak{pe}(\mo{F})$.
\end{proof}

\begin{theorem}\label{thm:GT}
Let $K$ be a class of Kripke frames closed under ultrapowers. Then $K$ is
bi-intuitionistically definable if and only if $K$ is closed under
generated subframes, bounded-morphic images and disjoint unions, and reflects
prime-filter extensions.
\end{theorem}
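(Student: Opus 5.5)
We follow the algebraic route of the classical Goldblatt--Thomason argument, with Propositions~\ref{prop:min-dual-frm}, \ref{prop:min-dual-alg}, \ref{prop:algebraizability} and Lemma~\ref{lem:satur} as the main tools.

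\emph{Necessity.} Suppose $K=\{\mo{F}:\mo{F}\Vdash\Theta\}$. Closure under generated subframes, bounded-morphic images and disjoint unions is Proposition~\ref{prop:truth-pres}(2),(4),(6). For reflection of prime filter extensions, assume $\mathfrak{pe}(\mo{F})\in K$ and let $V$ be any valuation on $\mo{F}$. The model $\mathfrak{pe}(\mo{F},V)$ is based on the frame $\mathfrak{pe}(\mo{F})$, since the frame is $(\mo{F}^+)_+$ whatever the valuation. Hence $\mathfrak{pe}(\mo{F},V)\Vdash\Theta$, and Proposition~\ref{inpe}(3) gives $(\mo{F},V)\Vdash\Theta$. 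As $V$ was arbitrary, $\mo{F}\in K$.

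\emph{Sufficiency.} Let $\Theta=\mathrm{Th}(K)$ be the set of formulas valid on every frame in $K$, and take a frame $\mo{G}$ with $\mo{G}\Vdash\Theta$. The goal is $\mo{G}\in K$.

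First we translate validity into algebra. A formula $\phi$ is valid on $\mo{F}$ iff the equation $\phi=1$ holds in $\mo{F}^+$, since valuations on $\mo{F}$ are exactly assignments into $\mathrm{Up}(\mo{F})$. So $\mo{G}^+$ lies in the variety $\mathbf{V}(\{\mo{F}^+:\mo{F}\in K\})$, using Proposition~\ref{prop:algebraizability} to pass between equations and formulas. By Birkhoff's HSP theorem, $\mo{G}^+\in HSP(\{\mo{F}^+:\mo{F}\in K\})$. Concretely, there are $\mo{F}_i\in K$, a subalgebra $\mathbf{B}\le\prod_i\mo{F}_i^+$, and a surjective homomorphism $\mathbf{B}\to\mo{G}^+$.

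Next we dualize, using Proposition~\ref{prop:min-dual-frm}(1) to identify $\prod_i\mo{F}_i^+$ with $(\coprod_i\mo{F}_i)^+$. Put $\mo{H}=\coprod_i\mo{F}_i$; then $\mo{H}\in K$ by closure under disjoint unions. Proposition~\ref{prop:min-dual-alg}(3) shows that $\mathbf{B}_+$ is a bounded-morphic image of $(\mo{H}^+)_+=\mathfrak{pe}(\mo{H})$. Proposition~\ref{prop:min-dual-alg}(2) shows that $(\mo{G}^+)_+=\mathfrak{pe}(\mo{G})$ is a generated subframe of $\mathbf{B}_+$.

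It remains to get $\mathfrak{pe}(\mo{H})$ into $K$, and this is where Lemma~\ref{lem:satur} enters. It yields an ultrapower $\mo{H}^I/U$ having $\mathfrak{pe}(\mo{H})$ as a bounded-morphic image. Now $\mo{H}^I/U\in K$ by the ultrapower hypothesis, so $\mathfrak{pe}(\mo{H})\in K$. By closure under bounded-morphic images, $\mathbf{B}_+\in K$. By closure under generated subframes, $\mathfrak{pe}(\mo{G})\in K$. Reflection of prime filter extensions then gives $\mo{G}\in K$.

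The main obstacle is the exact form of the duality facts (Propositions~\ref{prop:min-dual-frm} and \ref{prop:min-dual-alg}), in particular that $\mathfrak{pe}$ of a frame coincides with the prime filter frame of its full upset algebra, and that homomorphic images and subalgebras dualize in the stated directions. Once these are granted, the only genuinely model-theoretic input is Lemma~\ref{lem:satur}, which replaces the usual ``ultrafilter extension is a bounded-morphic image of a saturated ultrapower'' step. If the HSP step feels too coarse, we would instead use $\mathbf{B}=$ the subalgebra generated by the images of the upsets of $\mo{G}$ under the separating maps into $\prod\mo{F}_i^+$. Each non-valid formula on $\mo{F}_i$ yields a coordinate witnessing failure, and this is the standard concrete construction giving $\mo{G}^+\in HSP$.
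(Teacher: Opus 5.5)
Your proposal is correct and is essentially the paper's own proof. It runs $\mo{G}^+\in HSP\{\mo{F}^+:\mo{F}\in K\}$ via algebraizability, then $\prod_i\mo{F}_i^+\cong(\coprod_i\mo{F}_i)^+$, then dualizes so that $\mathfrak{pe}(\mo{G})$ is a generated subframe of a bounded-morphic image of $\mathfrak{pe}(\mo{H})$ (with the duality directions used exactly as in the paper's argument), and finally applies Lemma~\ref{lem:satur} with closure under ultrapowers and bounded-morphic images to get $\mathfrak{pe}(\mo{H})\in K$; your necessity argument only makes explicit, by quantifying over valuations, the one-line appeal to Proposition~\ref{inpe}(3).
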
  

\begin{proof}
Assume $K$ is bi-intuitionistically definable.  Closure under generated
subframes, bounded-morphic images and 
disjoint unions follows immediately from Pro\-position~\ref{prop:truth-pres}.
By Proposition~\ref{inpe}(3) it also follows that $K$ reflects prime-filter 
extensions.

For the converse, assume $K$ is an elementary class closed under 
generated subframes, bounded-morphic images and disjoint unions, and reflecting
prime-filter extensions. Let $\Sigma(K)$ be the set of all
formulas valid in $K$. Take any frame $\mo{F}$ such that
$\mo{F}\Vdash\Sigma(K)$. To prove the theorem it suffices to show that $\mo{F}$
belongs to $K$. 

By Proposition~\ref{prop:algebraizability} we have
that $\mo{F}^+\models \{\varphi= 1: \varphi \in\Sigma(K)\}$.
By subdirect representation theorem,
$\mo{F}^+\in HSP\{\mo{P}^+: \mo{P}\in K\}$. Put
$\mathbf{A} = \mo{F}^+$, and $\mo{P} = \coprod_{i\in I} \mo{P}_i$.
Hence, there is a set
$\{\mo{P}_i: i\in I\}$ of frames from $K$ such that
$\mathbf{A}$ is a homomorphic image of an algebra
$\mathbf{B}$ which is a subalgebra of
$\mathbf{C} = \prod_{i\in I}\mo{P}_i^+$. By Proposition~\ref{prop:min-dual-frm}(1), 
$$
\mathbf{C} = \prod_{i\in I}\mo{P}_i^+\cong
\left(\coprod_{i\in I} \mo{P}_i\right)^+ = \mo{P}^+
$$
and since $K$ is closed under disjoint unions, we have
$\mo{P}\in K$. Using Proposition~~\ref{prop:min-dual-alg}(2,3), we obtain
the following: (i) $\mathbf{A}_+$ is a generated subframe of
$\mathbf{B}_+$, (ii) $\mathbf{B}_+$ is a bounded-morphic image of
$\mathbf{C}_+ \cong \left(\mo{P}^+\right)_+ = \mathfrak{pe}(\mo{P})$. 
Since $K$ is closed under ultrapowers, by Lemma~\ref{lem:satur} we get
that $\mathfrak{pe}(\mo{P})\in K$.
But then, $\mo{P}\in K$, so by closure properties of $K$ 
we get that $\mathbf{B}_+$ and $\mathbf{A}_+$ are in $K$.
Finally, since $\mathbf{A}_+ = (\mo{F}^+)_+ \cong \mathfrak{pe}(\mo{F})$, and
$K$ reflects prime-filter extensions, $\mo{F}\in K$, as required.
\end{proof}

\section{Conclusion}\label{con}

In this paper we have settled two questions. On the one hand, we have provided a
characterization of the classes of model in bi-intuitionistic logic that can be
axiomatised by a theory in the language of the logic (Theorem~\ref{axmod}). On
the other hand, we have obtained an analogous result for the classes of frames
closed under ultrapowers for which a similar thing can be done
(Theorem~\ref{thm:GT}).  These two new results contribute to building a fuller
picture of the expressivity of propositional bi-intuitionistic logic, adding to
the results in~\cite{badia,olk}.

\end{document}